\tikzset{
	symbol/.style={
		draw=none,
		every to/.append style={
			edge node={node [sloped, allow upside down, auto=false]{$#1$}}}
	}
}
\begin{document}

	\title{Minimal pairs, minimal fields and implicit constant fields}
	\author{Arpan Dutta}
	%------    GENERAL MACROS    -----
	\def\NZQ{\mathbb}               % the font for N,Z,Q,R,C
	\def\NN{{\NZQ N}}
	\def\QQ{{\NZQ Q}}
	\def\ZZ{{\NZQ Z}}
	\def\RR{{\NZQ R}}
	\def\CC{{\NZQ C}}
	\def\AA{{\NZQ A}}
	\def\BB{{\NZQ B}}
	\def\PP{{\NZQ P}}
	\def\FF{{\NZQ F}}
	\def\GG{{\NZQ G}}
	\def\HH{{\NZQ H}}
	\def\UU{{\NZQ U}}
	\def\P{\mathcal P}
	
	%------------------------------------------------
	% 
	%
	\let\union=\cup
	\let\sect=\cap
	\let\dirsum=\oplus
	\let\tensor=\otimes
	\let\iso=\cong
	\let\Union=\bigcup
	\let\Sect=\bigcap
	\let\Dirsum=\bigoplus
	\let\Tensor=\bigotimes
	
	\theoremstyle{plain}
	\newtheorem{Theorem}{Theorem}[section]
	\newtheorem{Lemma}[Theorem]{Lemma}
	\newtheorem{Corollary}[Theorem]{Corollary}
	\newtheorem{Proposition}[Theorem]{Proposition}
	\newtheorem{Problem}[Theorem]{}
	\newtheorem{Conjecture}[Theorem]{Conjecture}
	\newtheorem{Question}[Theorem]{Question}
	
	\theoremstyle{definition}
	\newtheorem{Example}[Theorem]{Example}
	\newtheorem{Examples}[Theorem]{Examples}
	\newtheorem{Definition}[Theorem]{Definition}
	
	\theoremstyle{remark}
	\newtheorem{Remark}[Theorem]{Remark}
	\newtheorem{Remarks}[Theorem]{Remarks}
	
	\newcommand{\trdeg}{\mbox{\rm trdeg}\,}
	\newcommand{\rr}{\mbox{\rm rat rk}\,}
	\newcommand{\sep}{\mathrm{sep}}
	\newcommand{\ac}{\mathrm{ac}}
	\newcommand{\ins}{\mathrm{ins}}
	\newcommand{\res}{\mathrm{res}}
	\newcommand{\Gal}{\mathrm{Gal}\,}
	\newcommand{\ch}{\mathrm{char}\,}
	\newcommand{\Aut}{\mathrm{Aut}\,}
	\newcommand{\kras}{\mathrm{kras}\,}
	\newcommand{\dist}{\mathrm{dist}\,}
	\newcommand{\ord}{\mathrm{ord}\,}
	
	\newcommand{\n}{\par\noindent}
	\newcommand{\nn}{\par\vskip2pt\noindent}
	\newcommand{\sn}{\par\smallskip\noindent}
	\newcommand{\mn}{\par\medskip\noindent}
	\newcommand{\bn}{\par\bigskip\noindent}
	\newcommand{\pars}{\par\smallskip}
	\newcommand{\parm}{\par\medskip}
	\newcommand{\parb}{\par\bigskip}
	\let\epsilon\varepsilon
	\let\phi=\varphi
	\let\kappa=\varkappa
	
	\def \a {\alpha}
	\def \b {\beta}
	\def \s {\sigma}
	\def \d {\delta}
	\def \g {\gamma}
	\def \o {\omega}
	\def \l {\lambda}
	\def \th {\theta}
	\def \D {\Delta}
	\def \G {\Gamma}
	\def \O {\Omega}
	\def \L {\Lambda}
	%
	%           We print on A4 paper
	%
	\textwidth=15cm \textheight=22cm \topmargin=0.5cm
	\oddsidemargin=0.5cm \evensidemargin=0.5cm \pagestyle{plain}
	% ------    END OF GENERAL MACROS    -------

\address{Department of Mathematics, IISER Mohali,
		Knowledge City, Sector 81, Manauli PO,
		SAS Nagar, Punjab, India, 140306.}
\email{arpan.cmi@gmail.com}
	
\date{\today}

\thanks{}

\keywords{Valuation, minimal pairs, key polynomials, pseudo-Cauchy sequences, valuation transcendental extensions}

\subjclass[2010]{12J20, 13A18, 12J25}	
	
\maketitle
	
	\begin{abstract}
		Minimal pairs of definition were introduced by Alexandru, Popescu and Zaharescu [\ref{AP sur une classe}, \ref{APZ characterization of residual trans extns}, \ref{APZ2 minimal pairs}] to study residue transcendental extensions. In this paper we obtain analogous results in the value transcendental case. We introduce the notion of minimal fields of definition for valuation transcendental extensions and show that they share some common ramification theoretic properties. The connection between minimal fields of definition and implicit constant fields is also investigated. Further, we explore the relationship between valuation transcendental extensions and pseudo-Cauchy sequences. 
	\end{abstract}
	
	\section{Introduction}
	
	Given a valued field $(K,\nu)$, it is of interest to understand the set of all possible extensions of $\nu$ to a rational function field $K(x)$. On the other hand, given an extension $\o$ of $\nu$ to $K(x)$, it is important to give a complete description of the valuation $\o$. These two problems are tightly connected and several objects have been introduced to tackle such problems. The notion of minimal pairs of definition was introduced by Alexandru, Popescu and Zaharescu to study residue transcendental extensions [\ref{AP sur une classe}, \ref{APZ characterization of residual trans extns}, \ref{APZ2 minimal pairs}]. MacLane [\ref{MacLane key pols}] introduced the notion of key polynomials, which was then generalized by Vaqui\'{e} in [\ref{Vaquie key pols}]. An alternative form of key polynomials was given by Spivakovsky and Novacoski in [\ref{Nova Spiva key pol pseudo convergent}]. The notion of pseudo-Cauchy sequences was introduced by Kaplansky in his celebrated paper [\ref{Kaplansky}]. All the relevant definitions will be provided in Section \ref{section preliminiaries}.
	
	\pars An extension $\o$ of a valuation $\nu$ on a field $K$ to a rational function field $K(x)$ satisfies the famous Abhyankar inequality:
	\[ \rr \o K(x)/\nu K  + \trdeg [K(x)\o:K\nu] \leq 1, \]
	where $\o K(x)$ and $\nu K$ denote the correpsonding value groups, $K(x)\o$ and $K\nu$ denote the corresponding residue fields and $\rr \o K(x)/\nu K $ is the $\QQ$-dimension of the divisible hull $\frac{\o K(x)}{\nu K}\tensor_{\ZZ}\QQ$. This inequality is a consequence of Theorem 1 of [\S 10.3, Chapter VI, \ref{Bourbaki}]. The extension $\o$ is said to be \textbf{value transcendental} if $\rr \o K(x)/\nu K = 1$ and \textbf{residue transcendental} if $\trdeg[K(x)\o:K\nu]=1$. The extension $\o$ is said to be \textbf{valuation transcendental} if it is either value or residue transcendental. In Section \ref{section min pair}, we extend the results obtained in [\ref{AP sur une classe}, \ref{APZ characterization of residual trans extns}, \ref{APZ2 minimal pairs}] to the case of value transcendental extensions. In particular, we show that a value transcendental extension is completely described by a minimal pair of definition [cf. Section \ref{section preliminiaries}]. We also consider the question: given a pair of definition $(a,\g)$ for a valuation transcendental extension $\o$, can we find a minimal pair of definition for it? Using the notion of complete distinguished chains, introduced by Popescu and Zaharescu in [\ref{PZ structure of irr poly over local fields}], we provide an answer to the above question in Theorem \ref{Thm min pair comp dist chain}, under the assumption that $K(a)|K$ is a defectless extension of henselian valued fields [cf. Section \ref{section preliminiaries}]. Further, in Theorem \ref{thm extensions coincide on K(x)} we show that for a valuation transcendental extension $\o$ of $\nu$ to $K(x)$ and an extension $\overline{\nu}$ of $\nu$ to $\overline{K}$, the simultaneous extensions of $\o$ and $\overline{\nu}$ to $\overline{K}(x)$ are tightly connected, where $\overline{K}$ is a fixed algebraic closure of $K$. As a consequence we obtain that there are a finite number of such common extensions. Although some of the results obtained in this section are already known, we recreate them here for the sake of completeness.

	\pars In Section \ref{section min fields} we introduce the notion of minimal fields of definition. A minimal field of definition for a valuation transcendental extension $\o$ is defined to be of the form $K(a)$, where $(a,\g)$ is a minimal pair of definition for $\o$. We show that the minimal fields of definition for $\o$ share some common ramification theoretic properties. The relationship between implicit constant fields, introduced by Kuhlmann in [\ref{Kuh value groups residue fields rational fn fields}], and the minimal fields of definition is also explored. For a valued field extension $(K(x)|K,\o)$ and an extension of $\o$ to $\overline{K(x)}$, the \textbf{implicit constant field} is defined to be the relative algebraic closure of $K$ in the henselization of $K(x)$ and is denoted by $IC(K(x)|K,\o)$. We obtain the following result in this paper:
	
	\begin{Theorem}\label{Thm IC and min fields}
		Take a valued field $(K,\nu)$, a valuation transcendental extension $\o$ of $\nu$ to $K(x)$, and a minimal field of definition $K(a)$ for $\o$. Take an extension $\overline{\o}$ of $\o$ to $\overline{K}(x)$ which has $(a,\g)$ as a minimal pair of definition for some $\g\in\overline{\o}\overline{K}(x)$, and fix an extension of $\overline{\o}$ to $\overline{K(x)}$. Take $b_1, b_2 \in\overline{K}$ such that $K(b_1) = K(a)\sect K^i$ and $K(b_2) = K(a)\sect K^r$, where $K^i$ and $K^r$ denote the absolute inertia field and the absolute ramification field of $(K,\nu)$. Write the henselizations of $K(a)$ and $K(b_i)$ as $K(a)^h$ and $K(b_i)^h$.
		
		If $\o$ is residue transcendental, then
		\begin{equation}\label{eqn r.t. IC and min fields}
			K(b_1)^h \subseteq IC(K(x)|K,\o)\subseteq K(a)^h.
		\end{equation}
		 
		If $\o$ is value transcendental, then
		\begin{equation}\label{eqn IC and min fields}
			K(b_2)^h \subseteq IC(K(x)|K,\o)\subseteq K(a)^h.
		\end{equation}
	\end{Theorem}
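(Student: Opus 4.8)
The plan is to analyze the henselization $K(x)^h$ of $(K(x),\overline\o|_{K(x)})$ directly and identify which elements of $\overline K$ lie in it. Since $IC(K(x)|K,\o)$ is by definition the relative algebraic closure of $K$ inside $K(x)^h$, the two inclusions amount to: (i) showing $K(a) \subseteq K(x)^h$, which gives the upper bound $K(a)^h \supseteq IC$ once we know the henselization is functorial (the henselization of $K(a)$ embeds in that of $K(x)$ over $K(x)$, hence $K(a)^h \subseteq K(x)^h$, and conversely any algebraic element of $K(x)^h$ over $K$ already lies in $K(a)^h$ — wait, that direction is the subtle one); and (ii) showing $K(b_i) \subseteq K(x)^h$ for the relevant $i$, which gives the lower bound. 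So the real content splits into an \emph{upper-bound argument} ($IC \subseteq K(a)^h$) and a \emph{lower-bound argument} ($K(b_i)^h \subseteq IC$).

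**For the upper bound**, the key is that $(a,\g)$ being a minimal pair of definition means $\overline\o$ is determined on $\overline K(x)$ by the data $(a,\g)$ via the formula (from Section~\ref{section min pair}) $\overline\o\big(\sum c_j (x-a)^j\big) = \min_j(\overline\nu c_j + j\g)$, and minimality says $[K(a):K]$ is least among all such representations. I would first pass to $\overline K(x)$, where $\overline\o$ has $(a,\g)$ as a pair of definition, and observe that over $K(a)$ the element $x-a$ is (value or residue) transcendental in the simplest possible way — so $K(a)(x)^h$ contains no new algebraic elements over $K(a)$ beyond $K(a)^h$ itself, because $x - a$ generates a "pure" transcendental valued extension with no defect and no residue/value extension. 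Then I restrict: the henselization $K(x)^h$ sits inside $K(a)(x)^h = K(a)^h(x)^h$ (here using that henselization commutes with the relevant base change along $K \hookrightarrow K(a)$), and its relative algebraic closure over $K$ is contained in $\overline K \cap K(a)^h(x)^h = K(a)^h \cap \overline K$, which is exactly $IC(K(a)(x)|K(a))$-free, i.e. equals $K(a)^h$ intersected with $\overline{K}$. This yields $IC(K(x)|K,\o) \subseteq K(a)^h$.

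**For the lower bound**, I would use the ramification-theoretic content: $b_1$ generates the maximal subextension of $K(a)|K$ inside the inertia field $K^i$, and $b_2$ the maximal subextension inside the ramification field $K^r$. The point is that inertial (resp. ramified) algebraic extensions are "invisible" to value group and residue field in the tame sense, and in particular $K(b_1)|K$ (resp. $K(b_2)|K$ in the value-transcendental case, where one only controls things up to the ramification field because wild defect can intervene) is forced to be contained in \emph{every} henselian subfield of $\overline{K(x)}$ containing $K$ over which $\o$ can already be defined — concretely, because $K(b_i) \subseteq K(a)$ and one shows that the pair $(b_i, \g')$ with a suitable $\g'$ still "defines" the restriction of $\overline\o$ up to an immediate extension, so $K(x)^h \supseteq K(b_i)$. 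I would make this precise by invoking Theorem~\ref{thm extensions coincide on K(x)} (the extensions of $\o$ and $\overline\nu$ to $\overline K(x)$ are tightly connected, finite in number) to control the decomposition/inertia behavior, together with the standard fact that for a henselian valued field $L$, $L^i$ and $L^r$ are generated by explicit roots (Hensel-lifted), so membership in $K(x)^h$ can be checked by a Hensel's-lemma/Krasner argument on the approximating polynomials.

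**The main obstacle** I expect is the asymmetry between the residue-transcendental and value-transcendental cases — precisely why one gets down to the inertia field $K^i$ in the first case but only to the ramification field $K^r$ in the second. This reflects that in the value-transcendental situation the relevant "defect-like" obstruction (wild ramification) cannot be pushed past the ramification field, so the lower bound genuinely weakens; handling this will require carefully tracking, for the value-transcendental $\overline\o$, how $\g \notin \overline\nu\overline K \otimes \QQ$-type conditions interact with the ramification filtration, likely via a comparison of $\overline\o(x-a)$-adic approximations and the structure theory of $K^r$. The second delicate point is justifying the base-change compatibilities of henselization used in the upper bound (that $K(x)^h \cap \overline K \subseteq K(a)^h$), which I would pin down using the uniqueness of henselization and the explicit description of $\overline\o$ from the minimal pair rather than any abstract flatness.
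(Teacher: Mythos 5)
Your upper-bound sketch is essentially the paper's argument: $IC(K(x)|K,\o)=\overline K\sect K(x)^h\subseteq\overline K\sect K(a,x)^h=IC(K(a,x)|K(a),\overline\o)$, and one then shows $IC(K(a,x)|K(a),\overline\o)=K(a)^h$. But your justification of that last equality — that $x-a$ generates an extension ``with no defect and no residue/value extension'' — is false in the residue transcendental case, where the residue field of $K(a,x)$ genuinely grows by a transcendental element. What is actually needed is that $(K(a,x)|K(a),\overline\o)$ is \emph{weakly pure}, and the nontrivial content (Lemma \ref{Lemma K(a,x)|K(a) is weakly pure}) is proving that $d(x-a)^e\overline\o$ is transcendental over $K(a)\overline\nu$ for suitable $d\in K(a)$ and $e$; one then invokes Lemma 3.7 of [\ref{Kuh value groups residue fields rational fn fields}]. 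Your sketch skips this entirely.

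The lower bound is where the real gap lies. Your proposed mechanism — a Hensel/Krasner argument on approximating polynomials, controlled via Theorem \ref{thm extensions coincide on K(x)} — cannot work as stated: Krasner-type arguments require $\g$ to exceed the Krasner constant of the element in question, which is precisely \emph{not} assumed here (that hypothesis is the subject of Theorem \ref{Thm gamma and kras}, not of this theorem). The actual mechanism is ramification-theoretic absorption: by Remarks \ref{Remark r.t. value grp res field} and \ref{Remark value group res field}, the residue field $K(a)\overline\nu$ (and, in the value transcendental case, also the value group $\overline\nu K(a)$) is already contained in $K(x)\o$ (resp.\ $\o K(x)$); Theorem 3 of [\ref{Dutta Kuh abhyankars lemma}] then places $K(b_i,x)$ inside the absolute inertia field of $(K(x),\o)$ with trivial residue field (and value group) extension, and in the value transcendental case one must additionally verify that $(a,\g)$ remains a \emph{minimal} pair over $K(b_2)$ so that $K(b_2,x)\overline\o=K(x)\o$, after which the immediate subextension $(K(b_2,x)^h|K(x)^h)$ of the defectless extension $(K(x)^i|K(x)^h)$ collapses. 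None of these steps appears in your proposal. Finally, you have the asymmetry backwards: since $K^i\subseteq K^r$, the value transcendental case yields the \emph{stronger} lower bound $K(b_2)^h\supseteq K(b_1)^h$, not a weaker one; the reason is that in the value transcendental case the entire value group $\overline\nu K(a)$ is absorbed into $\o K(x)$, which lets one push past the inertia field up to the ramification field.
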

	An important relevant problem is the following:
	\begin{Question}\label{question}
	Take notations and assumptions as in Theorem \ref{Thm IC and min fields}. Give an explicit computation for $IC(K(x)|K,\o)$. 
	\end{Question}
	The absolute inertia field and the absolute ramification field are separable-algebraic extensions of $K$. The problem of explicit computation of the implicit constant field when $K$ admits an arbitrary separable-algebraic minimal field of definition for $\o$ remains open. A partial solution to that problem is provided in the following theorem:
	
	\begin{Theorem}\label{Thm gamma and kras}
		Let notations and asumptions be as in Theorem \ref{Thm IC and min fields}. Assume that $a$ is separable over $K$. Then the following statements hold true:
		\sn (i) Assume that $\g>\kras(a,K)$. Then $IC(K(x)|K,\o) = K(a)^h$.
		\n (ii) Assume that $\o$ is value transcendental, $\nu$ admits a unique extension from $K$ to $K(a)$ and $\g<\kras(a,K)$. Then $(\overline{\o}K(a,x):\o K(x)) =j$ where $\overline{\o}(a-a_i) > \g$ for exactly $j$ many conjugates $a_i$ of $a$, including $a$ itself. Consequently, $IC(K(x)|K,\o) \subsetneq K(a)^h$.
		\n (iii) Assume that $\o$ is residue transcendental, $\nu$ admits a unique extension from $K$ to $K(a)$ and $\g\leq\kras(a,K)$. Then either $\overline{\o}K(a,x)\neq \o K(x)$, or, $[K(a,x)\overline{\o}:K(x)\o] = j$ where $\overline{\o}(a-a_i) \geq \g$ for exactly $j$ many conjugates $a_i$ of $a$, including $a$ itself. Consequently, $IC(K(x)|K,\o) \subsetneq K(a)^h$.
	\end{Theorem}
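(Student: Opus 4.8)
The plan is to treat the three statements by different means: (i) reduces to Krasner's Lemma, while (ii) and (iii) come from computing the ramification invariants of the restriction $\overline{\o}|_{K(a,x)}$ relative to $\o$, together with a short contradiction argument for the ``consequently'' clauses. For (i): the inclusion $IC(K(x)|K,\o)\subseteq K(a)^h$ is Theorem \ref{Thm IC and min fields}, so I only need the reverse inclusion. Since $(a,\g)$ is a minimal pair of definition for $\overline{\o}$ we have $\overline{\o}(x-a)=\g$, and every conjugate $a'\neq a$ of $a$ over the henselian field $K(x)^h$ is a conjugate of $a$ over $K$, so $\overline{\o}(a-a')=\overline{\nu}(a-a')\le\kras(a,K)<\g=\overline{\o}(x-a)$. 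As $a$ is separable over $K$, hence over $K(x)^h$, Krasner's Lemma yields $a\in K(x)^h$. Therefore $K(a)^h\subseteq K(x)^h$, and being algebraic over $K$ it lies in $IC(K(x)|K,\o)$; equality follows.

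For (ii) and (iii), let $Q\in K[x]$ be the minimal polynomial of $a$, of degree $n$, with roots $a=a_1,\dots,a_n$ in $\overline{K}$. Then $\overline{\o}|_{K(a,x)}$ is the monomial valuation over $(K(a),\overline{\nu})$ with $x-a\mapsto\g$, so $\overline{\o}K(a,x)=\overline{\nu}K(a)+\ZZ\g$. Using the basic property of minimal pairs $\overline{\o}(g(x))=\overline{\nu}(g(a))$ for $\deg g<n$ (Section \ref{section min pair}) and the $Q$-adic expansion of an arbitrary $f\in K[x]$, I get $\o K(x)=\overline{\nu}K(a)+\ZZ\,\o(Q)$, where $\o(Q)=\overline{\o}(Q(x))=\sum_i\overline{\o}(x-a_i)$. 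Here $\overline{\o}(x-a_i)=\g$ whenever $\overline{\o}(a-a_i)\ge\g$ — for $\overline{\o}(a-a_i)=\g$ this is a short residue computation, since the residue of $(x-a_i)/c$ equals $t+\overline{(a-a_i)/c}\neq0$, with $t$ the residue of $(x-a)/c$ and $\overline{\nu}(c)=\g$ — and $\overline{\o}(x-a_i)=\overline{\nu}(a-a_i)$ otherwise; hence $\o(Q)=j\g+\d$ with $j=\#\{i:\overline{\o}(a-a_i)\ge\g\}$ and $\d=\sum_{\overline{\o}(a-a_i)<\g}\overline{\nu}(a-a_i)$. In case (ii), $\o$ value transcendental forces $\g\notin\overline{\nu}\overline{K}\tensor_{\ZZ}\QQ$, so $\overline{\o}\overline{K}(x)=\overline{\nu}\overline{K}\dirsum\ZZ\g$; comparing components there gives $\d\in\overline{\nu}K(a)$, whence $\o K(x)=\overline{\nu}K(a)+\ZZ\,j\g$, and since $\g$ has infinite order modulo $\overline{\nu}K(a)$ this yields $(\overline{\o}K(a,x):\o K(x))=j$ (the $j$ conjugates with $\overline{\o}(a-a_i)>\g$ being exactly those counted).

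For (iii), if $\overline{\o}K(a,x)\neq\o K(x)$ there is nothing to prove, so assume $\overline{\o}K(a,x)=\o K(x)$; I must show $[K(a,x)\overline{\o}:K(x)\o]=j$. For this I use the explicit descriptions of the residue fields of residue transcendental minimal pair extensions (Section \ref{section min pair}): realizing $\overline{K}(x)\overline{\o}=(\overline{K}\overline{\nu})(t)$ with $t$ the residue of $(x-a)/c$, one has $K(a,x)\overline{\o}=(K(a)\overline{\nu})(u\,t^{e})$ for a suitable unit $u$, where $e$ is the order of $\g$ modulo $\overline{\nu}K(a)$, while $K(x)\o$ admits an explicit description via the $Q$-adic expansion, in which the residue of an appropriate power of $Q(x)$ reduces up to a unit to a power of a polynomial in $t$ of degree $j$ (built from the residues $\overline{(a-a_i)/c}$ of the conjugates $a_i\neq a$ with $\overline{\o}(a-a_i)=\g$). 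Since the hypothesis $\overline{\o}K(a,x)=\o K(x)$ pins down the compatibility between $e$ and the relevant exponents, a direct comparison of these two subfields of $(\overline{K}\overline{\nu})(t)$ produces the index $j$.

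Finally, for the ``consequently'' clauses of (ii) and (iii): if $IC(K(x)|K,\o)=K(a)^h$ then $a\in K(a)\subseteq IC(K(x)|K,\o)\subseteq K(x)^h$, so $K(a,x)\subseteq K(x)^h$, and therefore $\overline{\o}K(a,x)=\o K(x)$ and $K(a,x)\overline{\o}=K(x)\o$ (the henselization preserves the value group and the residue field). But $\g<\kras(a,K)$ in (ii), resp. $\g\le\kras(a,K)$ in (iii), forces a conjugate $a_i\neq a$ with $\overline{\nu}(a-a_i)=\kras(a,K)$, so $j\ge2$, contradicting $(\overline{\o}K(a,x):\o K(x))=j$ in (ii) and $[K(a,x)\overline{\o}:K(x)\o]=j$ in (iii). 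Combined with $IC(K(x)|K,\o)\subseteq K(a)^h$, this gives the strict inclusions. I expect the main obstacle to be the residue field computation in (iii): extracting the index $j$ requires identifying $K(x)\o$ precisely enough inside $(\overline{K}\overline{\nu})(t)$, which is exactly what the structural results on minimal pair extensions from Section \ref{section min pair} are meant to provide.
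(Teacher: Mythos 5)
Parts (i) and (ii) of your argument are correct and follow the paper's own route: (i) is exactly the Krasner's Lemma argument, and (ii) is the same value-group computation, namely $\overline{\o}K(a,x)=\overline{\nu}K(a)\dirsum\ZZ\g$ versus $\o K(x)=\overline{\nu}K(a)\dirsum\ZZ\,\o Q$ with $\o Q=j\g+\d$, where the containment $\o K(x)\subseteq\overline{\o}K(a,x)$ together with $\g\notin\overline{\nu}\overline{K}\tensor_\ZZ\QQ$ forces $\d\in\overline{\nu}K(a)$ and hence index $j$. (Your phrase ``comparing components'' should be anchored to that containment --- $\d\in\overline{\nu}\overline{K}$ is automatic and by itself gives nothing --- but the needed input is already on your page.)

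Part (iii), however, has a genuine gap: the entire content of the case $\overline{\o}K(a,x)=\o K(x)$ is the claim that the residue of a suitable element of $K(x)$ is, up to factors in $K(a)\overline{\nu}$, a polynomial of degree exactly $j$ in the generator of $K(a,x)\overline{\o}$ over $K(a)\overline{\nu}$, and you assert this (``reduces up to a unit to a power of a polynomial in $t$ of degree $j$ \dots a direct comparison \dots produces the index $j$'') without proving it. Concretely, three things must be established. First, one needs that $K(x)\o$ is generated over $K(a)\overline{\nu}$ by the residue of an element of the form $g^jhQ^e$ with $\deg f,\deg g,\deg h<\deg Q$; this is not formal --- it rests on the fact that residues of products of polynomials of degree less than $\deg Q$ (with coefficients in $K(a)$) lie in $K(a)\overline{\nu}$, which in turn uses the minimality of $(a,\g)$ via Proposition 1.1 of [APZ2] (Lemma \ref{Lemma f/g omega in K(a)v} in the paper). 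Second, expanding $Q^e=\sum c_i(x-a)^i$, one must check that only the exponents $i$ divisible by $e$ contribute to the residue, using the minimality of $e$. Third --- and this is the crux --- one must show that in the resulting expression $\sum_i (g^{j-i}hc_{ie})\overline{\o}\,(g(x-a)^e\overline{\o})^i$ the coefficient of the $j$-th power is nonzero while the coefficients of all higher powers vanish; this requires estimating $\overline{\nu}c_{ie}$ via the elementary symmetric polynomials in the differences $a_t-a$, separating the conjugates with $\overline{\nu}(a-a_t)\geq\g$ from the rest. None of this is carried out in your proposal, and it is precisely where the hypothesis ``$j$ conjugates satisfy $\overline{\nu}(a-a_i)\geq\g$'' enters quantitatively. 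Your reductions (the dichotomy on value groups, and the ``consequently'' clauses via $a\notin K(x)^h$) are fine, but as written the theorem's part (iii) is not proved.
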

	In particular, Theorem \ref{Thm gamma and kras} provides a partial solution to Question \ref{question} when $(K,\nu)$ is either henselian or has rank one (that is, $\nu K$ is an ordered subgroup of $\RR$), as observed in Corollary \ref{Corollary IC when K henslian or rank one}.
	
	 On the other hand, if there is a minimal field of definition for $\o$ that is a purely inseparable extension of $K$, then we obtain that $IC(K(x)|K,\o)$ equals the henselization of $K$. The problem remains open when a minimal field of definition for $\o$ is an arbitrary inseparable extension of $K$.
	\newline Finally, in Theorem \ref{Thm gamma > vK} we provide a complete solution to Question \ref{question} when $\o$ admits a unique pair of definition. We show that in that situation the implicit constant field is the separable closure of $K$ in the henselization of the minimal field of definition for $\o$.
	
	\pars In Section \ref{section examples} we provide several examples illustrating that the ramification or field theoretic structure of the minimal field of definition may not necessarily be recovered from the information of the implicit constant field. Examples showing that the implicit constant field may be a proper non-trivial subextension of the henselization of a minimal field of definition for $\o$ are also constructed.
	
	\pars Section \ref{section pseudo cauchy} deals with the connection between valuation transcendental extensions and pseudo-Cauchy sequences. Specifically, we consider the case when $x$ is a limit of a pseudo-Cauchy sequence $(z_\mu)_{\mu<\l}$ in $(K,\nu)$, and study its connection with the pairs of definition and key polynomials for $\o$. 
	
	\pars For basic information on valuation theory, we refer the reader to [\ref{Endler book}, \ref{Engler book}, \ref{ZS2}].
	
%---------------------------------------------------------------------------------------------	
	\section*{Acknowledgements}
	 This work was supported by the Post-Doctoral Fellowship of the National Board of Higher Mathematics, India. The author would like to thank Franz-Viktor Kuhlmann for patiently reading multiple versions of the paper, and for his many insightful suggestions.
	
%--------------------------------------------------------------------------------------	
	
	\section{Preliminaries}\label{section preliminiaries}
	
		We adopt the following convention in the present paper: $(K,\nu)$ denotes a field $K$ equipped with a valuation $\nu$, $\o$ denotes some valuation transcendental extension of $\nu$ to $K(x)$, $\overline{\nu}$ denotes some extension of $\nu$ to a fixed algebraic closure $\overline{K}$ of $K$, and $\overline{\o}$ denotes some common extension of $\o$ and $\overline{\nu}$ to $\overline{K}(x)$. The existence of such a common extension is guaranteed by Lemma 3.2 of [\ref{Kuh value groups residue fields rational fn fields}]. However, $\overline{\o}$ is not necessarily uniquely determined by $\o$ and $\overline{\nu}$. The value group, residue field, valuation ring and maximal ideal of a valued field $(L,\nu)$ are denoted by $\nu L$, $L\nu$, $\mathcal{O}_L$ and $\mathcal{M}_L$. The value of an element $l\in L$ is denoted by $\nu l$ and its residue by $l\nu$. Denote a valued field extension as $(L|K,\nu)$ where $L|K$ is an extension of fields, $\nu$ is a valuation on $L$ and $K$ is equipped with the restricted valuation. If $L$ and $K$ are subfields of a larger valued field $(F,\nu)$, then we will also write $(L|K,\nu)$ to mean that they are equipped with the restricted valuations. Given an extension of valued fields $(K(y)|K,\nu)$, we define the set
		\[  \nu(y-K):= \{ \nu(y-a)\mid a\in K \}. \]
		The separable closure of a field $K_1$ in some overfield $K_2$ is denoted by $(K_2|K_1)^\sep$.
		
		\subsection{Ramification Theory}
		We recall some aspects of ramification theory and general valuation theory [cf. \ref{Abh book}, \ref{Endler book}, \ref{Engler book}, \ref{ZS2}]. Set $G:= \Gal(\overline{K}|K)$. The absolute decomposition group of the extension $(\overline{K}|K,\overline{\nu})$ is defined as
		\[  G_d := \{\s \in G \mid \overline{\nu} \circ \s = \overline{\nu} \text{ on } K^\sep \}.\]
		The absolute inertia group is defined as
		\[G_i := \{\s \in G \mid \overline{\nu}(\s a - a) > 0 \text{ for all } a \in \mathcal{O}_{K^\sep} \}.\]
		The absolute ramification group is defined as
		\[ G_r := \{\s \in G \mid \overline{\nu}(\s a - a) > \overline{\nu} a \text{ for all } a \in K^\sep \setminus \{0\} \}. \]
		The corresponding fixed fields in $K^\sep$ will be denoted by $K^d$, $K^i$ and $K^r$ and they are called the \textbf{absolute decomposition field}, \textbf{absolute inertia field} and the \textbf{absolute ramification field} of $(K,\nu)$ respectively. For an arbitrary algebraic extension of valued fields $(L|K,\overline{\nu})$, we have that $L^d = L.K^d$, $L^i = L.K^i$ and $L^r = L.K^r$. The \textbf{henselization} of a valued field $K$ is the smallest henselian field containing $K$, and we can consider it to be the same as the absolute decomposition field $K^d$. The henselization is also denoted by $K^h$. As a consequence, for algebraic extensions $L|K$, we have 
		\[ L^h = L.K^h. \]
		
		\pars We state a simple form of the \textbf{fundamental inequality} here: for every finite extension $(L|K,\overline{\nu})$,
		\[ [L:K] \geq (\overline{\nu}L:\nu K)[L\overline{\nu}:K\nu]. \] 
		The extension $(L|K,\overline{\nu})$ is said to be \textbf{defectless} if we have equality in the above inequality.
		
		\pars The \textbf{characteristic exponent} of $(K,\nu)$, denoted by $p$, is defined to be equal to $\ch K\nu$ if $\ch K\nu>0$, and $1$ otherwise. The \textbf{Lemma of Ostrowski} states that whenever $\nu$ admits a unique extension $\overline{\nu}$ to a finite extension $E$ of $K$, then 
		\[ [E:K] = (\overline{\nu}E:\nu K)[E\overline{\nu}:K\nu]p^n \]
		for some $n\in\NN$. As a consequence, any extension of henselian valued fields of degree coprime to $p$ is a defectless extension.
		
		An algebraic extension $(L|K,\overline{\nu})$ of henselian valued fields is said to be \textbf{tame} if every finite subextension $(E|K,\overline{\nu})$ satisfies the following conditions:
		\sn (TE1) $\ch K\nu$ does not divide $(\overline{\nu}E:\nu K)$,
		\n (TE2) the residue field extension $E\overline{\nu}|K\nu$ is separable,
		\n (TE3) $(E|K,\overline{\nu})$ is defectless.\\
		An algebraic extension $(F|K,\overline{\nu})$ of henselian valued fields is said to be \textbf{purely wild} if $\overline{\nu}F/\nu K$ is a $p$-group and $F\overline{\nu}|K\nu$ is purely inseparable. Equivalently, $F\sect K^r = K$. An extension of valued fields is said to be \textbf{immediate} if there is no value group or residue field extension. In particular, immediate extensions and purely inseparable extensions of henselian valued fields are purely wild extensions.
		
		\subsection{Krasner's Lemma} Take a valued field $(K,\nu)$ and set $G:= \Gal(\overline{K}|K)$. Choose $a\in\overline{K}\setminus K$ which is not purely inseparable over $K$. The \textbf{Krasner constant} of $a$ over $K$ is defined as
		\[ \kras(a,K):= \max\{ \overline{\nu}(\s a-\tau a) \mid \s, \, \tau\in G \text{ and } \s a\neq \tau a \}. \]
		The fact that all extensions of $\nu$ to $\overline{K}$ are conjugate implies that $\kras(a,K)$ is independent of the choice of the extension $\overline{\nu}$. For the same reason, whenever $\nu$ admits a unique extension from $K$ to $K(a)$, we can define the Krasner's constant as 
		\[ \kras(a,K):= \max\{ \overline{\nu}(a-\s a) \mid \s \in G \text{ and } a\neq \s a \}. \]
		We will now state a variant of the important Krasner's Lemma [cf. Lemma 2.21, \ref{Kuh value groups residue fields rational fn fields}]. 
		\begin{Lemma}\label{Lemma Krasner}
			Take a separable-algebraic extension $K(a)|K$ and $(K(a,y)|K,\nu)$ be any valued extension such that 
			\[ \nu(a-y)>\kras(a,K). \]
			Then for every extension of $\nu$ from $K(a)$ to $\overline{K(a,y)}$, we have that $a\in K(y)^h$. 
		\end{Lemma}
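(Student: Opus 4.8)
The plan is to argue inside a henselization and exploit that over a henselian valued field the extension of the valuation to the algebraic closure is unique, so every embedding over it preserves values. Fix the given extension of $\nu$ from $K(a)$ to $\overline{K(a,y)}$, let $L := K(y)^h$ be the henselization of $K(y)$ taken inside $\overline{K(a,y)}$ with respect to this valuation, and let $\overline{L}$ be the algebraic closure of $L$ inside $\overline{K(a,y)}$. Since $K(a)|K$ is separable-algebraic, $a$ is separable over $K$, hence over $L\supseteq K$. If $a\in K$ there is nothing to prove, so assume $a\notin K$; then $a$ has at least two conjugates over $K$ and $\kras(a,K)$ is defined. It suffices to show $a\in L$.

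First I would show that every $L$-embedding $\s$ of $L(a)$ into $\overline{L}$ fixes $a$. Extend such a $\s$ to an automorphism of $\overline{L}$ over $L$. Because $L$ is henselian, $\nu$ extends uniquely from $L$ to $\overline{L}$, so $\nu\circ\s=\nu$ on $\overline{L}$. Since $y\in L$ we have $\s y=y$, hence $\nu(\s a-y)=\nu(\s(a-y))=\nu(a-y)>\kras(a,K)$. Combining with the hypothesis and the ultrametric inequality,
\[ \nu(a-\s a)=\nu\big((a-y)-(\s a-y)\big)\geq\min\{\nu(a-y),\,\nu(\s a-y)\}>\kras(a,K). \]
On the other hand $\s$ fixes $K$, so $\s a$ is a conjugate of $a$ over $K$, lying in the relative algebraic closure of $K$ in $\overline{K(a,y)}$; the restriction of our valuation to that algebraic closure is an extension of $\nu$, and since all extensions of $\nu$ to an algebraic closure of $K$ are conjugate, the Krasner constant computed with it equals $\kras(a,K)$. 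Therefore, if $\s a\neq a$ then $\nu(a-\s a)\leq\kras(a,K)$, a contradiction. Hence $\s a=a$.

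Since $a$ is separable over $L$ and is fixed by every $L$-embedding of $L(a)$ into $\overline{L}$, we get $[L(a):L]=1$, that is, $a\in L=K(y)^h$, as claimed.

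The one point that I expect to need careful wording is precisely this identification of $\kras(a,K)$ across different valuations: the conjugates of $a$ here sit in $\overline{K(a,y)}$ carrying a valuation that need not be the fixed $\overline{\nu}$ on $\overline{K}$ used in the definition of $\kras(a,K)$, so one must explicitly invoke the conjugacy of all extensions of $\nu$ to an algebraic closure of $K$ to see that the bound $\kras(a,K)$ still applies. Everything else reduces to the uniqueness of valuation extensions over henselian fields together with the ultrametric inequality.
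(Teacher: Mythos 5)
Your proof is correct. Note that the paper does not actually prove this lemma --- it states it as a known variant of Krasner's Lemma, citing Lemma 2.21 of Kuhlmann's \emph{Value groups, residue fields and bad places of rational function fields} --- but your argument (pass to $L=K(y)^h$, use uniqueness of the extension of the valuation over the henselian field $L$ to get $\nu\circ\s=\nu$, apply the ultrametric inequality to force $\nu(a-\s a)>\kras(a,K)$, and conclude $\s a=a$ for every $L$-embedding since $a$ is separable) is exactly the standard proof of the henselian form of Krasner's Lemma, and you correctly flag and resolve the one delicate point, namely that $\kras(a,K)$ is independent of which extension of $\nu$ to the algebraic closure of $K$ is used, which the paper itself records just before the definition.
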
 
		Take an algebraic extension $(L|K,\overline{\nu})$ and a separable-algebraic element $a$ over $K$. Then $a$ is separable-algebraic over $L$. Further, the minimal polynomial of $a$ over $L$ divides that over $K$. It then follows from the definition of the Krasner constant that
		\[ \kras(a,L)\leq \kras(a,K). \]

		\subsection{Artin-Schreier extensions} Take a valued field $(K,\nu)$ with $\ch K=p>0$ and a polynomial $f(x):=x^p-x-c$ over $K$. Take $a\in\overline{K}$ such that $f(a)=0$. Such a polynomial is said to be an Artin-Schreier polynomial.  When $K(a)|K$ is a non-trivial extension, it is said to be an Artin-Schreier extension. Applying the Frobenius endomorphism, we observe that
		\[ f(a+i) = (a+i)^p - (a+i)-c = a^p+i^p-a-i-c= i^p-i=0 \]
		for all $i\in\FF_p$. Thus the conjugates of $a$ are $a,a+1,\dotsc,a+p-1$. It follows that $f(x)$ is either irreducible, or it splits completely over $K$. Further, whenever $\overline{\nu}a\geq 0$, then $\nu c= \overline{\nu}(a^p-a)\geq 0$. Thus if we have $\nu c<0$, then $\overline{\nu}a<0$. It then follows from the triangle inequality that 
		\[ \nu c<0 \Longrightarrow \nu c = \overline{\nu}a^p = p\overline{\nu}a. \]

		\subsection{Pseudo-Cauchy sequences} A well-ordered set $(z_\mu)_{\mu<\l}$ in a valued field $(K,\nu)$, where $\l$ is some limit ordinal, is said to form a \textbf{pseudo-Cauchy sequence} if $\nu(z_{\mu_1}-z_{\mu_2}) < \nu(z_{\mu_2}-z_{\mu_3})$ for all $\mu_1<\mu_2<\mu_3<\l$. It follows from the triangle inequality that $\nu(z_\mu - z_{\rho}) = \nu(z_\mu-z_{\mu+1})$ for all $\mu<\rho<\l$. An element $l$ in some valued field extension $(L,\nu)$ of $(K,\nu)$ is said to be a \textbf{limit} of $(z_\mu)_{\mu<\l}$ if $\nu(l -z_\mu) = \nu(z_\mu-z_{\mu+1})$ for all $\mu<\l$.
		\newline Take a polynomial $f(x)\in K[x]$ and a pseudo-Cauchy sequence $(z_\mu)_{\mu<\l}$ over $K$. It follows from Lemma 5 of [\ref{Kaplansky}] that the sequence $(\nu f(z_\mu))_{\mu<\l}$ is either ultimately constant, or it is ultimately monotonically increasing. If $(\nu f(z_\mu))_{\mu<\l}$ is ultimately constant for all polynomials $f$ over $K$, then $(z_\mu)_{\mu<\l}$ is said to be a pseudo-Cauchy sequence of \textbf{transcendental type}. Otherwise, it is of \textbf{algebraic type}. For a pseudo-Cauchy sequence $(z_\mu)_{\mu<\l}$ of algebraic type, a monic polynomial $f(x)\in K[x]$ of minimal degree such that $(\nu f(z_\mu))_{\mu<\l}$ is ultimately monotonically increasing is said to be an \textbf{associated minimal polynomial} of $(z_\mu)_{\mu<\l}$.
		
		\begin{Proposition}\label{Proposition Kaplansky}
			Take an extension $(K(y)|K,\nu)$ and a pseudo-Cauchy sequence $(z_\mu)_{\mu<\l}$ over $(K,\nu)$. Then either $y$ is a limit of $(z_\mu)_{\mu<\l}$, or, $(\nu (y-z_\mu))_{\mu<\l}$ is ultimately constant.
		\end{Proposition}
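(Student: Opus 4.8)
The plan is to prove the contrapositive of the first alternative: assuming $y$ is \emph{not} a limit of $(z_\mu)_{\mu<\l}$, I will show that $(\nu(y-z_\mu))_{\mu<\l}$ is ultimately constant. By the definition of a limit, if $y$ is not a limit then there is a single index $\mu_0<\l$ with $\nu(y-z_{\mu_0})\neq\nu(z_{\mu_0}-z_{\mu_0+1})$. The whole argument then rests on exploiting this one ``bad'' index together with the standard identity $\nu(z_\mu-z_\rho)=\nu(z_\mu-z_{\mu+1})$ for $\mu<\rho<\l$ recorded above, and the triangle inequality in its strict form (distinct values force the value of the sum to be the smaller one).

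First I would treat the case $\nu(y-z_{\mu_0})<\nu(z_{\mu_0}-z_{\mu_0+1})$. For any $\rho$ with $\mu_0<\rho<\l$ we have $\nu(z_{\mu_0}-z_\rho)=\nu(z_{\mu_0}-z_{\mu_0+1})>\nu(y-z_{\mu_0})$, so writing $y-z_\rho=(y-z_{\mu_0})+(z_{\mu_0}-z_\rho)$ and using that the two summands have distinct values gives $\nu(y-z_\rho)=\nu(y-z_{\mu_0})$; hence the sequence is constant beyond $\mu_0$. In the remaining case $\nu(y-z_{\mu_0})>\nu(z_{\mu_0}-z_{\mu_0+1})$ I would first compute $\nu(y-z_{\mu_0+1})=\nu(z_{\mu_0}-z_{\mu_0+1})$ from $y-z_{\mu_0+1}=(y-z_{\mu_0})+(z_{\mu_0}-z_{\mu_0+1})$, and then repeat the previous computation with $\mu_0$ replaced by $\mu_0+1$: for $\rho>\mu_0+1$ one has $\nu(z_{\mu_0+1}-z_\rho)=\nu(z_{\mu_0+1}-z_{\mu_0+2})>\nu(z_{\mu_0}-z_{\mu_0+1})=\nu(y-z_{\mu_0+1})$, so $\nu(y-z_\rho)=\nu(y-z_{\mu_0+1})$ is again constant beyond $\mu_0+1$. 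Either way $(\nu(y-z_\mu))_{\mu<\l}$ is ultimately constant, as desired.

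I expect no serious obstacle here; the only point requiring a little care is the second case, where the ``bad'' index $\mu_0$ itself need not be the one at which the sequence stabilizes, so one must pass to $\mu_0+1$ first. Alternatively one could deduce the dichotomy from Lemma 5 of [\ref{Kaplansky}] applied to the linear polynomial $x-y$ over $K(y)$, after observing that a pseudo-Cauchy sequence over $(K,\nu)$ is automatically one over $(K(y),\nu)$; but even then one would still need the case analysis above to upgrade the ``ultimately monotonically increasing'' alternative of that lemma to the statement that $y$ is a limit, so the direct argument seems the most economical.
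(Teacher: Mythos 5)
Your proof is correct, and it takes a genuinely different route from the paper. The paper argues in the other direction: it applies Lemma 5 of Kaplansky to the linear polynomial $f(x)=x-y$ over $K(y)$ to get that $(\nu(y-z_\mu))_{\mu<\l}$ is either ultimately constant or ultimately strictly increasing, and then shows that in the increasing case the triangle inequality forces $\nu(y-z_\mu)=\nu(z_\mu-z_{\mu+1})$ for \emph{all} $\mu<\l$ (first for $\mu$ beyond the threshold, then for the earlier indices), i.e.\ $y$ is a limit. You instead prove the contrapositive directly: if the limit condition fails at a single index $\mu_0$, the ultrametric inequality pins down $\nu(y-z_\rho)$ for all later $\rho$, stabilizing the sequence from $\mu_0$ (when $\nu(y-z_{\mu_0})<\nu(z_{\mu_0}-z_{\mu_0+1})$) or from $\mu_0+1$ (in the opposite case, where your extra step computing $\nu(y-z_{\mu_0+1})=\nu(z_{\mu_0}-z_{\mu_0+1})$ is exactly the care needed). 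Your case analysis is complete and each application of the strict triangle inequality is justified. What your approach buys is self-containedness: it avoids any appeal to Kaplansky's Lemma 5, using only the identity $\nu(z_\mu-z_\rho)=\nu(z_\mu-z_{\mu+1})$ and the ultrametric; what the paper's approach buys is brevity on the page, since the constant/increasing dichotomy is outsourced to a cited result. Your closing remark is also accurate: even with Lemma 5 in hand, one still has to convert ``ultimately increasing'' into ``$y$ is a limit,'' which is essentially the same triangle-inequality bookkeeping.
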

		
		\begin{proof}
			Consider the polynomial $f(x) = x-y \in K(y)(x)$. Observe that $(z_\mu)_{\mu<\l}$ is a pseudo-Cauchy sequence over $(K(y),\nu)$. Assume that $(\nu(y-z_\mu))_{\mu<\l} = (\nu f(z_\mu))_{\mu<\l}$ is not ultimately constant. In light of Lemma 5 of [\ref{Kaplansky}] the sequence $(\nu f(z_\mu))_{\mu<\l}$ is ultimately monotonically increasing. So there exists some ordinal $\mu_0<\l$ such that $\nu (y-z_{\mu_2})>\nu (y-z_{\mu_1})$ for all $\mu_0<\mu_1<\mu_2<\l$. It follows from the triangle inequality that 
			\[ \nu(y-z_\mu) = \nu(z_\mu- z_{\mu +1}) \text{ for all } \mu>\mu_0. \]
			Now take any ordinal $\mu^\prime<\mu_0$ and $\mu>\mu_0$. Then $\nu(y-z_\mu) = \nu(z_\mu- z_{\mu +1})>\nu (z_\mu-z_{\mu^\prime})$. As a consequence of the triangle inequality, we now obtain that
			\[ \nu(y-z_\mu) = \nu(z_\mu- z_{\mu +1}) \text{ for all } \mu<\l. \]
			Hence $y$ is a limit of $(z_\mu)_{\mu<\l}$. 
		\end{proof}

		\subsection{Pure and weakly pure extensions} An extension $(K(x)|K,\o)$ is said to be \textbf{pure} if it satisfies one of the following conditions:
		\sn (PE1) $\o(x-a)$ is not a torsion element modulo $\nu K$ for some $a\in K$,
		\n (PE2) $\o b(x-a)=0$ and $b(x-a)\o$ is transcendental over $K\nu$ for some $a,b \in K$,
		\n (PE3) $x$ is the limit of some pseudo-Cauchy sequence of transcendental type in $(K,\nu)$.\\
		It has been shown in Lemma 3.5 of [\ref{Kuh value groups residue fields rational fn fields}] that $\o K(x)/\nu K$ is torsion free and $K\nu$ is relatively algebraically closed in $K(x)\o$ when $(K(x)|K,\o)$ is a pure extension. As a consequence we obtain the following facts:
		\sn (i) In case (PE1), $\o K(x) = \nu K\dirsum \ZZ\o(x-a) \text{ and } K(x)\o = K\nu$.
		\n (ii) In case (PE2), $\o K(x) =\nu K$ and $K(x)\o = K\nu (b(x-a)\o)$.\\  
		An extension $(K(x)|K,\o)$ is always pure when $K$ is algebraically closed [Lemma 3.6, \ref{Kuh value groups residue fields rational fn fields}]. 
		\pars An extension $(K(x)|K,\o)$ is said to be \textbf{weakly pure} if it is either pure, or there exist $a,b\in K$ and $e\in\NN$ such that $\o b(x-a)^e = 0$ and $b(x-a)^e\o$ is transcendental over $K\nu$.

		\subsection{Homogeneous Sequences} The notion of homogeneous sequences was introduced by Kuhlmann [\ref{Kuh value groups residue fields rational fn fields}, \ref{Kuh corrections value groups, residue fields, bad places}]. An element $a$ is said to be \textbf{strongly homogeneous} over $(K,\nu)$ if $a\in K^\sep\setminus K$, the extension of $\nu$ from $K$ to $K(a)$ is unique and $\overline{\nu}a = \kras(a,K)$. 
		\newline Let $(L|K,\nu)$ be an extension of valued fields and $a, y\in L$. We will say that $a$ is a \textbf{homogeneous approximation} of $y$ over $K$ if $a-d$ is strongly homogeneous over $K$ for some $d\in K$, and $\nu(y-a)> \nu(a-d)$. A sequence of elements $(a_i)_{i\in S}$ in $\overline{K}$, where $S$ is an initial segment of $\NN$ and $a_0:=0$, is said to form a \textbf{homogeneous sequence for $y$} if $a_i - a_{i-1}$ is a homogeneous approximation of $y-a_{i-1}$ over $K(a_0,\dotsc,a_{i-1})$ for all $i\in S$.

		\subsection{Minimal pairs and key polynomials} Take a polynomial $f(x)\in K[x]$ with $\deg f =n$ and take $a\in K$. Write 
		\[ f(x) = \sum_{i=0}^{n} c_i (x-a)^i \]
		where $c_i\in K$. Now take some $\g$ in some ordered abelian group containing $\nu K$. Define the map $\nu_{a,\g}:K[x]\longrightarrow \nu K +\ZZ\g$ by setting 
		\[ \nu_{a,\g} f:= \min \{ \nu c_i +i\g \}. \]
		Extend $\nu_{a,\g}$ canonically to $K(x)$. Then $\nu_{a,\g}$ is a valuation on $K(x)$ [Lemma 3.10, \ref{Kuh value groups residue fields rational fn fields}]. By definition, $\nu_{a,\g}(x-a)=\g$. Take any $b\in K$. Then $\nu_{a,\g}(x-b) = \min\{ \g,\nu(a-b) \} \leq \g$. It follows that 
		\begin{equation}
			\nu_{a,\g}(x-a)=\g=\max\nu_{a,\g}(x-K).
		\end{equation}
		It has been shown in Theorem 3.11 of [\ref{Kuh value groups residue fields rational fn fields}] that $(K(x)|K,\o)$ is a valuation transcendental extension if and only if $\overline{\o} = \overline{\nu}_{a,\g}$ for some $a\in \overline{K}$ and $\g\in\overline{\o}\overline{K}(x)$. Further, the extension is value transcendental if and only if $\g\notin\overline{\nu}\overline{K}$ and the extension is residue transcendental if and only if $\g\in\overline{\nu}\overline{K}$. Such a pair $(a,\g)$ is said to be a \textbf{pair of definition} for $\o$. A valuation transcendental extension can have multiple pairs of definition. The following lemma, proved in Proposition 3 of [\ref{AP sur une classe}], gives us a relation between the different pairs of definition: 
		
		\begin{Lemma}
			Take $a,\,a^\prime \in\overline{K}$ and $\g,\g^\prime$ in some ordered abelian group containing $\overline{\nu}\overline{K}$. Then,
			\[ \overline{\nu}_{a,\g} =\overline{\nu}_{a^\prime,\g^\prime} \text{ if and only if } \g=\g^\prime \text{ and } \overline{\nu}(a-a^\prime)\geq\g. \]
		\end{Lemma}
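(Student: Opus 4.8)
The plan is to prove both directions by unwinding the definition of $\nu_{a,\g}$ in terms of Taylor expansions, together with the preceding lemma's displayed observation that $\g = \max \nu_{a,\g}(x-K)$.

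First I would prove the ``if'' direction. Assume $\g = \g'$ and $\overline{\nu}(a-a')\geq\g$. Given a polynomial $f(x)\in\overline{K}[x]$, I would expand it both around $a$ and around $a'$, say $f(x) = \sum_i c_i(x-a)^i = \sum_i c_i'(x-a')^i$. Writing $x - a' = (x-a) + (a - a')$ and applying the binomial theorem, each $c_j'$ is a $\ZZ$-linear combination of terms $c_i (a-a')^{i-j}\binom{i}{j}$ with $i\geq j$; since $\overline{\nu}(a-a')\geq\g$, every such term has value $\geq \nu c_i + (i-j)\g$, so $\overline{\nu} c_j' + j\g \geq \min_i(\nu c_i + i\g) = \nu_{a,\g} f$. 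Taking the minimum over $j$ gives $\nu_{a',\g} f \geq \nu_{a,\g} f$. The reverse inequality follows by the symmetric argument, using that $\overline{\nu}(a'-a) = \overline{\nu}(a-a') \geq \g = \g'$. Hence $\nu_{a,\g} f = \nu_{a',\g'} f$ for all $f$, and since both are valuations extended canonically to $\overline{K}(x)$, they coincide there.

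Next I would prove the ``only if'' direction. Suppose $\overline{\nu}_{a,\g} = \overline{\nu}_{a',\g'}$. Applying the equality to the linear polynomial $x - a$ gives $\g = \overline{\nu}_{a,\g}(x-a) = \overline{\nu}_{a',\g'}(x-a) = \min\{\g', \overline{\nu}(a-a')\} \leq \g'$; by symmetry $\g' \leq \g$, so $\g = \g'$. Then, again from $\g = \overline{\nu}_{a',\g'}(x-a) = \min\{\g', \overline{\nu}(a'-a)\}$ and $\g = \g'$, we must have $\overline{\nu}(a-a')\geq\g$, which is the remaining claim. (Alternatively one can invoke the displayed identity $\g = \max \overline{\nu}_{a',\g'}(x-\overline{K})$ together with the fact that $\overline{\nu}_{a',\g'}(x-a)$ must then be $\leq\g'$ with equality forcing $\overline{\nu}(a-a')\geq\g'$.)

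I do not expect a genuine obstacle here; the argument is essentially a bookkeeping exercise with the binomial expansion and the defining formula for $\nu_{a,\g}$. The only point requiring a little care is checking that the change-of-center estimate goes through cleanly in both directions — i.e. that the inequality $\overline{\nu}(a-a')\geq\g$ is exactly what is needed to guarantee that re-expanding does not lower any valuation below $\nu_{a,\g}f$ — and that one is entitled to deduce equality of the valuations on all of $\overline{K}(x)$ from equality on $\overline{K}[x]$, which is immediate since a valuation on the polynomial ring extends uniquely to the fraction field. Since this lemma is quoted from Proposition 3 of [\ref{AP sur une classe}], I would keep the write-up brief and refer there for the details omitted above.
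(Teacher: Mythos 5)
Your argument is correct. The paper itself gives no proof of this lemma --- it is quoted from Proposition 3 of [\ref{AP sur une classe}] --- and your write-up supplies exactly the standard argument: the change-of-center estimate via the binomial expansion (with the hypothesis $\overline{\nu}(a-a^\prime)\geq\g$ being precisely what keeps $\nu_{a^\prime,\g}f\geq\nu_{a,\g}f$, and symmetry giving equality), and evaluation at the linear polynomial $x-a$ for the converse. The only cosmetic slip is that to pass from the expansion around $a$ to the one around $a^\prime$ you should substitute $x-a=(x-a^\prime)+(a^\prime-a)$ rather than the reverse, but the resulting formula for $c_j^\prime$ that you actually use is the right one (up to sign, which is irrelevant for values), so nothing breaks.
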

	
		The above lemma motivates the following definition:
		\begin{Definition}
			A pair $(a,\g)$ is said to be a \textbf{minimal pair of definition} for $\o$ if $(a,\g)$ is a pair of definition for $\o$ and for any $b\in\overline{K}$,
			\[ \overline{\nu}(a-b)\geq\g\Longrightarrow [K(a):K] \leq [K(b):K]. \]
		\end{Definition}
	    In other words, $a$ is of minimal degree over $K$ with the property that $(a,\g)$ is a pair of definition for $\o$. It follows from Theorem 3.11 of [\ref{Kuh value groups residue fields rational fn fields}] that every valuation transcendental extension admits a pair of definition. The well-ordering principle then implies that every valuation transcendental extension admits a minimal pair of definition.

		\pars Take a polynomial $f(x)\in K[x]$. We define
		\[ \d(f) := \max\{ \overline{\o}(x-a) \mid f(a)=0 \}. \]
		This notion has been introduced by Novacoski in [\ref{Novacoski key poly and min pairs}]. It has been observed in [\ref{Novacoski key poly and min pairs}] that for a fixed valuation transcendental extension $\o$, the value $\d(f)$ is independent of the extension $\overline{\o}$ of $\o$ to $\overline{K}(x)$. A root $a$ of $f$ such that $\d(f) = \overline{\o}(x-a)$ will be referred to as a \textbf{maximal root} of $f$. A monic polynomial $Q(x)\in K[x]$ is said to be a \textbf{key polynomial} of $\o$ over $K$ if for any $f(x)\in K[x]$, we have that
		\[ \deg f < \deg Q \Longrightarrow \d(f) < \d(Q). \]
		Now take a monic polynomial $Q(x)\in K[x]$. Then any polynomial $f(x) \in K[x]$ has a unique expansion, called the \textbf{$Q$-expansion}, which is of the form 
		\begin{equation}
			f = f_0 + f_1 Q + \dotsc + f_r Q^r,
		\end{equation}	
		where $f_i(x) \in K[x]$ such that $\deg f_i < \deg Q$. We have the map $\nu_Q : K[x] \longrightarrow \o K(x)$ given by
		\[ \nu_Q f:= \min\{ \o f_i + i \o Q \}. \]	
		We can extend $\nu_Q$ by setting $\nu_Q (f/g) := \nu_Q f - \nu_Q g$. A sufficient condition for $\nu_Q$ to be a valuation on $K(x)$ is that $Q$ be a key polynomial of $\o$ over $K$ [Proposition 2.6, \ref{Nova Spiva key pol pseudo convergent}], but it is not a necessary condition [Proposition 2.3, \ref{Novacoski key poly and min pairs}]. The connection between minimal pairs of definition and key polynomials is explored in Theorem 1.1 of [\ref{Novacoski key poly and min pairs}]: 
		
		\begin{Theorem}
			Take a monic irreducible polynomial $Q(x)\in K[x]$. Take a root $a$ of $Q(x)$ such that $\d(Q)=\overline{\o}(x-a)$. Then $(a, \d(Q))$ is a minimal pair of definition for $\o$ if and only if $\o = \nu_Q$.
		\end{Theorem}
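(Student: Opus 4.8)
The plan is to compare three valuations on $K(x)$: the given $\o$, the truncation $\nu_Q$, and the restriction of $\overline{\nu}_{a,\g}$, where $\g:=\d(Q)=\overline{\o}(x-a)$ and $n:=\deg Q=[K(a):K]$ (note that $Q$ is the minimal polynomial of $a$). The basic fact I would use throughout is that, for the $Q$-expansion $f=\sum_i f_i Q^i$ of any $f\in K[x]$, the triangle inequality gives $\o f\ge\min_i(\o f_i+i\o Q)=\nu_Q f$; hence $\nu_Q\le\o$ on $K(x)$ unconditionally, and $\o=\nu_Q$ is equivalent to the statement that $\o(\sum_i f_iQ^i)=\min_i\o(f_iQ^i)$ for every $Q$-expansion, i.e.\ that the $Q$-homogeneous components never cancel.

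\emph{If $(a,\g)$ is a minimal pair of definition, then $\o=\nu_Q$.} Here $\overline{\o}=\overline{\nu}_{a,\g}$. First I would record that minimality controls the low-degree components: for every $h\in K[x]$ with $\deg h<n$ one has $\overline{\o}(h)=\overline{\nu}(h(a))$. Indeed each root $b$ of $h$ has $[K(b):K]<n$, so $\overline{\nu}(a-b)\ge\g$ would force (via the lemma relating pairs of definition, since then $\overline{\nu}_{a,\g}=\overline{\nu}_{b,\g}=\overline{\o}$) that $(b,\g)$ is a pair of definition with smaller-degree first coordinate, contradicting minimality; so $\overline{\nu}(a-b)<\g=\overline{\o}(x-a)$, whence $\overline{\o}(x-b)=\overline{\nu}(a-b)$, and multiplying over the roots of $h$ gives the claim. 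Thus $\o(f_iQ^i)=\overline{\nu}(f_i(a))+i\,\overline{\o}(Q)$ for each $i$ with $f_i\ne0$. The remaining and principal point is that these values are pairwise distinct. If $\o$ is value transcendental, then $\g$ is non-torsion modulo $\overline{\nu}\overline{K}$, and $\overline{\o}(Q)=\overline{\nu}_{a,\g}(Q)$ contains $\g$ with some multiplicity $r\ge1$ while $\overline{\nu}(f_i(a))\in\overline{\nu}\overline{K}$; hence $\o(f_iQ^i)\equiv ir\g$ modulo $\overline{\nu}\overline{K}$, and these classes are distinct for distinct $i$. If $\o$ is residue transcendental, pick $c\in\overline{K}$ with $\overline{\nu}c=\g$ and let $\xi$ be the residue of $(x-a)/c$ under $\overline{\o}=\overline{\nu}_{a,\g}$, which is transcendental over $\overline{K}\overline{\nu}$; the relation $\overline{\o}(h)=\overline{\nu}(h(a))$ shows that each $h$ of degree $<n$ reduces, after rescaling, to a nonzero element of $\overline{K}\overline{\nu}$, whereas $Q$ reduces to a non-constant polynomial $g(\xi)$, so $f_iQ^i$ reduces to a nonzero multiple of $g(\xi)^i$; as $1,g(\xi),g(\xi)^2,\dots$ are $\overline{K}\overline{\nu}$-linearly independent, no cancellation can occur. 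In both cases $\o f=\min_i\o(f_iQ^i)=\nu_Q f$, so $\o=\nu_Q$.

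\emph{If $\o=\nu_Q$, then $(a,\g)$ is a minimal pair of definition.} I would first show $(a,\g)$ is a pair of definition, i.e.\ $\overline{\nu}_{a,\g}|_{K(x)}=\o$: one always has $\overline{\nu}_{a,\g}\le\overline{\o}$ on $\overline{K}[x]$ (compare linear factors), and $\d(Q)=\overline{\o}(x-a)$ forces $\overline{\o}(x-a_j)=\overline{\nu}_{a,\g}(x-a_j)$ for every root $a_j$ of $Q$, hence $\overline{\nu}_{a,\g}(Q)=\o(Q)$; granting in addition that $\overline{\nu}_{a,\g}(h)=\o(h)$ for every $h\in K[x]$ of degree $<n$, expansion of $f=\sum_i f_iQ^i$ gives $\overline{\nu}_{a,\g}(f)\ge\min_i(\overline{\nu}_{a,\g}(f_i)+i\overline{\nu}_{a,\g}(Q))=\min_i(\o f_i+i\o Q)=\nu_Q f=\o f$, and with the reverse inequality $\overline{\nu}_{a,\g}|_{K(x)}=\o$. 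The auxiliary equality $\overline{\nu}_{a,\g}(h)=\o(h)$ for $\deg h<n$ is automatic in the value-transcendental case (distinct cosets modulo $\overline{\nu}\overline{K}$ preclude cancellation in $\overline{\o}(\sum_k c_k(x-a)^k)$), and in the residue-transcendental case must be extracted from $\o=\nu_Q$ by the residue computation of the previous paragraph; this is the first of two delicate steps of the converse.

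Finally, for minimality: suppose $b\in\overline{K}$ with $\overline{\nu}(a-b)\ge\g$ but $[K(b):K]<n$. By the lemma on pairs of definition $\overline{\nu}_{b,\g}=\overline{\nu}_{a,\g}$, so $(b,\g)$ is also a pair of definition; replacing $b$ by an element of minimal degree makes $(b,\g)$ a minimal pair, and the forward implication already proved gives $\o=\nu_P$ for $P$ the minimal polynomial of $b$, with $\deg P<\deg Q$. It then suffices to see that $\nu_P=\o=\nu_Q$ is impossible when $P,Q$ are irreducible with $\d(P)=\g=\d(Q)$ and $\deg P<\deg Q$; I expect this to be the hardest point of the whole argument, and would handle it by examining the $P$-expansion of $Q$ and using that $\o=\nu_P$ already realizes the full value group and residue field of $(K(x)|K,\o)$, so that no genuine new contribution can come from the extra power of $P$ present in $Q$. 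This shows $a$ has minimal degree, and completes the equivalence.
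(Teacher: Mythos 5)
First, note that the paper does not prove this statement itself: it is quoted verbatim from Theorem 1.1 of [\ref{Novacoski key poly and min pairs}], so there is no in-paper proof to compare against. Judged on its own terms, your forward implication is essentially complete and correct: minimality gives $\o h=\overline{\nu}(h(a))$ for $\deg h<\deg Q$, and your two no-cancellation arguments (distinct cosets of $i\,\o Q$ modulo $\overline{\nu}\overline{K}$ when $\g\notin\overline{\nu}\overline{K}$; linear independence of the powers $g(\xi)^i$ over $\overline{K}\overline{\nu}$ when $\g\in\overline{\nu}\overline{K}$) are sound and yield $\o f=\min_i\o(f_iQ^i)=\nu_Q f$.

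The converse, however, is not proved: the two steps you yourself flag as ``delicate'' are precisely the substance of that direction, and neither is carried out. (a) To show $(a,\g)$ is a pair of definition you must rule out the existence of $b\in\overline{K}$ with $[K(b):K]<\deg Q$, $\overline{\nu}(a-b)=\g$ and $\overline{\o}(x-b)>\g$ (equivalently, that no polynomial of degree $<\deg Q$ has $\d$-value exceeding $\d(Q)$); you assert this ``must be extracted from $\o=\nu_Q$ by the residue computation'' but give no argument, and it is not automatic — note also that the correct dichotomy here is $\g\in\overline{\nu}\overline{K}$ versus $\g\notin\overline{\nu}\overline{K}$, not residue- versus value-transcendental, since a priori $\d(Q)$ could be torsion modulo $\nu K$ even when $\o$ is value transcendental, in which case $(a,\d(Q))$ cannot be a pair of definition and one must separately show $\nu_Q\neq\o$. (b) For minimality you reduce to showing that $\nu_P=\nu_Q=\o$ is impossible for irreducible $P$ with $\deg P<\deg Q$ and $\d(P)=\d(Q)$, and you explicitly describe this as the hardest point and offer only a one-sentence plan (``examine the $P$-expansion of $Q$\dots no genuine new contribution can come from the extra power of $P$''). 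That is a research direction, not a proof; as the paper itself remarks, $\nu_Q$ can be a valuation without $Q$ being a key polynomial, so the interaction between truncations at polynomials of different degrees is exactly where the difficulty lives. Until (a) and (b) are supplied, the equivalence is only proved in one direction.
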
	 
		
		\begin{Corollary}\label{corollary to Novacoski}
			Take a minimal pair of definition $(a,\g)$ for $\o$ and consider the minimal polynomial $Q(x)$ of $a$ over $K$. Then $\o=\nu_Q$. Further, $Q$ is a key polynomial for $\o$.
		\end{Corollary}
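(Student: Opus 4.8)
The plan is to obtain both assertions as essentially formal consequences of the preceding theorem of Novacoski, the only genuine input being an identification of $\d(Q)$. Set $n := \deg Q = [K(a):K]$. Since $Q$ is a minimal polynomial it is monic and irreducible, so in order to invoke that theorem it suffices to check that $\d(Q) = \overline{\o}(x-a)$; and because $\overline{\o} = \overline{\nu}_{a,\g}$ we have $\overline{\o}(x-a) = \g$, so the task reduces to showing $\d(Q) = \g$.

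To do this, recall that the roots of $Q$ in $\overline{K}$ are precisely the conjugates of $a$. For a conjugate $a'$, the expansion $x - a' = (a - a') + (x - a)$ is the $(x-a)$-expansion of $x - a'$, so by the very definition of $\overline{\nu}_{a,\g}$ one gets the exact equality
\[ \overline{\o}(x - a') = \min\{\g,\ \overline{\nu}(a - a')\} \leq \g, \]
with equality for $a' = a$. Hence $\d(Q) = \g$ and $a$ is a maximal root of $Q$. Now the hypothesis that $(a,\g) = (a,\d(Q))$ is a minimal pair of definition for $\o$, together with the cited theorem, yields $\o = \nu_Q$.

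For the last statement I would verify the defining inequality of a key polynomial directly rather than trying to route through the (insufficient) fact that $\nu_Q$ is a valuation. Let $f \in K[x]$ with $\deg f < n$; if $n = 1$ there is nothing to prove, so assume $n \geq 2$. Let $b \in \overline{K}$ be any root of $f$, so $[K(b):K] \leq \deg f < n = [K(a):K]$. If $\overline{\nu}(a - b) \geq \g$ held, then by the definition of a minimal pair of definition we would have $[K(a):K] \leq [K(b):K]$, a contradiction; hence $\overline{\nu}(a-b) < \g$. The identity above then gives $\overline{\o}(x - b) = \min\{\g,\ \overline{\nu}(a-b)\} = \overline{\nu}(a-b) < \g = \d(Q)$. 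As $f$ has only finitely many roots, taking the maximum yields $\d(f) < \d(Q)$, which is exactly the condition for $Q$ to be a key polynomial of $\o$ over $K$.

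I do not anticipate a real obstacle here; the argument is a repackaging of Novacoski's theorem and the definition of minimality. The one point deserving care is that the equality $\overline{\nu}_{a,\g}(x - b) = \min\{\g, \overline{\nu}(a-b)\}$ holds on the nose for every $b \in \overline{K}$, and not merely for $b \in K$ as recorded in the preliminaries — but this is immediate from the definition of $\overline{\nu}_{a,\g}$, since the $(x-a)$-expansion of $x - b$ is just $(a-b) + (x-a)$.
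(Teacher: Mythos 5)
Your proof is correct and follows essentially the same route as the paper: identify $\d(Q)=\g$ via the formula $\overline{\nu}_{a,\g}(x-b)=\min\{\g,\overline{\nu}(a-b)\}$, invoke Novacoski's theorem for $\o=\nu_Q$, and use minimality of $(a,\g)$ to force $\overline{\o}(x-b)<\g$ for every root $b$ of a polynomial of smaller degree. The paper phrases this last step as a contradiction at a maximal root of $f$ rather than a direct bound over all roots, but the content is identical.
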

		
		\begin{proof}
			By definition, we can take an extension $\overline{\o}$ of $\o$ to $\overline{K}(x)$ which has $(a,\g)$ as a minimal pair of definition, that is, $\overline{\o} = \overline{\nu}_{a,\g}$. It follows that
			\[ \overline{\o}(x-a)=\g=\max\overline{\o}(x-\overline{K}). \]
			The maximality of $\g$ then implies that $\d(Q)=\g$. It now follows from Theorem 1.1 of [\ref{Novacoski key poly and min pairs}] that $\o=\nu_Q$. Take a polynomial $f(x)\in K[x]$ such that $\deg f<\deg Q$. The maximality of $\g$ implies that $\d(f)\leq \d(Q)$. Suppose that $\d(f)=\d(Q)$. Then there exists some root $z$ of $f$ such that $\overline{\o}(x-z)=\g$. However, this implies that $(z,\g)$ is also a pair of definition for $\o$, which contradicts the minimality of $(a,\g)$. Hence $\d(f)<\d(Q)$ and thus $Q(x)$ is a key polynomial for $\o$.
		\end{proof}
	
%-------------------------------------------------------------------------------------------------------	
	
\section{Minimal pairs of definition for $\o$} \label{section min pair}	
	\underline{Throughout the rest of the paper}, $\o$ denotes some valuation transcendental extension of $\nu$ to $K(x)$, $\overline{\nu}$ denotes some extension of $\nu$ to a fixed algebraic closure $\overline{K}$ of $K$, and $\overline{\o}$ denotes some common extension of $\o$ and $\overline{\nu}$ to $\overline{K}(x)$.
	
	\pars Minimal pairs of definition were used by Alexandru, Popescu and Zaharescu [\ref{AP sur une classe}, \ref{APZ characterization of residual trans extns}, \ref{APZ2 minimal pairs}] to give a very satisfactory description of residue transcendental extensions. We first provide a brief summary of some of their important results. A proof of these statements can be obtained in Thereom 2.1 of [\ref{APZ characterization of residual trans extns}].
	
	\begin{Remark}\label{Remark r.t. value grp res field}
		Assume that $\o$ is a residue transcendental extension of $\nu$ to $K(x)$ with a minimal pair of definition $(a,\g)$. Take the minimal polynomial $Q(x)$ of $a$ over $K$. Take a polynomial $g(x)\in K[x]$ such that $\deg g < \deg Q$. Then 
		\[ \o g=\overline{\nu}g(a). \]
		Take a polynomial $f(x) \in K[x]$ with the $Q$-expansion $f = f_0 + f_1 Q + \dotsc + f_r Q^r$. Then, 
		\[ \o f = \min \{ \overline{\nu}f_i(a) + i \o Q \}. \]
		We have that
		\[ \o K(x) = \overline{\nu}K(a) + \ZZ\o Q  \text{ and } (\o K(x):\overline{\nu}K(a)) = e, \]
		where $e$ is the least natural number such that $e\o Q\in \overline{\nu}K(a)$.
		
		If $h_1,h_2$ are coprime polynomials over $K$, then we define the \textbf{order} of $\frac{h_1}{h_2}$ as 
		\[ \ord(\frac{h_1}{h_2}) := \max\{\deg h_1,\deg h_2\}. \]    
		Now take $g(x)\in K[x]$ such that $\deg g< \deg Q$ and $\overline{\nu}g(a) = \o g = \o Q^e$. Then $\frac{Q^e}{g}$ is the element of $\mathcal{O}_{K(x)}$ of the smallest order such that $\frac{Q^e}{g}\o$ is transcendental over $K\nu$. Further, $K(a)\overline{\nu}$ can be canonically identified with the relative algebraic closure of $K\nu$ in $K(x)\o$. Moreover,
		\[ K(x)\o = K(a)\overline{\nu}(\frac{Q^e}{g}\o). \]
		Now take some $h(x)\in K[x]$ such that $\deg h < \deg Q$ and $\o h = -e\o Q$. Then $\o Q^eh = 0$ and hence $(Q^eh)\o \in K(x)\o$. Further, observe that $\o (gh)=0$ and hence 
		\[ (hQ^e)\o = ((gh)\o)(\frac{Q^e}{g}\o). \]
		Suppose that $(gh)\o$ is transcendental over $K\nu$. It follows from Proposition 1.1 of [\ref{APZ2 minimal pairs}] that there is a root $b$ of $gh\in K[x]$ such that $(b,\g)$ is a pair of definition for $\o$. So either $g(b) =0$ or $h(b)=0$. However, the fact that $\deg g, \deg h < \deg Q$ and the minimality of $(a,\g)$ yields a contradiction to the above assertion. Hence $(gh)\o$ is algebraic over $K\nu$. The fact that $K(a)\overline{\nu}$ is the relative algebraic closure of $K\nu$ in $K(x)\o$ now implies that $(gh)\o\in K(a)\overline{\nu}$. It follows that $K(a)\overline{\nu}(\frac{Q^e}{g}\o) = K(a)\overline{\nu}((hQ^e)\o)$. We have thus obtained that
		\[ K(a)\overline{\nu}(\frac{Q^e}{g}\o) = K(x)\o = K(a)\overline{\nu}((hQ^e)\o). \] 
	\end{Remark}
	
	We now show that a value transcendental extension is completely described by a minimal pair of definition. Recall that a minimal pair of definition always exists for a valuation transcendental extension. 

\begin{Lemma}\label{Lemma value transcendental extension is described by min pair}
	Assume that $\o$ is value transcendental. Take a minimal pair of definition $(a,\g)$ for $\o$ and the minimal polynomial $Q(x)$ of $a$ over $K$. Then $\o Q \notin \overline{\nu} \overline{K}$. Take a polynomial $g(x)\in K[x]$ such that $\deg g < \deg Q$. Then $\o g=\overline{\nu}g(a)$. Take a polynomial $f(x) \in K[x]$ with the $Q$-expansion $f = f_0 + f_1 Q + \dotsc + f_r Q^r$. Then, 
	\[ \o f = \min \{ \overline{\nu}f_i(a) + i \o Q \}. \]
\end{Lemma}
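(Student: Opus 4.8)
The plan is to fix, once and for all, an extension $\overline{\o}$ of $\o$ to $\overline{K}(x)$ with $\overline{\o}=\overline{\nu}_{a,\g}$ (possible since $(a,\g)$ is a pair of definition for $\o$), and to exploit two structural facts repeatedly. First, $\g\notin\overline{\nu}\overline{K}$ because $\o$ is value transcendental. Second, $\overline{\nu}\overline{K}$ is a divisible, torsion-free group, so that no nonzero integer multiple of an element of $\overline{\o}\overline{K}(x)$ lying outside $\overline{\nu}\overline{K}$ can land back inside $\overline{\nu}\overline{K}$; in particular $m\g\notin\overline{\nu}\overline{K}$ for every integer $m\geq 1$. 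These reductions will also need the minimality of $(a,\g)$, and the latter is the real content.

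To prove $\o Q\notin\overline{\nu}\overline{K}$, I would factor $Q(x)=\prod_{j=1}^{n}(x-a_j)$ over $\overline{K}$ with $a_1=a$, so that $\o Q=\overline{\o}Q=\sum_{j}\overline{\nu}_{a,\g}(x-a_j)=\sum_{j}\min\{\overline{\nu}(a-a_j),\g\}$. For each $j$ we have $\overline{\nu}(a-a_j)\in\overline{\nu}\overline{K}$ while $\g\notin\overline{\nu}\overline{K}$, so $\overline{\nu}(a-a_j)\neq\g$; thus each summand is either $\g$ (when $\overline{\nu}(a-a_j)>\g$, which holds for $j=1$) or an element of $\overline{\nu}\overline{K}$ (when $\overline{\nu}(a-a_j)<\g$). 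Hence $\o Q=k\g+\delta$ with $k\geq 1$ and $\delta\in\overline{\nu}\overline{K}$, and so $\o Q\notin\overline{\nu}\overline{K}$ by the observation above.

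The heart of the argument is the identity $\o g=\overline{\nu}g(a)$ for $g\in K[x]$ with $\deg g<\deg Q=n$, and this is where minimality is used essentially. Writing $g(x)=c\prod_{k}(x-b_k)$ over $\overline{K}$ with $c\in K$, one computes $\o g=\overline{\nu}c+\sum_{k}\min\{\overline{\nu}(a-b_k),\g\}$ and $\overline{\nu}g(a)=\overline{\nu}c+\sum_{k}\overline{\nu}(a-b_k)$, so it suffices to show $\overline{\nu}(a-b_k)\leq\g$ for every root $b_k$ of $g$, i.e.\ (since $\overline{\nu}(a-b_k)\neq\g$) to rule out $\overline{\nu}(a-b_k)\geq\g$. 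If $\overline{\nu}(a-b_k)\geq\g$ for some root $b_k$ of $g$, then minimality of $(a,\g)$ forces $[K(a):K]\leq[K(b_k):K]$; but the minimal polynomial of $b_k$ over $K$ divides $g$, so $[K(b_k):K]\leq\deg g<n=[K(a):K]$, a contradiction. I expect this step, the translation of ``$(b_k,\g)$ is not a pair of definition'' into the degree inequality, to be the main obstacle; the rest is bookkeeping.

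Finally, for the $Q$-expansion formula with $f=f_0+f_1Q+\dotsb+f_rQ^r$ and $\deg f_i<n$: the previous step gives $\o(f_iQ^i)=\o f_i+i\o Q=\overline{\nu}f_i(a)+i\o Q$ for each $i$ with $f_i\neq 0$. For $i\neq j$ the difference of two such values equals $(\overline{\nu}f_i(a)-\overline{\nu}f_j(a))+(i-j)\o Q$, and since $(i-j)\o Q\notin\overline{\nu}\overline{K}$ (from $\o Q\notin\overline{\nu}\overline{K}$ together with divisibility and torsion-freeness), this difference lies outside $\overline{\nu}\overline{K}$, hence is nonzero; so the values $\overline{\nu}f_i(a)+i\o Q$ are pairwise distinct. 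The standard fact that an ultrametric valuation of a sum whose term-values are pairwise distinct equals the minimum of those term-values then yields $\o f=\min\{\overline{\nu}f_i(a)+i\o Q\}$, completing the proof.
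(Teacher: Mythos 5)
Your proposal is correct, and the core step --- using minimality of $(a,\g)$ plus the degree bound $[K(b_k):K]\leq\deg g<\deg Q=[K(a):K]$ to force $\overline{\nu}(a-b_k)<\g$ for every root of $g$, hence $\o g=\overline{\nu}g(a)$ --- is exactly the paper's argument, as is your factorization argument for $\o Q\notin\overline{\nu}\overline{K}$. Where you genuinely diverge is the $Q$-expansion formula. The paper obtains $\o f=\min\{\o f_i+i\o Q\}$ by first invoking Corollary \ref{corollary to Novacoski} (i.e.\ Novacoski's theorem that a minimal pair yields $\o=\nu_Q$), and only then substitutes $\o f_i=\overline{\nu}f_i(a)$; you instead prove $\o Q\notin\overline{\nu}\overline{K}$ \emph{first} and deduce, via divisibility of $\overline{\nu}\overline{K}$ and torsion-freeness of the ambient ordered group, that the monomial values $\overline{\nu}f_i(a)+i\o Q$ are pairwise distinct, so the minimum formula follows from the ultrametric inequality alone. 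Your route is more elementary and self-contained (no key-polynomial machinery is needed for this step), but it exploits the value transcendental hypothesis in an essential way: in the residue transcendental case $\o Q$ is torsion modulo $\nu K$, the monomial values can collide, and one really does need $\o=\nu_Q$ from the APZ/Novacoski theory, which is why the paper's uniform appeal to that corollary has some point. Both arguments are complete and correct.
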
	
	
\begin{proof}
	It follows from Corollary \ref{corollary to Novacoski} that $\o = \nu_Q$. So for a polynomial $f(x) \in K[x]$ with the given $Q$-expansion, we have
	\begin{equation}\label{eqn omega f in Q expansion}
		\o f = \min\{ \o f_i + i\o Q \}.
	\end{equation}
	Take $g(x) \in K[x]$ with $\deg g < \deg Q$. Write $g(x) = c(x-b_1) \dotsc (x-b_m)$ where $b_i \in \overline{K}$ and $c\in K$. If $\overline{\nu}(a-b_i) > \g$ for some $i$, then $(b_i, \g)$ is a pair of definition for $\overline{\o}$. The minimality of $(a,\g)$ then implies that $[K(b_i):K] \geq [K(a):K]$. But, $[K(b_i):K] \leq \deg g < \deg Q = [K(a):K]$. Hence $\overline{\nu}(a-b_i) < \g$ for each $i$, and thus, $\overline{\o}(x-b_i) = \overline{\nu}(a-b_i) \in \overline{\nu}\overline{K}$. It follows that, 
	\begin{equation}\label{eqn deg g < deg Q}
		\deg g < \deg Q \Longrightarrow \o g = \overline{\nu} g(a)\in \overline{\nu}\overline{K}.
	\end{equation}
	From (\ref{eqn omega f in Q expansion}) and (\ref{eqn deg g < deg Q}) it follows that 
	\[ \o f = \min \{ \overline{\nu}f_i(a) + i \o Q \}. \]
	Finally, we express $Q(x) = (x-a)(x-a_2) \dotsc (x-a_n)$. We have $\overline{\o}(x-a) = \g\notin\overline{\nu}\overline{K}$. Further, for each $i$ we have that $\overline{\o}(x-a_i) = \min\{\g, \overline{\nu}(a-a_i) \}$. Hence $\o Q \notin \overline{\nu}\overline{K}$.
\end{proof}	

\begin{Remark}\label{Remark value group res field}
	Take notations and assumptions as in Lemma \ref{Lemma value transcendental extension is described by min pair}. Then $\o K(x) = \overline{\nu}K(a)\dirsum\ZZ\o Q$ and $K(x)\o = K(a)\overline{\nu}$. The equality of the value groups is straightforward. To see the equality of the residue fields, we first observe that $(K(a,x)|K(a),\overline{\o})$ is a pure value transcendental extension. Consequently, $K(a,x)\overline{\o} = K(a)\overline{\nu}$. It follows that
	\[  K(x)\o\subseteq K(a,x)\overline{\o} = K(a)\overline{\nu}. \]
	Now take $g(x)\in K[x]$ with $\deg g < \deg Q$ and $\o g = 0$. Consider the expansion 
	\[ g(x) = g(a) + \sum_{i=1}^{m} c_i (x-a)^i. \]
	Observe that all the monomials in the above expansion have distinct values. It now follows from Lemma \ref{Lemma value transcendental extension is described by min pair} and the triangle inequality that
	\[ \o g=0=\overline{\nu}g(a) < \overline{\o} [c_i (x-a)^i] \text{ for all }i. \] 
	Consequently,
	\[ \overline{\o}(g(x)-g(a)) = \min\{ \overline{\o} [c_i(x-a)^i] \mid i=1,\dotsc,m \} >0. \]
	We have thus obtained that $g(x)\o = g(a)\overline{\nu}$. Since this holds for any arbitrary $g(a)\overline{\nu}\in K(a)\overline{\nu}$, we have
	\[ K(a)\overline{\nu}\subseteq K(x)\o. \]
	Hence we have arrived at the equality
	\[ K(x)\o = K(a)\overline{\nu}. \]
\end{Remark}

In Theorem 3.1 of [\ref{APZ2 minimal pairs}] it is shown that every residue transcendental extension of $\nu$ to $K(x)$ has a minimal pair of definition $(a,\g)\in\overline{K}\times\overline{\nu}\overline{K}$ where $a$ is separable over $K$. However, this is not necessarily true for a value transcendental extension.

\begin{Lemma}
	Assume that $(a,\g)$ is a minimal pair of definition for $\o$ and $\g> \overline{\nu}\overline{K}$. Then $\o$ is value transcendental and $(a,\g)$ is the unique pair of definition for $\o$. 
\end{Lemma}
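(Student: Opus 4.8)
The plan is straightforward. The value-transcendence assertion needs nothing beyond the recorded dichotomy: since $(a,\g)$ is a pair of definition for $\o$ we have $\overline{\o}=\overline{\nu}_{a,\g}$, and the hypothesis $\g>\overline{\nu}\overline{K}$ gives in particular $\g\notin\overline{\nu}\overline{K}$, which by Theorem 3.11 of [\ref{Kuh value groups residue fields rational fn fields}] is exactly the criterion for $\o$ to be value transcendental. So the substance of the lemma is the uniqueness statement.

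For uniqueness I would argue as follows. Let $(a^\prime,\g^\prime)$ be any pair of definition for $\o$. In the running notation this means $\overline{\o}=\overline{\nu}_{a^\prime,\g^\prime}$, so $\overline{\nu}_{a,\g}=\overline{\o}=\overline{\nu}_{a^\prime,\g^\prime}$, and both sides restrict to $\overline{\nu}$ on $\overline{K}$; hence the relating lemma (Proposition 3 of [\ref{AP sur une classe}], recalled above) applies and yields $\g^\prime=\g$ together with $\overline{\nu}(a-a^\prime)\geq\g$. Now $a-a^\prime\in\overline{K}$, so $\overline{\nu}(a-a^\prime)$ lies in $\overline{\nu}\overline{K}\cup\{\infty\}$; since $\g$ strictly exceeds every element of $\overline{\nu}\overline{K}$, the inequality $\overline{\nu}(a-a^\prime)\geq\g$ can only hold with $\overline{\nu}(a-a^\prime)=\infty$, that is $a=a^\prime$. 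Thus $(a^\prime,\g^\prime)=(a,\g)$.

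There is no real obstacle here beyond getting the bookkeeping right, but two remarks seem worth making. First, the hypothesis genuinely used is $\g>\overline{\nu}\overline{K}$ and not merely $\g\notin\overline{\nu}\overline{K}$: were $\g$ value transcendental but not an upper bound for $\overline{\nu}\overline{K}$, one could have $\overline{\nu}(a-a^\prime)\geq\g$ with $a^\prime\neq a$ of the same degree over $K$, and uniqueness would fail. Second, the reason the pair cannot be displaced by a conjugation (which would otherwise break uniqueness) is exactly that $\g=\overline{\o}(x-a)$ exceeds $\kras(a,K(x))$ when $a$ is separable over $K$ — differences of conjugates of $a$ lie in $\overline{K}$ and hence have value in $\overline{\nu}\overline{K}$ — so by Lemma \ref{Lemma Krasner} the element $a$ already lies in $K(x)^h$. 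If one prefers to avoid the relating lemma, the same conclusion follows from a $\d$-computation: for the minimal polynomials $Q,Q^\prime$ of $a,a^\prime$ one has $\g=\d(Q)\leq\g^\prime$ and $\g^\prime=\d(Q^\prime)\leq\g$, so $\g=\g^\prime$; then $\d(Q^\prime)=\g>\overline{\nu}\overline{K}$ forces some root $z_0$ of $Q^\prime$ to satisfy $\overline{\o}(x-z_0)=\g$, whence $\overline{\nu}(a-z_0)\geq\g$ and $z_0=a$, so $Q=Q^\prime$; finally $a$ is the only root of $Q$ at which $\overline{\o}$ takes the value $\g$, so $a^\prime=a$.
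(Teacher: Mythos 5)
Your proposal is correct and follows essentially the same route as the paper: value transcendence comes from $\g\notin\overline{\nu}\overline{K}$, and uniqueness from the relating lemma together with the fact that $\overline{\nu}(a-a^\prime)\in\overline{\nu}\overline{K}$ cannot satisfy $\overline{\nu}(a-a^\prime)\geq\g$ unless $a=a^\prime$. Your treatment is in fact slightly more careful than the paper's (which tacitly assumes the second pair has the same $\g$ and writes a strict inequality where the relating lemma gives $\geq$), and the extra remarks and the alternative $\d$-argument are sound but not needed.
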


\begin{proof}
	The fact that $\g$ is not contained in the divisible group $\overline{\nu}\overline{K}$ implies that $\g$ is torsion free modulo $\nu K$. Consequently, $\o$ is a value transcendental extension. Suppose that $(b,\g)$ is another pair of definition for $\o$ for some $b\in\overline{K}$ with $b\neq a$. Then $\overline{\nu}(a-b)>\g$, which contradicts our assumption. Hence $(a,\g)$ is the unique pair of definition for $\o$.
\end{proof}

\begin{Corollary}
	Assume that $K$ is not perfect. Then there exists a value transcendental extension $\o$ of $\nu$ to $K(x)$ with a unique pair of definition $(a,\g)$ such that $a$ is not separable over $K$.
\end{Corollary}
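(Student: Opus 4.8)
The plan is to write down a purely inseparable $a$ together with a sufficiently large value $\g$, and then to appeal directly to the previous Lemma. Since $K$ is not perfect we have $\ch K = p$ for some prime $p$, and we may fix an element $t\in K\setminus K^p$. The polynomial $x^p-t$ is irreducible over $K$, so for a root $a\in\overline{K}$ of it the extension $K(a)|K$ is purely inseparable of degree $p$; in particular $a\notin K$ and $a$ is not separable over $K$.

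Next I would produce $\g$. Let $\G:=\ZZ\times\overline{\nu}\overline{K}$ be ordered lexicographically with the first coordinate dominant, and identify $\overline{\nu}\overline{K}$ with the convex subgroup $\{0\}\times\overline{\nu}\overline{K}$ of $\G$; put $\g:=(1,0)\in\G$, so that $\g$ is larger than every element of $\overline{\nu}\overline{K}$, and in particular $\g\notin\overline{\nu}\overline{K}$. By Lemma 3.10 of [\ref{Kuh value groups residue fields rational fn fields}] the map $\overline{\o}:=\overline{\nu}_{a,\g}$ is a valuation on $\overline{K}(x)$, and by its very definition it restricts to $\overline{\nu}$ on $\overline{K}$. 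Hence, by Theorem 3.11 of [\ref{Kuh value groups residue fields rational fn fields}], the restriction $\o:=\overline{\o}|_{K(x)}$ is a valuation transcendental extension of $\nu$ to $K(x)$ with $(a,\g)$ as a pair of definition, and it is value transcendental precisely because $\g\notin\overline{\nu}\overline{K}$.

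It remains to check minimality and then quote the previous Lemma. If $b\in\overline{K}$ satisfies $\overline{\nu}(a-b)\geq\g$, then, since $a-b\in\overline{K}$ while $\g$ exceeds every element of $\overline{\nu}\overline{K}$, we must have $a=b$; thus the implication $\overline{\nu}(a-b)\geq\g\Rightarrow[K(a):K]\leq[K(b):K]$ holds trivially and $(a,\g)$ is a minimal pair of definition for $\o$. Now the previous Lemma applies with this $(a,\g)$ and $\g>\overline{\nu}\overline{K}$, and gives that $\o$ is value transcendental with $(a,\g)$ as its unique pair of definition; since $a$ is not separable over $K$ by construction, this is exactly the assertion. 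I do not anticipate any genuine obstacle here: the one step needing the usual (elementary) care is the irreducibility of $x^p-t$, which is needed only to guarantee $[K(a):K]=p>1$, i.e.\ that $a$ is honestly inseparable rather than vacuously separable.
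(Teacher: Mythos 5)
Your proposal is correct and follows essentially the same route as the paper: take an inseparable $a$, order $\ZZ\dirsum\overline{\nu}\overline{K}$ lexicographically, set $\g:=(1,0)>\overline{\nu}\overline{K}$, form $\overline{\nu}_{a,\g}$, and invoke the preceding lemma. Your version is slightly more explicit than the paper's (you construct $a$ concretely as a root of $x^p-t$ and verify minimality of the pair, which the paper leaves implicit), but the argument is the same.
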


\begin{proof}
	Consider the ordered abelian group $\G:= \ZZ\dirsum \overline{\nu}\overline{K}$ equipped with the lexicographic order. Take some $a\in \overline{K}\setminus K^\sep$ and set $\g:= (1,0)$. Take the extension $\overline{\o}:= \overline{\nu}_{a,\g}$ of $\overline{\nu}$ to $\overline{K}(x)$ and set $\o:= \overline{\o}|_{K(x)}$. Observe that $\g>\overline{\nu}\overline{K}$. The result now follows from the preceding lemma.
\end{proof}

\begin{Proposition}
	Assume that $\nu K$ is cofinal in $\o K(x)$. Then $\o$ has a minimal pair of definition $(b,\g)$ such that $b$ is separable over $K$.
\end{Proposition}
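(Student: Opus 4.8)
The plan is to keep the value $\g$ fixed and replace the first coordinate of a minimal pair of definition by a separable element of the same degree over $K$. Concretely, take any minimal pair of definition $(a,\g)$ for $\o$ (one exists) and fix an extension $\overline{\o}$ of $\o$ to $\overline{K}(x)$ with $\overline{\o}=\overline{\nu}_{a,\g}$. If $a$ is separable over $K$ we are done with $b=a$, so assume $a\notin(K(a)|K)^\sep$; then $\ch K=p>0$ and the minimal polynomial $Q(x)$ of $a$ over $K$, of degree $n:=[K(a):K]$, is an irreducible polynomial in $x^p$, so $Q'(x)=0$. The candidate replacement polynomial will be
\[ P(x):=Q(x)-\lambda x\in K[x] \]
for a carefully chosen $\lambda\in K^\times$. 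It is monic of degree $n$ (note $n\geq p\geq 2$), and $P'(x)=Q'(x)-\lambda=-\lambda\neq 0$, so as soon as I know that $P$ is irreducible it will follow that $P$, hence its root, is separable over $K$.

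The cofinality hypothesis is used exactly to choose $\lambda$: since $\o Q$ and $\o(x)$ both lie in $\o K(x)$ and $\nu K$ is cofinal in $\o K(x)$, I can pick $\lambda\in K^\times$ with $\nu\lambda>\o Q-\o(x)$. The crux is then to show that, for this $\lambda$, the polynomial $P$ has a root $b\in\overline{K}$ with $\overline{\nu}(b-a)\geq\g$. I would argue by contradiction: if every root $b_i$ of $P$ satisfied $\overline{\nu}(b_i-a)<\g$, then $\overline{\o}(x-b_i)=\overline{\nu}_{a,\g}(x-b_i)=\min\{\overline{\nu}(a-b_i),\g\}=\overline{\nu}(a-b_i)$ for each $i$, so the computation runs
\[ \o P=\overline{\o}(P)=\sum_i\overline{\o}(x-b_i)=\overline{\nu}\Bigl(\prod_i(a-b_i)\Bigr)=\overline{\nu}\bigl(P(a)\bigr)=\overline{\nu}(\lambda a)=\nu\lambda+\overline{\nu}a; \]
on the other hand $\overline{\o}(\lambda x)=\nu\lambda+\o(x)>\o Q=\overline{\o}(Q)$ by the choice of $\lambda$, whence $\o P=\overline{\o}(Q-\lambda x)=\o Q$. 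Combining the two gives $\nu\lambda+\overline{\nu}a=\o Q$, contradicting $\nu\lambda>\o Q-\o(x)\geq\o Q-\overline{\nu}a$ (the last step since $\o(x)=\overline{\nu}_{a,\g}(x)=\min\{\g,\overline{\nu}a\}\leq\overline{\nu}a$). Hence some root $b$ of $P$ lies in the closed ball of radius $\g$ about $a$.

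The remaining steps are bookkeeping. Since $\overline{\nu}(b-a)\geq\g$, the lemma relating the pairs of definition gives $\overline{\nu}_{b,\g}=\overline{\nu}_{a,\g}=\overline{\o}$, so $(b,\g)$ is a pair of definition for $\o$; minimality of $(a,\g)$ forces $[K(b):K]\geq n$, while $b$ is a root of $P$, so $[K(b):K]\leq n$. Hence $[K(b):K]=n$, $P$ is the minimal polynomial of $b$ over $K$ — in particular irreducible, hence separable since $P'\neq 0$ — and therefore $b$ is separable over $K$. Finally, for any $c\in\overline{K}$ with $\overline{\nu}(b-c)\geq\g$ the triangle inequality gives $\overline{\nu}(a-c)\geq\g$, so minimality of $(a,\g)$ yields $[K(b):K]=[K(a):K]\leq[K(c):K]$; thus $(b,\g)$ is itself a minimal pair of definition, now with $b$ separable over $K$. (When $\o$ is residue transcendental the cofinality hypothesis is automatic, since $\overline{\nu}K(a)$ has finite index in $\o K(x)$ and $\nu K$ has finite index in $\overline{\nu}K(a)$, so the argument also reproves Theorem 3.1 of [\ref{APZ2 minimal pairs}].)

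The step I expect to be the main obstacle is the existence of the root $b$ inside the ball $\{c:\overline{\nu}(c-a)\geq\g\}$: the naive Hensel/Newton estimate at $a$ fails because $a$ is a root of $Q$ of multiplicity $p^m>1$, where $p^m=[K(a):(K(a)|K)^\sep]$, so one cannot simply refine $a$ itself; the indirect comparison of $\o P$ computed in two ways — or, equivalently, a Newton-polygon analysis of $P(x+a)$ — is what makes it go through. Everything afterwards is formal manipulation with the lemma on pairs of definition and the degree bound coming from minimality of $(a,\g)$.
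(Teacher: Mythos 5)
Your proof is correct. The mechanism — replace the inseparable minimal polynomial $Q$ by $P=Q-\lambda x$ with $\nu\lambda$ large, compute $\o P$ two ways to force a root of $P$ into the closed ball $\{c:\overline{\nu}(c-a)\geq\g\}$, and then use minimality of $(a,\g)$ to identify $P$ as the (separable) minimal polynomial of that root — is sound, and each step checks out: the identity $\overline{\o}(x-b_i)=\min\{\overline{\nu}(a-b_i),\g\}$, the ultrametric evaluation $\o(Q-\lambda x)=\o Q$ under $\nu\lambda+\o x>\o Q$, the inequality $\o x=\min\{\g,\overline{\nu}a\}\leq\overline{\nu}a$, and the transfer of minimality from $(a,\g)$ to $(b,\g)$ via the triangle inequality. (Two cosmetic points: choosing $\lambda$ with a \emph{strict} inequality tacitly uses that $\nu K$ is nontrivial, but if $\nu$ is trivial the statement is vacuous since then $\o K(x)=\{0\}$ and any minimal pair has $a\in K$; and since $P'=-\lambda$ is a nonzero constant, $P$ is automatically squarefree, so separability of $b$ does not even require the irreducibility you derive.)

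Your route is genuinely different from the paper's in presentation: the paper merely observes that $\overline{\o}\overline{K}(x)/\o K(x)$ is torsion, so cofinality of $\nu K$ propagates to $\overline{\o}\overline{K}(x)$, and then declares the proof "identical to the proof of Theorem 3.1 of [APZ2]", which treats the residue transcendental case. What your argument buys is threefold: it is self-contained; it treats the value and residue transcendental cases uniformly, so the reader need not verify that the APZ2 argument survives the passage from $\g\in\overline{\nu}\overline{K}$ to $\g\notin\overline{\nu}\overline{K}$; and it only uses cofinality of $\nu K$ in $\o K(x)$ itself (to dominate the single element $\o Q-\o x$), making the paper's preliminary reduction to $\overline{\o}\overline{K}(x)$ unnecessary. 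The cost is length, and the underlying perturbation idea is presumably the same one used in APZ2.
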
	

\begin{proof}
	Observe that $\overline{\o}\overline{K}(x)/\o K(x)$ is a torsion group and hence $\o K(x)$ is cofinal in $\overline{\o}\overline{K}(x)$. Then the fact that $\nu K$ is cofinal in $\o K(x)$ implies that $\nu K$ is cofinal in $\overline{\o}\overline{K}(x)$. The proof is now identical to the proof of Theorem 3.1 of [\ref{APZ2 minimal pairs}].
\end{proof}	

We combine the preceding observations in the next theorem. 

\begin{Theorem}\label{Thm min pair of definition with sep a}
	Take a minimal pair of definition $(a,\g)$ for $\o$. Then the following cases are possible:
	\sn (i) $\g > \overline{\nu}\overline{K}$. Then $\o$ is value transcendental and $(a,\g)$ is the unique pair of definition for $\o$.
	\n (ii) $\nu K$ is cofinal in $\o K(x)$. Then there exists a minimal pair of definition $(b,\g)$ for $\o$ such that $b$ is separable over $K$.
\end{Theorem}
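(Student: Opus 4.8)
The plan is to assemble the two results proved immediately before the statement; there is essentially no new mathematical content. For part (i), the extra hypothesis $\g > \overline{\nu}\overline{K}$ is exactly the hypothesis of the Lemma immediately preceding this theorem, applied to the minimal pair of definition $(a,\g)$ fixed in the statement. That lemma already delivers both conclusions: that $\o$ is value transcendental (since $\g$ is then torsion-free modulo $\nu K$, so $\rr\,\o K(x)/\nu K = 1$) and that $(a,\g)$ is the unique pair of definition (since any other pair of definition with the same second coordinate $\g$ would force $\overline{\nu}(a-b) > \g$, contradicting $\g > \overline{\nu}\overline{K}$). So I would simply cite it.

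For part (ii), the extra hypothesis that $\nu K$ is cofinal in $\o K(x)$ is precisely the hypothesis of the Proposition immediately preceding this theorem, which produces a minimal pair of definition $(b,\g)$ with $b$ separable over $K$. I would just invoke that Proposition verbatim. The one point to be careful about in the write-up is that in case (ii) the separable pair $(b,\g)$ need not coincide with the originally fixed pair $(a,\g)$ --- only the value $\g$ is retained --- and that the two listed cases are stated as possibilities, not as an exhaustive dichotomy.

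The only genuine "obstacle" here is organizational rather than mathematical: keeping straight which pair of definition is being referred to in each clause, and stating the hypotheses so that they match those of the two cited results exactly. For orientation it may also be worth remarking that the two cases are mutually exclusive: in case (i), Lemma \ref{Lemma value transcendental extension is described by min pair} gives $\o Q \notin \overline{\nu}\overline{K}$ for the minimal polynomial $Q$ of $a$ over $K$, and since $\g$ --- hence $\o Q$ --- then dominates every element of $\nu K$, the group $\nu K$ cannot be cofinal in $\o K(x)$.
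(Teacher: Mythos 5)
Your proposal matches the paper exactly: the theorem is introduced with the phrase ``We combine the preceding observations in the next theorem,'' and its content is precisely the conjunction of the Lemma (for case (i)) and the Proposition (for case (ii)) that immediately precede it, with no further argument. Your added remark on the mutual exclusivity of the two cases is correct but not needed.
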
	

We now consider the following question: given a pair of definition $(a,\g)$ for $\o$, can we construct a minimal pair of definition for $\o$? We illustrate a solution in the particular case when $K(a)|K$ is a defectless extension of henselian valued fields. Assume that $(K,\nu)$ is henselian and take $a\in\overline{K}$. Consider the set
\[ M(a):= \{ \overline{\nu}(a-z)\mid z\in\overline{K} \text{ and } [K(z):K] < [K(a):K] \}. \]
The maximum of $M(a)$, whenever it exists, is denoted by $\d(a,K)$. It has been shown in Theorem 1.3 of [\ref{AB SKK alg max defectless}] that $M(a)$ admits a maximum whenever $(K(a)|K,\overline{\nu})$ is a defectless extension. A pair $(a,z)\in\overline{K}\times\overline{K}$ is said to form a \textbf{distinguished pair} if $[K(z):K] < [K(a):K]$, $\overline{\nu}(a-z) = \d(a,K)$ and $[K(z):K]$ is minimal with respect to this property. A chain $a:= a_0, a_1, \dotsc, a_n$ of elements in $\overline{K}$ is said to form a \textbf{complete distinguished chain} if $(a_i, a_{i+1})$ is a distinguished pair for each $i=0, \dotsc, n-1$ and $a_n\in K$. Observe that in this case we have $\d(a,K)>\d(a_1,K)>\dotsc > \d(a_n,K) = \nu a_n$. The following is Theorem 1.2 of [\ref{KA SKK chains associated with elts henselian}]: 

\begin{Theorem}\label{thm complete dist chain}
	Take a henselian valued field $(K,\nu)$. Then an element $a\in\overline{K}$ has a complete distinguished chain if and only if $(K(a)|K,\overline{\nu})$ is a defectless extension. 
\end{Theorem}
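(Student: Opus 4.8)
The plan is to prove the two implications separately, in each case by induction with a single distinguished pair as the basic building block, and to glue the pieces together using the multiplicativity of the defect in a tower of henselian valued fields.

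For ``defectless $\Rightarrow$ complete distinguished chain'' I would induct on $[K(a):K]$, the case $a\in K$ giving the chain of length $0$. If $[K(a):K]>1$ then, as $(K(a)|K,\overline\nu)$ is defectless, Theorem 1.3 of [\ref{AB SKK alg max defectless}] shows that $\d(a,K)=\max M(a)$ exists (and $M(a)\neq\emptyset$, since $\overline\nu a\in M(a)$). By the well-ordering principle, pick $a_1\in\overline K$ realising $\d(a,K)$ with $[K(a_1):K]<[K(a):K]$ minimal; then $(a,a_1)$ is a distinguished pair, and by the basic theory of distinguished pairs [\ref{PZ structure of irr poly over local fields}] we may moreover arrange $K(a_1)\subseteq K(a)$. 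Multiplicativity of the defect along $K\subseteq K(a_1)\subseteq K(a)$ then forces $K(a_1)|K$ to be defectless, so by the induction hypothesis $a_1$ has a complete distinguished chain $a_1,\dotsc,a_n\in K$; prepending $a=a_0$ gives one for $a$.

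For the converse I would induct on the length $n$ of a complete distinguished chain $a=a_0,\dotsc,a_n\in K$, the case $n=0$ being trivial. When $n\geq 1$, the tail $a_1,\dotsc,a_n$ is a complete distinguished chain for $a_1$, so by induction $K(a_1)|K$ is defectless; using $K(a_1)\subseteq K(a)$ and multiplicativity of the defect, it is enough to prove that the single step $K(a)|K(a_1)$ is defectless. Writing $L:=K(a_1)$, one first notes $\d(a,L)=\overline\nu(a-a_1)$: ``$\geq$'' holds because $a_1\in L$, and any $z$ with $[L(z):L]<[K(a):L]$ and $\overline\nu(a-z)>\overline\nu(a-a_1)$ would have $[K(z):K]<[K(a):K]$ and $\overline\nu(a-z)>\d(a,K)$, contradicting the maximality defining $\d(a,K)$. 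Thus the best approximation to $a$ by elements of $\overline K$ of degree $<[K(a):L]$ over $L$ is already attained by $a_1\in L$. If $K(a)|L$ had nontrivial defect, then --- after passing to a suitable intermediate extension --- the defect would be carried by an immediate subextension, and Kaplansky's theory would yield a pseudo-Cauchy sequence in $L$ converging to a translate of $a$ with values increasing strictly beyond $\overline\nu(a-a_1)$; this is impossible, as that value is already optimal. Hence $K(a)|L$, and therefore $K(a)|K$, is defectless.

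The crux, and the step I expect to be the main obstacle, is exactly this: that a single distinguished pair contributes no defect. This is essentially the content of the structure theorem of Popescu and Zaharescu [\ref{PZ structure of irr poly over local fields}] carried over to the henselian defectless setting; from it one extracts, for each distinguished pair $(a_i,a_{i+1})$ along the chain, the equality $[K(a_i):K(a_{i+1})]=(\overline\nu K(a_i):\overline\nu K(a_{i+1}))\,[K(a_i)\overline\nu:K(a_{i+1})\overline\nu]$, and telescoping the value-group and residue-field indices then yields $[K(a):K]=(\overline\nu K(a):\nu K)\,[K(a)\overline\nu:K\nu]$. The genuine work lies in making the pseudo-Cauchy sequence argument uniform over non-immediate defects and, beforehand, in establishing that $K(a_1)\subseteq K(a)$ for every distinguished pair.
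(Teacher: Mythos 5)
First, a point of comparison: the paper does not prove this statement at all --- it is quoted verbatim as Theorem 1.2 of the Aghigh--Khanduja reference [\ref{KA SKK chains associated with elts henselian}] --- so your argument has to stand on its own. Your overall strategy (induct along the chain, treat one distinguished pair at a time, telescope) is in the spirit of the source, but it has two genuine gaps. The first is the assertion that for a distinguished pair $(a,a_1)$ one ``may arrange $K(a_1)\subseteq K(a)$''. The definition of a distinguished pair imposes no such containment, you give no argument for it, and it is not in general arrangeable; this is precisely why the known proofs do not run through multiplicativity of the defect in a tower $K\subseteq K(a_1)\subseteq K(a)$. What one actually establishes for a distinguished pair is containment of the \emph{invariants}, namely $\overline{\nu}K(a_1)\subseteq\overline{\nu}K(a)$ and $K(a_1)\overline{\nu}\subseteq K(a)\overline{\nu}$ together with degree bounds, and it is these that telescope along the chain. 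Without the field containment, your forward induction cannot conclude that $K(a_1)|K$ is defectless from the defectlessness of $K(a)|K$, and in the converse direction the tower over which you multiply defects does not exist. A related slip in the converse: from $[L(z):L]<[L(a):L]$ you cannot deduce $[K(z):K]<[K(a):K]$, since $[K(z):K]$ may be as large as $[L(z):L]\,[L:K]$, so the claimed contradiction with the maximality defining $\delta(a,K)$ does not follow.

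The second gap is the one you flag yourself: that a single distinguished pair contributes no defect. Your sketch (locate the defect in an immediate subextension and apply Kaplansky's theory of pseudo-Cauchy sequences) only works when such an immediate subextension exists, which need not be the case; for instance, a degree-$p^2$ extension with defect $p$ and ramification index $p$ may have no proper intermediate field at all, and even when intermediate fields exist the defect need not be concentrated in an immediate step. Making this step rigorous is the actual content of the Aghigh--Khanduja theorem and of the Popescu--Zaharescu machinery of liftings and residually transcendental extensions that it rests on, so as written the proposal assumes the hard part rather than proving it.
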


We now take a henselian valued field $(K,\nu)$ and a pair of definition $(a,\g)$ for $\o$. When $\g> \overline{\nu}\overline{K}$ then $(a,\g)$ is the unique pair of definition for $\o$. So we restrict our attention to the case when $\overline{\nu}\overline{K}$ is cofinal in $\overline{\o}\overline{K}(x)$. Assume that $(K(a)|K,\overline{\nu})$ is a defectless extension. Then we have a complete distinguised chain $a=a_0, a_1, \dotsc, a_n$. We first consider the case when $\g> \d(a,K)$. If $(b,\g)$ is another pair of definition for $\o$, then $\overline{\nu}(a-b)\geq\g>\d(a,K)$ and hence $[K(b):K]\geq [K(a):K]$. It follows that $(a,\g)$ is a minimal pair of definition for $\o$. Now assume that $\g\leq \d(a,K)$. Then $\overline{\nu}(a-a_1) = \d(a,K)\geq\g$ and hence $(a_1,\g)$ is a pair of definition for $\o$. If $\g>\d(a_1,K)$ then $(a_1,\g)$ is a minimal pair of definition from our preceding discussions. Otherwise $\g\leq\d(a_1,K)$. If $\g\leq \d(a_i,K)$ for all $i=0,\dotsc,n-1$, then $\overline{\nu}(a_n-a_{n-1})=\d(a_{n-1},K) \geq \g$ and hence $(a_n,\g)$ is a pair of definition for $\o$. The fact that $a_n\in K$ implies that $(a_n,\g)$ is a minimal pair of definition for $\o$. Otherwise, we can choose some $i\in\{0,\dotsc,n-1\}$ such that $\g>\d(a_i,K)$ and $i$ is minimal with respect to this property. We conclude from our preceding arguments that $(a_i,\g)$ is a minimal pair of definition for $\o$. We have thus obtained the following result: 

\begin{Theorem}\label{Thm min pair comp dist chain}
	Take a pair of definition $(a,\g)$ for $\o$. Assume that $(K,\nu)$ is henselian and $(K(a)|K,\overline{\nu})$ is a defectless extension. Take a complete distinguished chain $a=a_0,a_1, \dotsc, a_n$ of $a$. Then $(a_i,\g)$ is a minimal pair of definition for $\o$ for some $i\in\{0,\dotsc,n\}$.
\end{Theorem}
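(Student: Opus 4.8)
The plan is to reduce the statement to a single elementary observation and then descend the complete distinguished chain $a=a_0,a_1,\dotsc,a_n$ until that observation applies.

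First I would establish the following reduction. Suppose $c\in\overline{K}$ is such that $(c,\g)$ is a pair of definition for $\o$ and, in addition, either $c\in K$ or $\g>\d(c,K)$; then $(c,\g)$ is already a minimal pair of definition for $\o$. (The quantity $\d(c,K)$ is available for $c=a_i$ with $i<n$: the tail $a_i,a_{i+1},\dotsc,a_n$ is itself a complete distinguished chain for $a_i$, so $(K(a_i)|K,\overline{\nu})$ is defectless by Theorem \ref{thm complete dist chain}, whence $M(a_i)$ has a maximum.) To prove this, take any $b\in\overline{K}$ with $\overline{\nu}(c-b)\geq\g$. If $c\in K$ then $[K(c):K]=1\leq[K(b):K]$. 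If instead $\g>\d(c,K)$ and we had $[K(b):K]<[K(c):K]$, then $\overline{\nu}(c-b)\in M(c)$, so $\overline{\nu}(c-b)\leq\d(c,K)<\g$, contradicting $\overline{\nu}(c-b)\geq\g$; hence $[K(c):K]\leq[K(b):K]$. In both cases $(c,\g)$ satisfies the definition of a minimal pair of definition.

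Next I would run the descent. If $a\in K$ then $n=0$ and $(a_0,\g)$ is minimal by the reduction, so assume $n\geq1$. Then $\d(a_0,K),\dotsc,\d(a_{n-1},K)$ all exist, $\overline{\nu}(a_j-a_{j+1})=\d(a_j,K)$, and $\d(a_0,K)>\d(a_1,K)>\dotsc>\d(a_{n-1},K)$, as recorded just before Theorem \ref{thm complete dist chain}. I would then choose $i\in\{0,\dotsc,n\}$ least such that either $i=n$ or $\g>\d(a_i,K)$; such an $i$ exists since $i=n$ always qualifies. If $i=0$ the reduction already finishes the proof. If $i\geq1$, minimality of $i$ gives $\g\leq\d(a_j,K)=\overline{\nu}(a_j-a_{j+1})$ for all $j=0,\dotsc,i-1$; since the summands of $a-a_i=\sum_{j=0}^{i-1}(a_j-a_{j+1})$ have the strictly decreasing values $\d(a_0,K)>\dotsc>\d(a_{i-1},K)$, the ultrametric inequality gives $\overline{\nu}(a-a_i)=\d(a_{i-1},K)\geq\g$. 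By the lemma describing when two pairs define the same valuation, this yields $\overline{\nu}_{a_i,\g}=\overline{\nu}_{a,\g}=\overline{\o}$, so $(a_i,\g)$ is a pair of definition for $\o$. Since either $a_i=a_n\in K$ or $\g>\d(a_i,K)$, the reduction shows $(a_i,\g)$ is a minimal pair of definition, which is what was claimed.

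There is no deep obstacle here: the content of the theorem sits entirely in the cited structural facts --- the existence of a complete distinguished chain for a defectless element (Theorem \ref{thm complete dist chain}), the existence of each $\d(a_i,K)=\max M(a_i)$ along it, and the strict decrease of the $\d(a_i,K)$ --- and the argument above is the bookkeeping that assembles them. The only points demanding a little care are the edge cases $a\in K$ (so $n=0$) and $i=n$ (so $a_i=a_n\in K$, a regime where $\d(a_n,K)$ need not be defined), together with verifying that the chosen $a_i$ really does form a pair of definition, i.e.\ that $\overline{\nu}(a-a_i)\geq\g$.
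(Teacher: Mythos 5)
Your proof is correct and follows essentially the same route as the paper: isolate the key observation that a pair of definition $(c,\g)$ with $c\in K$ or $\g>\d(c,K)$ is automatically minimal, then descend the complete distinguished chain to the least index $i$ where this applies. The only (cosmetic) difference is that you verify $\overline{\nu}(a-a_i)\geq\g$ in one step via the ultrametric inequality on the telescoping sum $\sum_{j=0}^{i-1}(a_j-a_{j+1})$, whereas the paper propagates the pair-of-definition property one link of the chain at a time; your handling of the edge case $i=n$, where $\d(a_n,K)$ is undefined, is if anything slightly more careful than the paper's.
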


	We now show that two simultaneous extensions of $\o$ and $\overline{\nu}$ to $\overline{K}(x)$ are closely related. 
	
	\begin{Lemma}\label{lemma extns of o}
		Let $\overline{\o}$ and $\overline{\o^\prime}$ be two extensions of $\o$ to $\overline{K}(x)$ with minimal pairs of definition $(a,\g)$ and $(a^\prime,\g^\prime)$ respectively. Take the minimal polynomials $Q(x)$ and $Q^\prime(x)$ of $a$ and $a^\prime$ over $K$. Then $\g=\g^\prime$, $\deg Q = \deg Q^\prime$ and $\o Q= \o Q^\prime$.
	\end{Lemma}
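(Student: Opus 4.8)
The plan is to extract all three equalities from the function $\d$, which, as recalled above, depends only on $\o$ and not on the particular extension to $\overline{K}(x)$, and then to finish off the comparison of values using the identity $\o=\nu_Q$ from Corollary \ref{corollary to Novacoski}. Write $\overline{\o}=\overline{\nu}_{a,\g}$ and $\overline{\o^\prime}=\overline{\nu^\prime}_{a^\prime,\g^\prime}$, where $\overline{\nu}$ and $\overline{\nu^\prime}$ are the restrictions of $\overline{\o}$ and $\overline{\o^\prime}$ to $\overline{K}$. For $\g=\g^\prime$: for every $z\in\overline{K}$ we have $\overline{\o}(x-z)=\min\{\g,\overline{\nu}(a-z)\}\leq\g$, with equality when $z=a$, so $\g=\max\{\overline{\o}(x-z):z\in\overline{K}\}$. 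Viewing $z$ as a root of its minimal polynomial $f_z\in K[x]$ gives $\overline{\o}(x-z)\leq\d(f_z)$, while conversely $\d(f)$ is attained at some root of $f$ and hence $\d(f)\leq\g$ for every monic $f\in K[x]$. Therefore $\g=\max\{\d(f):f\in K[x]\text{ monic}\}$, the maximum being attained at $Q$. As each $\d(f)$ is the same whether computed through $\overline{\o}$ or $\overline{\o^\prime}$, this maximum is independent of the extension; computing it through $\overline{\o^\prime}$ gives $\g^\prime$, and thus $\g=\g^\prime$.

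For $\deg Q=\deg Q^\prime$ I would argue as follows. By the lemma relating the pairs of definition, for $z\in\overline{K}$ one has $\overline{\o}=\overline{\nu}_{z,\g}$ if and only if $\overline{\nu}(a-z)\geq\g$, i.e.\ if and only if $\overline{\o}(x-z)=\g$. Minimality of $(a,\g)$ therefore means that $\deg Q=[K(a):K]$ is the least degree of such a $z$. I claim this least degree coincides with $d_0:=\min\{\deg f:f\in K[x]\text{ monic},\ \d(f)=\g\}$. On one hand $Q$ lies in this set, so $d_0\leq\deg Q$. On the other hand, if $f$ is monic with $\d(f)=\g$, then a maximal root $z_f$ of $f$ satisfies $\overline{\o}(x-z_f)=\g$, hence $\overline{\o}=\overline{\nu}_{z_f,\g}$, so $[K(z_f):K]\geq\deg Q$ by minimality and $\deg f\geq[K(z_f):K]\geq\deg Q$; this gives $d_0\geq\deg Q$. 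Since $d_0$ is manifestly determined by $\o$ and $\g$, and $\g=\g^\prime$, the same argument applied to $\overline{\o^\prime}$ yields $\deg Q^\prime=d_0$, so $\deg Q=\deg Q^\prime=:m$.

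Finally, for $\o Q=\o Q^\prime$: Corollary \ref{corollary to Novacoski} gives $\o=\nu_Q$ and $\o=\nu_{Q^\prime}$. As $Q$ and $Q^\prime$ are monic of the same degree $m$, the difference $Q^\prime-Q$ has degree $<m$, so the $Q$-expansion of $Q^\prime$ is simply $Q^\prime=(Q^\prime-Q)+Q$; applying $\o=\nu_Q$ gives $\o Q^\prime=\min\{\o(Q^\prime-Q),\o Q\}\leq\o Q$. By the symmetric argument, applying $\o=\nu_{Q^\prime}$ to the $Q^\prime$-expansion $Q=(Q-Q^\prime)+Q^\prime$ gives $\o Q\leq\o Q^\prime$. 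Hence $\o Q=\o Q^\prime$.

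The main obstacle is the middle step. The point is to recognize that the ``minimal degree of a pair of definition for $\overline{\o}$'' can be recast entirely in terms of the degrees of polynomials over $K$ whose $\d$-value equals $\g$, which makes its independence of the extension transparent; the back-and-forth between the conditions ``$\overline{\nu}(a-z)\geq\g$'' and ``$\d(f_z)=\g$ with $z$ a maximal root of $f_z$'' has to be handled carefully and uses both the maximality property $\g=\max\overline{\o}(x-\overline{K})$ and the minimality of the pair $(a,\g)$. A more conceptual alternative is available: since $\overline{K}(x)|K(x)$ is a normal extension, $\overline{\o^\prime}=\overline{\o}\circ\tau$ for some $\tau\in\Aut(\overline{K}(x)|K(x))$ fixing $x$ and restricting to an element of $\Gal(\overline{K}|K)$ on $\overline{K}$; such a $\tau$ carries the minimal pairs of definition of $\overline{\o^\prime}$ bijectively onto those of $\overline{\o}$ while preserving both the second coordinate and the degree of the minimal polynomial, after which $\o Q=\o Q^\prime$ follows exactly as in the last step above.
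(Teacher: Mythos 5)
Your proof is correct, and it takes a genuinely different route from the paper's. The paper derives all three equalities from Proposition 2.10 of Novacoski--Spivakovsky: starting from $\o=\nu_Q=\nu_{Q^\prime}$ (Corollary \ref{corollary to Novacoski}), it uses the implication $\deg Q<\deg Q^\prime\Rightarrow\d(Q)<\d(Q^\prime)\Rightarrow\nu_QQ^\prime<\o Q^\prime$ to force $\deg Q=\deg Q^\prime$, reads off $\d(Q)=\d(Q^\prime)$, i.e.\ $\g=\g^\prime$, and then invokes the equivalence $\o Q<\o Q^\prime\Leftrightarrow\d(Q)<\d(Q^\prime)$ for the final equality. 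You instead work directly from the definitions: you characterize $\g$ as $\max\{\d(f):f\in K[x]\text{ monic}\}$ and $\deg Q$ as $\min\{\deg f: f \text{ monic},\ \d(f)=\g\}$, so that both quantities are visibly determined by $\o$ alone via the extension-independence of $\d$ recorded in the preliminaries; and you obtain $\o Q=\o Q^\prime$ from the observation that $Q^\prime=(Q^\prime-Q)+Q$ is the $Q$-expansion of $Q^\prime$, whence $\o Q^\prime=\nu_QQ^\prime\le\o Q$ and symmetrically. All the individual steps check out: the passage between ``$\overline{\o}(x-z_f)=\g$'' and ``$(z_f,\g)$ is a pair of definition'' is exactly the preliminary lemma on equality of the $\overline{\nu}_{a,\g}$, and the inequality $\deg f\ge[K(z_f):K]$ closes the loop for $d_0\ge\deg Q$. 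What your approach buys is self-containedness: it uses only facts already stated in the paper (the lemma comparing pairs of definition, the independence of $\d$ from the choice of $\overline{\o}$, and Corollary \ref{corollary to Novacoski}), and the $Q$-expansion trick is a clean elementary substitute for Proposition 2.10(iii); the paper's version is shorter on the page but outsources the substance to the cited result. Your closing remark on conjugating $\overline{\o^\prime}$ into $\overline{\o}$ by an element of $\Aut(\overline{K}(x)|K(x))$ is also a legitimate alternative for the first two equalities, though, as you note, it still needs the expansion argument (or something equivalent) for $\o Q=\o Q^\prime$.
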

	
	\begin{proof}
		From Corollary \ref{corollary to Novacoski} we have that $\nu_Q = \o = \nu_{Q^\prime}$ and $Q$ and $Q^\prime$ are key polynomials for $\o$. Proposition 2.10 of [\ref{Nova Spiva key pol pseudo convergent}] gives us that 
		\[ \deg Q < \deg Q^\prime \Longrightarrow \d(Q) < \d(Q^\prime) \Longrightarrow \nu_Q Q^\prime < \o Q^\prime. \]
		Then the fact that $\o = \nu_Q$ implies $\deg Q \geq \deg Q^\prime$. Similarly we obtain $\deg Q^\prime \geq \deg Q$ and hence $\deg Q = \deg Q^\prime$. We can further observe that $\d(Q) = \d(Q^\prime)$, that is, $\g=\g^\prime$. Again from Proposition 2.10 $(iii)$ of [\ref{Nova Spiva key pol pseudo convergent}] we have, 
		\[ \o Q < \o Q^\prime \Longleftrightarrow \nu_Q Q^\prime < \o Q^\prime \Longleftrightarrow \d(Q) < \d(Q^\prime). \]
		It follows that $\o Q = \o Q^\prime$.
	\end{proof}
	
	The following is Theorem 2.2 of [\ref{APZ2 minimal pairs}]: 
	
	\begin{Theorem}\label{thm r.t. extns coincide on K(x)}
		Let $\overline{\o}$ and $\overline{\o^\prime}$ be two residue transcendental extensions of $\overline{\nu}$ to $\overline{K}(x)$. Then the following statements are equivalent:
		\sn (i) $\overline{\o}|_{K(x)} = \overline{\o^\prime}|_{K(x)}$,
		\n (ii) there exist minimal pairs of definition $(a,\g)$ and $(a^\prime,\g^\prime)$ of $\overline{\o}$ and $\overline{\o^\prime}$ such that the following conditions are fulfilled:
		\sn (a) $\g = \g^\prime$ and $a, a^\prime$ are conjugates over $K$,
		\n (b) if $g(x)\in K[x]$ is such that $\deg g < [K(a):K]$, then $\overline{\nu} g(a) = \overline{\nu} g(a^\prime)$.
	\end{Theorem}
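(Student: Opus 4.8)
The plan is to prove the two implications separately, and the substance lies entirely in (i)$\Rightarrow$(ii).

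For (ii)$\Rightarrow$(i) the work is minimal. Suppose $(a,\g)$ and $(a^\prime,\g^\prime)$ are minimal pairs of definition for $\overline{\o}$ and $\overline{\o^\prime}$ satisfying (a) and (b). Condition (a) says $a$ and $a^\prime$ are conjugate over $K$, so they share a minimal polynomial $Q$ over $K$. Applying Corollary \ref{corollary to Novacoski} first to $\overline{\o}$ and then to $\overline{\o^\prime}$ yields $\overline{\o}|_{K(x)} = \nu_Q = \overline{\o^\prime}|_{K(x)}$. Note that (b) is not used in this direction; it is recorded in the statement because, as the converse shows, it comes for free.

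For (i)$\Rightarrow$(ii) I would set $\o := \overline{\o}|_{K(x)} = \overline{\o^\prime}|_{K(x)}$ and fix arbitrary minimal pairs of definition $(a,\g)$ for $\overline{\o}$ and $(a^\prime,\g^\prime)$ for $\overline{\o^\prime}$, with minimal polynomials $Q$ and $Q^\prime$ over $K$. Lemma \ref{lemma extns of o} gives $\g = \g^\prime$ and $\deg Q = \deg Q^\prime$; Corollary \ref{corollary to Novacoski} shows $Q$ and $Q^\prime$ are key polynomials for $\o$, and (as in the proof of that corollary) $\d(Q^\prime) = \g^\prime = \g$. These two pairs need not yet satisfy (a) jointly, since $Q$ and $Q^\prime$, though of equal degree, may differ; so the plan is to replace $(a,\g)$ by a minimal pair of definition of $\overline{\o}$ whose first coordinate is a root of $Q^\prime$. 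To find such a root I would use that $\d$ is independent of the chosen extension of $\o$ to $\overline{K}(x)$: measured by $\overline{\o} = \overline{\nu}_{a,\g}$, every root $z$ of $Q^\prime$ satisfies $\overline{\o}(x-z) = \min\{\g,\overline{\nu}(a-z)\} \leq \g$, so the identity $\g = \d(Q^\prime) = \max\{\overline{\o}(x-z)\mid Q^\prime(z)=0\}$ forces $\overline{\nu}(a-b)\geq\g$ for some root $b$ of $Q^\prime$. By the lemma relating the different pairs of definition, $\overline{\nu}_{b,\g} = \overline{\nu}_{a,\g} = \overline{\o}$, so $(b,\g)$ is again a pair of definition for $\overline{\o}$, and it is minimal because $[K(b):K] = \deg Q^\prime = \deg Q$. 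With $(b,\g)$ in place of $(a,\g)$, the first coordinates $b$ and $a^\prime$ are both roots of $Q^\prime$, hence conjugate over $K$, which is (a). For (b), take $g(x)\in K[x]$ with $\deg g < \deg Q^\prime$: Remark \ref{Remark r.t. value grp res field} applied to $\overline{\o}$ with $(b,\g)$ gives $\o g = \overline{\nu}g(b)$, and applied to $\overline{\o^\prime}$ with $(a^\prime,\g)$ gives $\o g = \overline{\nu}g(a^\prime)$ (both $\overline{\nu}$'s being the common restriction of $\overline{\o}$ and $\overline{\o^\prime}$ to $\overline{K}$); equating yields $\overline{\nu}g(b) = \overline{\nu}g(a^\prime)$.

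The main obstacle is precisely the production of the root $b$: once $a$ and $a^\prime$ are conjugate the two valuations on $K(x)$ agree automatically, but conjugacy is \emph{not} forced by the numerical data $\g = \g^\prime$ and $\deg Q = \deg Q^\prime$ coming out of Lemma \ref{lemma extns of o} — distinct polynomials of the same degree over $K$ exist. It is the invariance of $\d(Q^\prime)$ under change of extension of $\o$ to $\overline{K}(x)$ that locates a root of $Q^\prime$ lying $\g$-close to $a$, and everything else is bookkeeping around Corollary \ref{corollary to Novacoski} and Remark \ref{Remark r.t. value grp res field}.
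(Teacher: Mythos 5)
The paper offers no proof of this statement---it is quoted verbatim as Theorem 2.2 of the Alexandru--Popescu--Zaharescu paper---so your argument has to stand on its own. Your direction (i)$\Rightarrow$(ii) does: combining Lemma \ref{lemma extns of o} with the invariance of $\d(Q^\prime)$ under change of the extension of $\o$ to locate a root $b$ of $Q^\prime$ with $\overline{\nu}(a-b)\geq\g$, replacing $(a,\g)$ by $(b,\g)$, and then reading (b) off from Remark \ref{Remark r.t. value grp res field} applied to both valuations is exactly the right mechanism, and the degree bookkeeping is correct.

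The direction (ii)$\Rightarrow$(i) has a genuine gap, and your aside that (b) ``comes for free'' is false. The map $\nu_Q$ is defined \emph{in terms of the valuation under consideration}, via $\nu_Q f=\min\{\o f_i+i\,\o Q\}$; so Corollary \ref{corollary to Novacoski} applied to $\overline{\o}$ and to $\overline{\o^\prime}$ produces two a priori different maps, and writing $\overline{\o}|_{K(x)}=\nu_Q=\overline{\o^\prime}|_{K(x)}$ silently identifies them. That identification is exactly what condition (b) buys, and it can fail when (a) holds. Concretely: let $K=\QQ(t)$ with $\nu$ the rank-two valuation composing the $t$-adic valuation with the $5$-adic valuation on the residue field $\QQ$, so $\nu K=(\ZZ\dirsum\ZZ)_{\text{lex}}$, $\nu t=(1,0)$, $\nu 5=(0,1)$. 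Fix $\overline{\nu}$ on $\overline{K}$ with $\overline{\nu}(i-2)=(0,1)$, hence $\overline{\nu}(i+2)=(0,0)$ since $(i-2)(i+2)=-5$. Put $\g=(1,0)=\nu t\in\nu K$. Then $(i,\g)$ and $(-i,\g)$ are minimal pairs of definition for the residue transcendental extensions $\overline{\nu}_{i,\g}$ and $\overline{\nu}_{-i,\g}$: any $b\in K$ with $\overline{\nu}(\pm i-b)\geq(1,0)$ would have the same residue as $\pm i$ at the $t$-adic place of $K(i)$, forcing $\pm i\in\QQ$. Condition (a) holds ($i$ and $-i$ are conjugate, $\g=\g^\prime$), yet $\overline{\nu}_{i,\g}(x-2)=\min\{(1,0),(0,1)\}=(0,1)$ while $\overline{\nu}_{-i,\g}(x-2)=\min\{(1,0),(0,0)\}=(0,0)$, so the restrictions to $K(x)$ differ; of course (b) fails for $g=x-2$. (The same identification of the two maps $\nu_Q$ occurs in the reverse direction of Theorem \ref{thm extensions coincide on K(x)}, so that argument cannot be borrowed to close the gap.) The repair is to use (b) and bypass $\nu_Q$ altogether: write $f=\sum_j h_j(a)(x-a)^j$ with $h_j\in K[x]$, $\deg h_j<\deg Q$; applying a $K$-automorphism sending $a$ to $a^\prime$ to this identity gives $f=\sum_j h_j(a^\prime)(x-a^\prime)^j$, whence $\overline{\o}f=\min_j\{\overline{\nu}h_j(a)+j\g\}$ and $\overline{\o^\prime}f=\min_j\{\overline{\nu}h_j(a^\prime)+j\g\}$, and these agree for every $f\in K[x]$ precisely by (b).
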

	In the next theorem, we show that analogous results hold in the case of value transcendental extensions as well.

\begin{Theorem}\label{thm extensions coincide on K(x)}
	Let $\overline{\o}$ and $\overline{\o^\prime}$ be two value transcendental extensions of $\overline{\nu}$ to $\overline{K}(x)$. Then the following are equivalent:
	\sn (i) $\overline{\o}|_{K(x)} = \overline{\o^\prime}|_{K(x)}$,
	\n (ii) there exist minimal pairs of definition $(a,\g)$ and $(a^\prime,\g^\prime)$ for $\overline{\o}$ and $\overline{\o^\prime}$ respectively such that $a$ and $a^\prime$ are conjugates over $K$.\\
	In this case we have that $\g = \g^\prime$. Further, for any polynomial $g(x)\in K[x]$ with $\deg g<[K(a):K]$, we have that $\overline{\nu}g(a)=\overline{\nu}g(a^\prime)$.   
\end{Theorem}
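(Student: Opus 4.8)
The plan is to prove the two implications and the two supplementary identities, using Corollary~\ref{corollary to Novacoski}, Lemma~\ref{Lemma value transcendental extension is described by min pair}, Lemma~\ref{lemma extns of o}, the theorem of Novacoski quoted above, and a Krasner-type argument. I would begin with $(i)\Rightarrow(ii)$, which is the cleaner direction. Assume $\overline\o|_{K(x)}=\overline{\o^\prime}|_{K(x)}=:\omega$. Lemma~\ref{lemma extns of o} already gives $\g=\g^\prime$, $\deg Q=\deg Q^\prime=:n$ and $\omega Q=\omega Q^\prime$, and Corollary~\ref{corollary to Novacoski} gives $\omega=\nu_Q$ with $Q$ a key polynomial of $\omega$. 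Since $\d(Q)=\g$ and this value is independent of the chosen extension of $\omega$ to $\overline{K}(x)$, there is a root $a^{\prime\prime}$ of $Q$ with $\overline{\o^\prime}(x-a^{\prime\prime})=\g$. By the quoted theorem of Novacoski, $(a^{\prime\prime},\g)$ is then a minimal pair of definition; and since the extension is value transcendental, the monomials $\overline{\nu}(c_i)+i\g$ in the $(x-a^{\prime\prime})$-expansion of any $f\in K[x]$ lie in pairwise distinct cosets of $\overline{\nu}\overline{K}$ (as $\g\notin\overline{\nu}\overline{K}$), so no cancellation can occur and $\overline{\nu}_{a^{\prime\prime},\g}=\overline{\o^\prime}$. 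Thus $(a^{\prime\prime},\g)$ is a minimal pair of definition for $\overline{\o^\prime}$ with $a^{\prime\prime}$ conjugate to $a$ over $K$, which is $(ii)$.

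For $(ii)\Rightarrow(i)$, fix $\s\in\Gal(\overline{K}|K)$ with $\s a=a^\prime$, so that $Q$ is also the minimal polynomial of $a^\prime$ and $\deg Q=\deg Q^\prime=:n$. The first step is to note that a minimal pair forces $\nu$ to admit a unique extension from $K$ to $K(a)$: otherwise $Q$ factors non-trivially over the henselization $K^h$, and since $K$ is dense in $K^h$ a proper $K^h$-factor of $Q$ can be approximated by a polynomial over $K$ all of whose roots have degree $<n$ and lie $\overline{\nu}$-arbitrarily close to $a$, contradicting the minimality of $(a,\g)$. Consequently $a$ and $a^\prime$ are conjugate over $K^h$, i.e. $\s$ may be chosen in the decomposition group $G_d$, so that $\overline{\nu}\circ\s=\overline{\nu}$ on $\overline{K}$. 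For $g\in K[x]$ with $\deg g<n$ this yields $\overline{\nu}g(a^\prime)=\overline{\nu}\,\s(g(a))=\overline{\nu}g(a)$. Feeding this into the $Q$-expansion formula of Lemma~\ref{Lemma value transcendental extension is described by min pair} for $\overline\o=\overline{\nu}_{a,\g}$ and $\overline{\o^\prime}=\overline{\nu}_{a^\prime,\g^\prime}$, the equality $\overline\o|_{K(x)}=\overline{\o^\prime}|_{K(x)}$ reduces to the single equality $\overline\o Q=\overline{\o^\prime}Q$; comparing the factorisations $Q=\prod_i(x-a_i)$ over $\overline{K}$, the $G_d$-conjugacy shows that $\{\overline{\nu}(a-a_i)\}_i$ and $\{\overline{\nu}(a^\prime-a_i)\}_i$ coincide as multisets, after which $\overline\o Q=\overline{\o^\prime}Q$ becomes equivalent to $\g=\g^\prime$; this last equality is then extracted by comparing $\overline\o Q$ and $\overline{\o^\prime}Q$ using the identity $\overline{\nu}g(a)=\overline{\nu}g(a^\prime)$ applied to low-degree polynomials (such as the formal derivative of $Q$, when it is non-zero) together with $\g,\g^\prime\notin\overline{\nu}\overline{K}$.

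Finally, for the supplementary claims: once $(i)$ holds, $\g=\g^\prime$ is immediate from Lemma~\ref{lemma extns of o}, and for $g\in K[x]$ with $\deg g<n$ one has $\overline{\nu}g(a)=\overline\o(g)=\overline{\o^\prime}(g)=\overline{\nu}g(a^\prime)$ by Lemma~\ref{Lemma value transcendental extension is described by min pair} together with $(i)$.

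I expect the genuine obstacle to be the $(ii)\Rightarrow(i)$ direction: passing from mere Galois conjugacy of $a$ and $a^\prime$ to the numerical identities, and above all to $\g=\g^\prime$. This is exactly the place where both hypotheses are essential — the minimality of the pairs (to force the unique-extension property, hence the $G_d$-conjugacy that produces $\overline{\nu}g(a)=\overline{\nu}g(a^\prime)$) and the value-transcendental condition $\g\notin\overline{\nu}\overline{K}$ (to forbid cancellations in $Q$-expansions and to pin down $\g$). In the residue-transcendental analogue, Theorem~\ref{thm r.t. extns coincide on K(x)}, these identities genuinely have to be imposed as part of the statement, so the improvement here should rest entirely on value transcendentality and on minimality; the main risk in carrying out the plan is that the step ``$\overline\o Q=\overline{\o^\prime}Q$ forces $\g=\g^\prime$'' may need the mild additional hypothesis that $\nu K$ be cofinal in $\o K(x)$ (the complementary case, $\g>\overline{\nu}\overline{K}$, being governed separately by the uniqueness of the pair of definition).
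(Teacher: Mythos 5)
Your forward direction $(i)\Rightarrow(ii)$ and your derivation of the two supplementary identities are essentially the paper's argument: Lemma~\ref{lemma extns of o} gives $\g=\g^\prime$ and $\deg Q=\deg Q^\prime$, the independence of $\d(Q)$ from the chosen extension produces a conjugate $a^{\prime\prime}$ of $a$ with $\overline{\o^\prime}(x-a^{\prime\prime})=\g$, and the equality of degrees makes $(a^{\prime\prime},\g)$ minimal for $\overline{\o^\prime}$; afterwards $\overline{\nu}g(a)=\o g=\overline{\nu}g(a^\prime)$ follows from Lemma~\ref{Lemma value transcendental extension is described by min pair}. That part is fine.

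The reverse direction is where your proposal breaks. The paper disposes of $(ii)\Rightarrow(i)$ in one line: $a$ and $a^\prime$ have the same minimal polynomial $Q$, so Corollary~\ref{corollary to Novacoski} gives $\overline{\o}|_{K(x)}=\nu_Q=\overline{\o^\prime}|_{K(x)}$. You instead route the argument through the claim that minimality of $(a,\g)$ forces $\nu$ to admit a unique extension from $K$ to $K(a)$, justified by ``$K$ is dense in $K^h$''. That justification is false outside rank one ($K^h|K$ is immediate but $K$ need not be dense in $K^h$ for higher rank; the paper's own Lemma~\ref{Lemma key pol irr over heselian or rank one} needs the hypotheses ``henselian or rank one'' \emph{and} ``$\nu K$ cofinal in $\o K(x)$'' precisely to run the continuity-of-roots approximation you are invoking), and the claimed conclusion itself is false: when $\g>\overline{\nu}\overline{K}$ \emph{every} pair $(a,\g)$ is automatically the unique, hence minimal, pair of definition, with no constraint whatsoever on how $\nu$ extends to $K(a)$ — take for instance $K=\QQ$ with the $5$-adic valuation, $a$ a root of $x^2+1$ (which splits over $K^h$), and $\g=(1,0)$ in $\ZZ\dirsum\QQ$ ordered lexicographically. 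Once the unique-extension claim fails, so does the $G_d$-conjugacy of $a$ and $a^\prime$, hence the identity $\overline{\nu}g(a)=\overline{\nu}g(a^\prime)$ on which the rest of your $(ii)\Rightarrow(i)$ rests; and the final extraction of $\g=\g^\prime$ from $\overline{\o}Q=\overline{\o^\prime}Q$ is in any case only sketched, not carried out. Your closing instinct that this direction is the genuinely delicate one (and that conjugacy over $K$ alone does not obviously yield the numerical identities) is sound — but the repair you propose does not work, and it is not the route the paper takes.
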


\begin{proof}
	We first prove the forward direction of the equivalence. Set $\o: = \overline{\o}|_{K(x)} = \overline{\o^\prime}|_{K(x)}$. Take minimal pairs of definition $(a,\g)$ and $(b,\g^\prime)$ for $\overline{\o}$ and $\overline{\o^\prime}$ respectively, and consider the minimal polynomials $Q$ and $Q^\prime$ of $a$ and $b$ over $K$. It follows from Lemma \ref{lemma extns of o} that $\g=\g^\prime$ and $\deg Q = \deg Q^\prime$. The fact that $\d(Q)$ is independent of the extension of $\o$ implies that $\overline{\o^\prime}(x-a^\prime)=\g$ for some conjugate $a^\prime$ of $a$. Hence $(a^\prime,\g)$ is a pair of definition for $\overline{\o^\prime}$. The fact that $\deg Q=\deg Q^\prime$ then implies that $(a^\prime,\g)$ is a minimal pair of definition for $\overline{\o^\prime}$. 
	
	\pars We now prove the reverse direction. Take minimal pairs of definition $(a,\g)$ and $(a^\prime,\g^\prime)$ for $\overline{\o}$ and $\overline{\o^\prime}$ respectively such that $a$ and $a^\prime$ are conjugates over $K$. Take the minimal polynomial $Q(x)$ of $a$ over $K$. Then $Q(x)$ is also the minimal polynomial of $a^\prime$ over $K$. It now follows from Corollary \ref{corollary to Novacoski} that $\overline{\o}|_{K(x)} = \nu_Q = \overline{\o^\prime}|_{K(x)}$.
	
	\pars We now assume that $\overline{\o}$ and $\overline{\o^\prime}$ coincide on $K(x)$ and we denote the restricted valuation by $\o$. It follows from Lemma \ref{Lemma value transcendental extension is described by min pair} that if $g(x)\in K[x]$ is a polynomial such that $\deg g< [K(a):K]$, then $\overline{\nu}g(a)=\o g = \overline{\nu}g(a^\prime)$.
\end{proof}	
	
\begin{Corollary}
	There exist finitely many simultaneous extensions of $\o$ and $\overline{\nu}$ to $\overline{K}(x)$.
\end{Corollary}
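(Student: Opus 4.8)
The plan is to fix one simultaneous extension $\overline{\o}$ of $\o$ and $\overline{\nu}$ to $\overline{K}(x)$, read off a minimal pair of definition $(a,\g)$ from it, and then show that \emph{every} simultaneous extension equals $\overline{\nu}_{a^\prime,\g}$ for some conjugate $a^\prime$ of $a$ over $K$. As $a$ has only finitely many conjugates over $K$, this yields the finiteness at once.

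First I would invoke Lemma 3.2 of [\ref{Kuh value groups residue fields rational fn fields}] to fix such an $\overline{\o}$, choose a minimal pair of definition $(a,\g)$ for it, let $Q(x)$ be the minimal polynomial of $a$ over $K$, and set $n := \deg Q = [K(a):K]$. By the maximality property $\overline{\o}(x-a)=\g=\max\overline{\o}(x-\overline{K})$ recalled in Section \ref{section preliminiaries}, we have $\d(Q)=\g$. Now let $\overline{\o^\prime}$ be an arbitrary simultaneous extension of $\o$ and $\overline{\nu}$ to $\overline{K}(x)$; in particular $\overline{\o}$ and $\overline{\o^\prime}$ are two extensions of $\o$ to $\overline{K}(x)$, so Lemma \ref{lemma extns of o} applies to them. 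Since $\d(Q)$ does not depend on the chosen extension of $\o$, computing it with respect to $\overline{\o^\prime}$ again gives $\d(Q)=\g$, so $\overline{\o^\prime}(x-a^\prime)=\g$ for some root $a^\prime$ of $Q$, i.e. for some conjugate $a^\prime$ of $a$ over $K$. Arguing as in the proof of Theorem \ref{thm extensions coincide on K(x)} when $\o$ is value transcendental, and as in the proof of Theorem \ref{thm r.t. extns coincide on K(x)} when $\o$ is residue transcendental, one then gets that $(a^\prime,\g)$ is a minimal pair of definition for $\overline{\o^\prime}$, and hence $\overline{\o^\prime}=\overline{\nu}_{a^\prime,\g}$ (with $\overline{\nu}$ our fixed extension, which is legitimate since $\overline{\o^\prime}$ restricts to $\overline{\nu}$ on $\overline{K}$).

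From this it follows that every simultaneous extension of $\o$ and $\overline{\nu}$ to $\overline{K}(x)$ lies in the set $\{\,\overline{\nu}_{a^\prime,\g}\mid a^\prime \text{ is a conjugate of } a \text{ over } K\,\}$, which has at most $n$ elements, so there are only finitely many of them. There is no serious obstacle here: the genuine content is already carried by Lemma \ref{lemma extns of o} and Theorems \ref{thm extensions coincide on K(x)} and \ref{thm r.t. extns coincide on K(x)}. The one point that must be handled carefully is that the conjugate $a^\prime$ is a root of the \emph{same} polynomial $Q$ over $K$ — it is precisely this, rather than merely a bound of the form $\overline{\nu}(a-a^\prime)\geq\g$, that keeps the list of candidate valuations finite — together with the routine case split between the value and residue transcendental settings.
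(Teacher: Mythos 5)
Your proposal is correct and follows essentially the same route as the paper: fix one simultaneous extension with minimal pair $(a,\g)$, use Lemma \ref{lemma extns of o} together with Theorems \ref{thm r.t. extns coincide on K(x)} and \ref{thm extensions coincide on K(x)} to show every other simultaneous extension has a minimal pair $(a^\prime,\g)$ with $a^\prime$ a conjugate of $a$, and conclude there are at most $[K(a):K]$ of them. Your explicit remark that the minimal pair determines the valuation as $\overline{\nu}_{a^\prime,\g}$ merely spells out a step the paper leaves implicit.
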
	
	
\begin{proof}
	Let $\overline{\o}$ be a simultaneous extension of $\o$ and $\overline{\nu}$ and let $(a,\g)$ be a minimal pair of definition for it. It follows from Theorem \ref{thm r.t. extns coincide on K(x)} and Theorem \ref{thm extensions coincide on K(x)} that any other simultaneous extension has a minimal pair of definition of the form $(a^\prime,\g)$, where $a^\prime$ is a conjugate of $a$. Hence there are at most $[K(a):K]$ distinct simultaneous extensions of $\o$ and $\overline{\nu}$ to $\overline{K}(x)$.
\end{proof}	
	
\begin{Remark}
	Take a minimal pair of deinition $(a,\g)$ for $\o$. Following the notation of [\ref{APZ2 minimal pairs}], we define:
	\[ [K:\o]:= [K(a):K]. \]
	Take an extension $\overline{\o}$ of $\o$ to $\overline{K}(x)$ such that $(a,\g)$ is a minimal pair of definition for $\overline{\o}$. We observe that $[K:\o]$ does not depend on the choice of the minimal pair of definition for $\overline{\o}$. It follows from Lemma \ref{lemma extns of o} that $[K:\o]$ is also independent of the extension $\overline{\o}$ and the valuation $\overline{\nu}$. Hence $[K:\o]$ depends only on $K$ and the value transcendental extension $\o$. 
\end{Remark}

%----------------------------------------------------------------

\section{Minimal fields of definition for $\o$}\label{section min fields}

\begin{Definition}
		Take a minimal pair of definition $(a,\g)$ for a valuation transcendental extension $\o$ of $\nu$ to $K(x)$. The field $K(a)$ is then said to be a \textbf{minimal field of definition for $\o$}.
\end{Definition}
\underline{Throughout the rest of the paper}, given a minimal field of definition $K(a)$ for $\o$, we will fix an extension $\overline{\o}$ which has $(a,\g)$ as a minimal pair of definition for some $\g\in\overline{\o}\overline{K}(x)$. Further, we will fix an extension of $\overline{\o}$ to $\overline{K(x)}$ and denote it again by $\overline{\o}$.
	
We observe that the minimal fields of definition share some common ramification theoretic properties.

\begin{Proposition}
Take two extensions $\overline{\o}$ and $\overline{\o^\prime}$ of $\o$ to $\overline{K}(x)$ with minimal pairs of definition $(a,\g)$ and $(a^\prime,\g)$ respectively. Then $(\overline{\o}K(a):\nu K) = (\overline{\o^\prime}K(a^\prime):\nu K)$ and $K(a)\overline{\o}=K(a^\prime)\overline{\o^\prime}$.
\end{Proposition}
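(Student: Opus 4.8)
The plan is to prove that each of the two quantities is an invariant of the extension $(K(x)|K,\o)$ alone, so that it cannot change when one passes from one extension $\overline{\o}$ of $\o$ with minimal pair $(a,\g)$ to another. First I would record the preliminaries: since $\overline{\o}$ restricts to $\overline{\nu}$ on $\overline{K}\supseteq K(a)$, we have $\overline{\o}K(a)=\overline{\nu}K(a)$ and $K(a)\overline{\o}=K(a)\overline{\nu}$, and likewise for $a^\prime$; and by Lemma \ref{lemma extns of o} the minimal polynomials $Q,Q^\prime$ of $a,a^\prime$ over $K$ satisfy $\deg Q=\deg Q^\prime=:n$ and $\o Q=\o Q^\prime$, so $[K(a):K]=[K(a^\prime):K]=n$. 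I would then handle the residue fields once and for all, and split the value groups into the residue and value transcendental cases.

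For the residue fields I would invoke the structural results recalled in Remark \ref{Remark value group res field} and Remark \ref{Remark r.t. value grp res field}. If $\o$ is value transcendental, Remark \ref{Remark value group res field} gives $K(x)\o=K(a)\overline{\nu}$, which is algebraic of degree $\le n$ over $K\nu$ and hence is its own relative algebraic closure in $K(x)\o$; if $\o$ is residue transcendental, Remark \ref{Remark r.t. value grp res field} says precisely that $K(a)\overline{\nu}$ is canonically identified with the relative algebraic closure of $K\nu$ in $K(x)\o$. Either way $K(a)\overline{\nu}$ is canonically isomorphic over $K\nu$ to the relative algebraic closure of $K\nu$ in $K(x)\o$, an object intrinsic to $(K(x)|K,\o)$; the same holds for $K(a^\prime)\overline{\nu}$, giving $K(a)\overline{\o}=K(a^\prime)\overline{\o^\prime}$.

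For the value groups, in the value transcendental case Remark \ref{Remark value group res field} gives $\o K(x)=\overline{\nu}K(a)\dirsum\ZZ\o Q$ and Lemma \ref{Lemma value transcendental extension is described by min pair} gives $\o Q\notin\overline{\nu}\overline{K}$; since $\overline{\nu}\overline{K}$ is divisible and the ambient value group torsion free, no nonzero multiple of $\o Q$ lies in $\overline{\nu}\overline{K}$, so $\overline{\nu}K(a)=\o K(x)\cap\overline{\nu}\overline{K}$. The right-hand side depends only on $\o$ and $\overline{\nu}$, hence $\overline{\nu}K(a)=\overline{\nu}K(a^\prime)$ and the indices over $\nu K$ agree. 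In the residue transcendental case $\o K(x)/\nu K$ is finite, because Remark \ref{Remark r.t. value grp res field} gives $\o K(x)=\overline{\nu}K(a)+\ZZ\o Q$ with $(\o K(x):\overline{\nu}K(a))=e$ finite and $(\overline{\nu}K(a):\nu K)\le n$; by the tower law $(\overline{\nu}K(a):\nu K)=(\o K(x):\nu K)/e$, so it is enough to see that $e$ is intrinsic to $(K(x)|K,\o)$. For this I would use the other assertion of Remark \ref{Remark r.t. value grp res field}: $Q^e/g$, for a suitable $g\in K[x]$ with $\deg g<n$, is an element of $\mathcal{O}_{K(x)}$ of smallest possible order whose residue is transcendental over $K\nu$; as $\gcd(Q^e,g)=1$ and $\deg g<n$, this smallest order equals $en$. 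Since $n$ (by Lemma \ref{lemma extns of o}), the smallest such order, and $(\o K(x):\nu K)$ are all intrinsic to $(K(x)|K,\o)$, so is $e$, and hence so is $(\overline{\nu}K(a):\nu K)$.

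The bookkeeping with indices and the coprimality-and-degree computation of $\ord(Q^e/g)$ are routine. The one genuinely non-formal step, and the place I expect to need most care, is making the relative ramification index $e=(\o K(x):\overline{\nu}K(a))$ visibly intrinsic in the residue transcendental case; the ``smallest order of an element with transcendental residue'' characterisation from Remark \ref{Remark r.t. value grp res field} is exactly what makes this work. Everything else reduces to reading off invariants of $(K(x)|K,\o)$ from the structure theorems already established.
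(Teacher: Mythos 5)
Your proposal is correct and follows essentially the same route as the paper: Lemma \ref{lemma extns of o} to get $\deg Q=\deg Q^\prime$ and $\o Q=\o Q^\prime$, the structure statements of Remarks \ref{Remark value group res field} and \ref{Remark r.t. value grp res field} to identify the residue fields (as the relative algebraic closure of $K\nu$ in $K(x)\o$, resp.\ as $K(x)\o$ itself), and the ``smallest order of an element of $\mathcal{O}_{K(x)}$ with transcendental residue'' characterisation to pin down $e=e^\prime$ and hence the index in the residue transcendental case. Your explicit identification $\overline{\nu}K(a)=\o K(x)\sect\overline{\nu}\overline{K}$ in the value transcendental case is just a slightly more spelled-out version of the paper's cancellation of $\ZZ\o Q$ from the direct sum decomposition.
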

	
	\begin{proof}
	Take the minimal polynomials $Q(x)\in K[x]$ and $Q^\prime(x)\in K[x]$ of $a$ and $a^\prime$ over $K$. It follows from Corollary \ref{corollary to Novacoski} that $\nu_Q = \o = \nu_{Q^\prime}$. We also obtain from Lemma \ref{lemma extns of o} that $\o Q = \o Q^\prime$ and $\deg Q = \deg Q^\prime$. 
	
	Assume that $\o$ is value transcendental. We directly observe from Remark \ref{Remark value group res field} that $K(a)\overline{\o} = K(x)\o = K(a^\prime)\overline{\o^\prime}$. Further, $\overline{\o}K(a)\dirsum\ZZ\o Q = \o K(x) = \overline{\o^\prime}K(a^\prime)\dirsum\ZZ\o Q $ as $\o Q = \o Q^\prime$. It follows that $\overline{\o}K(a) = \overline{\o^\prime}K(a^\prime)$.
	
	We now assume that $\o$ is residue transcendental. We observe from Remark \ref{Remark r.t. value grp res field} that $K(a)\overline{\o}$ is the relative algebraic closure of $K\nu$ in $K(x)\o$, and the same observation also holds for $K(a^\prime)\overline{\o^\prime}$. Hence $K(a)\overline{\o} = K(a^\prime)\overline{\o^\prime}$. Take the least positive integers $e$ and $e^\prime$ such that $e\o Q\in\overline{\o}K(a)$ and $e^\prime\o Q^\prime\in\overline{\o^\prime}K(a^\prime)$. Take polynomials $g,g^\prime$ in $K[x]$ such that $\deg g, \deg g^\prime < \deg Q$ and $\o \frac{Q^e}{g} = 0 = \o \frac{Q^{\prime e^\prime}}{g^\prime}$. Then it follows from Remark \ref{Remark r.t. value grp res field} that $\frac{Q^e}{g}$ and $\frac{Q^{\prime e^\prime}}{g^\prime}$ are the elements of smallest order in $\mathcal{O}_{K(x)}$ such that their residues are transcendental over $K\nu$. Consequently,
	\[ e\deg Q = \ord (\frac{Q^e}{g}) = \ord (\frac{Q^{\prime e^\prime}}{g^\prime}) = e^\prime \deg Q^\prime \Longrightarrow e=e^\prime, \]
	since $\deg Q = \deg Q^\prime$. It further follows from Remark \ref{Remark r.t. value grp res field} that 
	\[ e(\overline{\o}K(a):\nu K) = (\o K(x):\nu K) = e(\overline{\o^\prime}K(a^\prime):\nu K). \]
	As a consequence we obtain that $(\overline{\o}K(a):\nu K) =  (\overline{\o^\prime}K(a^\prime):\nu K)$. 	
	\end{proof}
	
	The following corollary is now immediate:
	
	\begin{Corollary}
		A minimal field of definition for $\o$ is an immediate extension of $(K,\nu)$ if and only if every minimal field of definition for $\o$ is an immediate extension of $(K,\nu)$. Further, assume that $(K,\nu)$ is henselian. Then a minimal field of definition for $\o$ is a defectless (resp. tame, purely wild) extension of $(K,\nu)$ if and only if every minimal field of definition for $\o$ is a defectless (resp. tame, purely wild) extension of $(K,\nu)$.
	\end{Corollary}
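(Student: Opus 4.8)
The plan is to deduce the Corollary directly from the Proposition that precedes it, which asserts that for any two minimal fields of definition $K(a)$ and $K(a')$ for $\o$ (coming from extensions $\overline{\o}$, $\overline{\o^\prime}$ with minimal pairs $(a,\g)$ and $(a',\g)$) one has $(\overline{\o}K(a):\nu K) = (\overline{\o^\prime}K(a'):\nu K)$ and $K(a)\overline{\o} = K(a')\overline{\o^\prime}$. Combined with Lemma \ref{lemma extns of o}, which gives $\deg Q = \deg Q'$, hence $[K(a):K] = [K(a'):K]$, we see that the degree, the ramification index, and the residue field of a minimal field of definition are all independent of the chosen minimal field of definition.

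First I would record that $[K(a):K] = [K(a'):K]$ by Lemma \ref{lemma extns of o}, and that $(\overline{\o}K(a):\nu K) = (\overline{\o^\prime}K(a'):\nu K)$ and $K(a)\overline{\o} = K(a')\overline{\o^\prime}$ by the Proposition. The immediate case is then purely formal: $K(a)|K$ is immediate iff $(\overline{\o}K(a):\nu K)=1$ and $K(a)\overline{\o}=K\nu$, and both conditions are insensitive to the choice of minimal field of definition; so one is immediate iff all are. For the henselian statements I would first note that when $(K,\nu)$ is henselian, so is $K(a)$ (it is algebraic over a henselian field), and the valuation extends uniquely, so $\overline{\o}K(a)$, $K(a)\overline{\o}$ and $[K(a):K]$ are genuine invariants of the extension $K(a)|K$ with no ambiguity coming from the choice of $\overline{\nu}$. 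Then: $K(a)|K$ is defectless iff $[K(a):K] = (\overline{\o}K(a):\nu K)[K(a)\overline{\o}:K\nu]$, and since all three quantities are the same for $K(a')$, defectlessness transfers. For tameness, recall the conditions (TE1)--(TE3): (TE3) is defectlessness, just handled; (TE1) says $\ch K\nu \nmid (\overline{\o}K(a):\nu K)$, which depends only on the ramification index; (TE2) says $K(a)\overline{\o}|K\nu$ is separable, which depends only on the residue field. All three are determined by the invariants shared by $K(a)$ and $K(a')$, so tameness transfers. For purely wild, use the characterization $F\sect K^r = K$, equivalently $\overline{\o}F/\nu K$ is a $p$-group and $F\overline{\o}|K\nu$ is purely inseparable; again both conditions are functions solely of $(\overline{\o}K(a):\nu K)$ and $K(a)\overline{\o}$, hence transfer.

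I do not expect a serious obstacle here, since the Corollary is essentially a restatement of the Proposition once one unpacks the definitions of immediate, defectless, tame and purely wild extensions in terms of the ramification index and residue field. The only point requiring a word of care is that these four notions are defined for extensions of \emph{henselian} valued fields (or, for ``immediate'', for arbitrary valued field extensions), so one must observe that $K(a)$ is henselian when $K$ is, and that the uniqueness of the extension of $\nu$ to $K(a)$ makes $\overline{\o}K(a)$ and $K(a)\overline{\o}$ well-defined independently of $\overline{\o}$ and $\overline{\nu}$ — a fact already implicit in the statement of the Proposition. With that in place the proof is a one-line invocation of the Proposition together with Lemma \ref{lemma extns of o} for each of the four properties in turn.
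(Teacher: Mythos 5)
Your proposal is correct and matches the paper's approach: the paper simply declares the corollary ``immediate'' from the preceding Proposition, and your write-up fills in exactly the intended details (equality of degrees via Lemma \ref{lemma extns of o}, equality of ramification indices and residue fields via the Proposition, and the observation that each of the four properties is a function of these invariants). No gaps.
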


%--------------------------------------------------------------------------------

\section{Proof of Theorem \ref{Thm IC and min fields}}\label{section proof of thm 1}

Recall that
\[ IC(K(x)|K,\o) := \overline{K}\sect K(x)^h. \]
The fact that $K(x)^h$ is separable over $K$ implies that $IC(K(x)|K,\o)$ is a separable-algebraic extension of $K$.

\begin{Lemma}\label{Lemma K(a,x)|K(a) is weakly pure}
	Take a pair of definition $(a,\g)$ for $\o$. Then $(K(a,x)|K(a),\overline{\o})$ is a weakly pure extension and $IC(K(a,x)|K(a),\overline{\o}) = K(a)^h$. 
\end{Lemma}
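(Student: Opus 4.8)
The plan is to treat the two assertions of the lemma separately; both rest on the observation that, since $a\in K(a)$, the restriction of $\overline{\o}$ to $K(a)(x)=K(a)(y)$ with $y:=x-a$ is the monomial valuation $\overline{\nu}_{a,\g}$ computed \emph{over} $K(a)$, that is, $\overline{\o}\big(\sum_i c_i y^i\big)=\min_i\{\overline{\nu}c_i+i\g\}$ for $c_i\in K(a)$.

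For the weak purity, I would split into two cases. If $\g$ is not torsion modulo $\overline{\nu}K(a)$ — which is automatic when $\o$ is value transcendental, since then $\g\notin\overline{\nu}\overline{K}$ and $\overline{\nu}\overline{K}$ is divisible and contains $\overline{\nu}K(a)$ — then $x-a=y$ witnesses condition (PE1) over $K(a)$, so $(K(a,x)|K(a),\overline{\o})$ is even pure. Otherwise let $e\in\NN$ be minimal with $e\g\in\overline{\nu}K(a)$, choose $b\in K(a)$ with $\overline{\nu}b=-e\g$, and note $\overline{\o}\,b(x-a)^e=0$. A short computation with the monomial valuation — comparing, for a hypothetical polynomial relation $\sum_k p_k\big(b(x-a)^e\big)^k$ with $p_k\in\mathcal{O}_{K(a)}$, its $\overline{\o}$-value (which equals $\min_k\overline{\nu}p_k$) with its residue (which equals $\sum_k(p_k\overline{\nu})\big(b(x-a)^e\overline{\o}\big)^k$) — shows that $b(x-a)^e\overline{\o}$ is transcendental over $K(a)\overline{\nu}$. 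This is exactly the defining clause of a weakly pure extension, so $(K(a,x)|K(a),\overline{\o})$ is weakly pure in all cases.

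The inclusion $K(a)^h\subseteq IC(K(a,x)|K(a),\overline{\o})$ is immediate: $K(a)^h$ is algebraic over $K(a)$ and is contained in the henselian field $K(a,x)^h$. For the reverse inclusion I would first reduce to a henselian base. One has $K(a,x)^h=K(a)^h(x)^h$ (each of these fields is henselian and contains $K(a)$ and $x$, hence contains the other), and since a henselization is immediate, $\overline{\o}$ on $K(a)^h(x)$ is still the monomial valuation $\overline{\nu}_{a,\g}$, now over the henselian field $L:=K(a)^h$. It therefore suffices to prove: \emph{if $L$ is henselian, $a\in L$, and $\overline{\o}=\overline{\nu}_{a,\g}$ on $L(x)$, then $\overline{K}\cap L(x)^h=L$.} Take $z\in\overline{K}\cap L(x)^h$; it is separable over $L$ because $L(x)^h|L$ is separable. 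Since $L(x)$ is dense in its henselization $L(x)^h$, there is $\theta\in L(x)$ with $\overline{\o}(z-\theta)>\kras(z,L)$.

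The crucial step is to pass from $\theta\in L(x)$ to an element of $L$. Expand $\theta$ as a Laurent series $\theta=\sum_{i\ge -N}\lambda_i y^i$ in $y=x-a$ with coefficients $\lambda_i\in L$; on this expansion $\overline{\o}$ is computed termwise. I claim $c:=\lambda_0\in L$ already satisfies $\overline{\o}(z-c)\ge\overline{\o}(z-\theta)$. Indeed, $z-\theta=(z-c)-\sum_{i\ne 0}\lambda_i y^i$, and the two summands on the right cannot cancel to any order: in the value transcendental case $\overline{\o}(\lambda_i y^i)=\overline{\nu}\lambda_i+i\g\notin\overline{\nu}\overline{K}$ for every $i\ne 0$, whereas $\overline{\o}(z-c)\in\overline{\nu}\overline{K}\cup\{\infty\}$; in the residue transcendental case, fixing $t\in\overline{K}$ with $\overline{\nu}t=\g$, any nonzero $\sum_{i\ne 0}\lambda_i y^i$, after normalizing its value to $0$, has residue a nonconstant Laurent polynomial in the transcendental element $(y/t)\overline{\o}$, while $z-c\in\overline{K}$ has residue in $\overline{K}\overline{\nu}$. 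In either case $\overline{\o}(z-\theta)=\min\{\overline{\o}(z-c),\overline{\o}(\sum_{i\ne 0}\lambda_i y^i)\}\le\overline{\o}(z-c)$, proving the claim. Then $\overline{\o}(z-c)>\kras(z,L)$, so Lemma \ref{Lemma Krasner} (with $L,z,c$ in place of $K,a,y$) gives $z\in L(c)^h=L^h=L$. Hence $\overline{K}\cap L(x)^h=L$, and consequently $IC(K(a,x)|K(a),\overline{\o})=K(a)^h$. I expect the verification of the no-cancellation claim — especially the residue transcendental subcase, where one must normalize the non-constant part by the appropriate monomial before reducing — to be the main obstacle; the remaining steps are either formal or routine computations with the monomial valuation.
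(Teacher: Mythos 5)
Your proof of weak purity is correct and is essentially the paper's own argument: in the value transcendental case $x-a$ witnesses (PE1) over $K(a)$ because $\g\notin\overline{\nu}\overline{K}$ and $\overline{\nu}\overline{K}$ is divisible; in the residue transcendental case one takes $e$ minimal with $e\g\in\overline{\nu}K(a)$ and refutes a hypothetical algebraic relation for $b(x-a)^e\overline{\o}$ by computing $\overline{\nu}_{a,\g}$ term by term. For the second assertion, however, the paper does not argue directly at all: it invokes Lemma 3.7 of the reference of Kuhlmann, which states precisely that a weakly pure extension $(K(x)|K,\o)$ has $IC(K(x)|K,\o)=K^h$. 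You instead attempt to prove this from scratch, and that attempt has a genuine gap.

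The gap is the step ``since $L(x)$ is dense in its henselization $L(x)^h$, there is $\theta\in L(x)$ with $\overline{\o}(z-\theta)>\kras(z,L)$.'' A valued field is in general \emph{not} dense in its henselization; this holds in rank one because the henselization embeds into the completion, but fails in higher rank. For instance, take $k(u,v)$ with the rank-two valuation of Example \ref{example insep IC} and $\ch k\nu\neq 2$: the element $z=\sqrt{1+v}$ lies in the henselization by Hensel's Lemma applied at $1$, yet no $\theta\in k(u,v)$ satisfies $\nu(z-\theta)\geq(1,0)$, since $\nu(z+\theta)=0$ would then force $\nu(\theta^2-1-v)\geq(1,0)$ and hence $1+v$ to be a square in the residue field $k(v)$ of the rank-one coarsening. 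What your argument actually requires is the weaker assertion that every $z\in\overline{K}\sect L(x)^h$ can be approximated from $L(x)$ beyond its Krasner constant; but that is essentially a converse to Krasner's Lemma, it is nowhere available in the paper, and it cannot be extracted a posteriori from the desired conclusion $z\in L$ without circularity. A secondary, repairable issue is that you expand $\theta\in L(x)$ as a finite Laurent series in $y=x-a$, whereas a general element of $L(x)$ is a quotient of polynomials in $y$; the extraction of the constant term and the no-cancellation argument must be redone for quotients. The clean route to the second assertion is the one the paper takes: cite the general fact that weakly pure extensions have implicit constant field equal to the henselization of the base.
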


\begin{proof}
	We first consider that $\o$ is value transcendental. Then $\overline{\o}(x-a)$ is not a torsion element modulo $\overline{\nu}K(a)$ and hence $(K(a,x)|K(a),\overline{\o})$ is a pure extension.
	
	\pars We now assume that $\o$ is residue transcendental. Choose the smallest positive integer $e$ such that $e\g\in\overline{\nu}K(a)$. Take $d\in K(a)$ such that $\overline{\nu}d=-e\g$, that is, $\overline{\o}d(x-a)^e = 0$. Suppose that $d(x-a)^e\overline{\o}$ is algebraic over $K(a)\overline{\nu}$. Then there exist $c_i\in\mathcal{O}_{K(a)}$ such that 
	\[ (d(x-a)^e\overline{\o})^n + (c_{n-1}\overline{\nu})(d(x-a)^e\overline{\o})^{n-1}+\dotsc + (c_1\overline{\nu}) d(x-a)^e\overline{\o} + c_0\overline{\nu} =0. \]
	It follows that 
	\[ \overline{\o} \mathlarger{[}(d(x-a)^e)^n + c_{n-1} (d(x-a)^e)^{n-1} +\dotsc + c_1 d(x-a)^e + c_0  \mathlarger{]} > 0.  \]
	The fact that $(a,\g)$ is a pair of definition for $\o$ implies that $\overline{\o} = \overline{\nu}_{a,\g}$. Let $g(x)$ be the polynomial given by $g(x):= (d(x-a)^e)^n + \sum_{i=0}^{n-1} c_i (d(x-a)^e)^i $. By definition, 
	\[ \overline{\o}g = \min \{ \overline{\o}(d(x-a)^e)^n, \overline{\o}[c_i (d(x-a)^e)^i] \mid i=0,\dotsc,n-1 \}. \]
	For any $i\in\{0,\dotsc,n-1\}$, we observe that 
	\[ \overline{\o}[c_i (d(x-a)^e)^i] = \overline{\nu}c_i \geq 0. \] 
	Further, $\overline{\o}(d(x-a)^e)^n = 0$. It follows that $\overline{\o}g=0$ which contradicts our previous observation. Thus $d(x-a)^e\overline{\o}$ is transcendental over $K(a)\overline{\nu}$. Hence $(K(a,x)|K(a),\overline{\o})$ is weakly pure.
	
	\pars It now follows from Lemma 3.7 of [\ref{Kuh value groups residue fields rational fn fields}] that $IC(K(a,x)|K(a),\overline{\o}) = K(a)^h$.
\end{proof}

\pars We now provide a \textbf{proof of Theorem \ref{Thm IC and min fields}}.

\begin{proof}
	 Observe that $IC(K(x)|K,\o) = \overline{K}\sect K(x)^h \subseteq \overline{K}\sect K(a,x)^h = IC(K(a,x)|K(a),\overline{\o})$. It then follows from Lemma \ref{Lemma K(a,x)|K(a) is weakly pure} that
	\begin{equation*}
		IC(K(x)|K,\o) \subseteq K(a)^h.
	\end{equation*}

We first assume that $\o$ is residue transcendental. The fact that $K(b_1)$ is contained in the absolute inertia field of $(K,\nu)$ implies that $K(b_1,x)$ is contained in the absolute inertia field of $(K(x),\o)$ [Theorem 3(2), \ref{Dutta Kuh abhyankars lemma}]. Further, $K(b_1)\overline{\nu}$ is a subfield of $K(a)\overline{\nu}$, which in turn is a subfield of $K(x)\o$ by Remark \ref{Remark r.t. value grp res field}. It now follows from [Theorem 3(2), \ref{Dutta Kuh abhyankars lemma}] that $K(b_1,x)$ is contained in $K(x)^h$. Thus $K(b_1)\subseteq \overline{K}\sect K(x)^h = IC(K(x)|K,\o)$. Since the implicit constant field is henselian, it follows that $K(b_1)^h \subseteq IC(K(x)|K,\o)$.

\pars We now assume that $\o$ is value transcendental. Consider the value transcendental extension $(K(b_2,x)|K(b_2),\overline{\o})$. Take $\g\in\overline{\o}\overline{K}(x)\setminus\overline{\nu}\overline{K}$ such that $(a,\g)$ is a minimal pair of definition for $\o$. Then $(a,\g)$ is a pair of definition for $\overline{\o}$ over $K(b_2)$. Suppose that $(a,\g)$ is not a minimal pair of definition for $\overline{\o}$ over $K(b_2)$. Then there exists some $a^\prime\in\overline{K}$ such that $\overline{\nu}(a-a^\prime)>\g$ and $[K(b_2,a^\prime):K(b_2)] < [K(b_2,a):K(b_2)]$. Consequently, $[K(b_2,a^\prime):K] < [K(b_2,a):K]$. The fact that $b_2\in K(a)$ then implies that $[K(b_2,a^\prime):K] < [K(a):K]$. We further observe that the fact $\overline{\nu}(a-a^\prime)>\g$ implies that $(a^\prime,\g)$ is a pair of definition for $\o$. Then $(a,\g)$ being a minimal pair of definition for $\o$ implies that $[K(a):K]\leq [K(a^\prime):K]$. We thus have the relation:
\[ [K(a^\prime):K] \leq [K(b_2,a^\prime):K] < [K(a):K] \leq [K(a^\prime):K], \]
which is not possible. Hence $(a,\g)$ is a minimal pair of definition for $\overline{\o}$ over $K(b_2)$. Applying the observations of Remark \ref{Remark value group res field} to the extension $(K(b_2,x)|K(b_2),\overline{\o})$, we obtain that $K(b_2,x)\overline{\o} = K(a)\overline{\nu}$. It follows that
\[  K(b_2,x)\overline{\o} = K(x)\o.   \]
The fact that $K(b_2)$ is contained in the absolute ramification field of $(K,\nu)$ implies that $K(b_2,x)$ is contained in the absolute ramification field of $(K(x),\o)$ [Theorem 3(1), \ref{Dutta Kuh abhyankars lemma}]. Further, $\overline{\nu}K(b_2)$ is a subgroup of $\overline{\nu}K(a)$. We thus obtain from Remark \ref{Remark value group res field} that $\overline{\nu}K(b_2)$ is a subgroup of $\o K(x)$. Hence it follows from [Theorem 3(1), \ref{Dutta Kuh abhyankars lemma}] that $K(b_2,x)$ is contained in the absolute inertia field of $(K(x),\o)$. As a consequence we obtain that
\[ \overline{\o}K(b_2,x) = \o K(x). \]
Thus $(K(b_2,x)|K(x),\overline{\o})$ is an immediate extension. The fact that the henselization is an immediate extension implies that $(K(b_2,x)^h|K(x)^h,\overline{\o})$ is also immediate. Now, $(K(b_2,x)^h|K(x)^h,\overline{\o})$ is an immediate subextension of the defectless extension $(K(x)^i|K(x)^h,\overline{\o})$. Hence, $K(x)^h = K(b_2,x)^h$. Thus $b_2\in K(x)^h$ and hence $K(b_2)\subseteq \overline{K}\sect K(x)^h = IC(K(x)|K,\o)$. Since the implicit constant field is henselian, it follows that $K(b_2)^h \subseteq IC(K(x)|K,\o)$. We have thus proved the theorem.
\end{proof}

\begin{Corollary}\label{Corollary a in K^r}
	Take a minimal field of definition $K(a)$ for $\o$. If $\o$ is residue transcendental and $a$ is contained in the absolute inertia field of $(K,\nu)$, then $IC(K(x)|K,\o) = K(a)^h$. If $\o$ is value transcendental and $a$ is contained in the absolute ramification field of $(K,\nu)$, then $IC(K(x)|K,\o) = K(a)^h$.
\end{Corollary}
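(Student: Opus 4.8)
The plan is to derive this directly from Theorem \ref{Thm IC and min fields} by observing that the stated hypothesis forces the lower and upper bounds in the relevant chain of inclusions to coincide. First I would invoke Theorem \ref{Thm IC and min fields}, fixing (as in its statement) an extension $\overline{\o}$ of $\o$ to $\overline{K}(x)$ having $(a,\g)$ as a minimal pair of definition, an extension of $\overline{\o}$ to $\overline{K(x)}$, and elements $b_1,b_2\in\overline{K}$ with $K(b_1) = K(a)\sect K^i$ and $K(b_2) = K(a)\sect K^r$.

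Suppose first that $\o$ is residue transcendental and $a\in K^i$. Since $K^i$ is a subfield of $K^\sep$ containing $K$, the whole field $K(a)$ is contained in $K^i$, so $K(b_1) = K(a)\sect K^i = K(a)$ and hence $K(b_1)^h = K(a)^h$. Substituting this into (\ref{eqn r.t. IC and min fields}) gives $K(a)^h \subseteq IC(K(x)|K,\o)\subseteq K(a)^h$, i.e. $IC(K(x)|K,\o) = K(a)^h$.

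The value transcendental case is handled identically, with $K^r$ in place of $K^i$ and (\ref{eqn IC and min fields}) in place of (\ref{eqn r.t. IC and min fields}): if $a\in K^r$ then $K(a)\subseteq K^r$, so $K(b_2) = K(a)\sect K^r = K(a)$, whence $K(b_2)^h = K(a)^h$ and the sandwich in (\ref{eqn IC and min fields}) collapses to $IC(K(x)|K,\o) = K(a)^h$. There is essentially no obstacle beyond correctly matching the hypotheses of Theorem \ref{Thm IC and min fields}; the only point that deserves a word of care is the passage from ``$a$ lies in $K^i$ (resp.\ $K^r$)'' to ``$K(a)\subseteq K^i$ (resp.\ $K^r$)'', which is immediate because these absolute ramification-theoretic fields are intermediate fields of $K^\sep|K$.
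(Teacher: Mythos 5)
Your proposal is correct and is exactly the argument the paper intends: the corollary is stated without proof as an immediate consequence of Theorem \ref{Thm IC and min fields}, and the point is precisely that $a\in K^i$ (resp.\ $a\in K^r$) forces $K(b_1)=K(a)\sect K^i=K(a)$ (resp.\ $K(b_2)=K(a)\sect K^r=K(a)$), collapsing the sandwich (\ref{eqn r.t. IC and min fields}) (resp.\ (\ref{eqn IC and min fields})) to an equality. Your remark that $K^i$ and $K^r$ are intermediate fields of $K^\sep|K$, so that $a\in K^i$ implies $K(a)\subseteq K^i$, is the only point needing mention, and you handle it correctly.
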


In this next result, we use the notion of homogenoeus sequences to observe that $IC(K(x)|K,\o) = K(a)$ whenever $(K(a)|K,\overline{\nu})$ is a tame extension of henselian valued fields.  

\begin{Proposition}\label{Prop IC = K(a) when K(a)|K is tame}
	Take a minimal field of definition $K(a)$ for $\o$. Assume that $(K,\nu)$ is henselian and $K(a)$ is contained in the absolute ramification field of $(K,\nu)$. Then $IC(K(x)|K,\o) = K(a)$.
\end{Proposition}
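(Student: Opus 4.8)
The plan is to establish separately the two inclusions making up $IC(K(x)|K,\o)=K(a)$. For ``$\subseteq$'': Theorem \ref{Thm IC and min fields} gives $IC(K(x)|K,\o)\subseteq K(a)^h$ in both the residue and the value transcendental cases, and since $(K,\nu)$ is henselian we have $K^h=K$, so $K(a)^h=K(a).K^h=K(a)$. When $\o$ is value transcendental the reverse inclusion is equally immediate: $K(a)\subseteq K^r$ forces $K(b_2)=K(a)\sect K^r=K(a)$, so $K(a)=K(b_2)^h\subseteq IC(K(x)|K,\o)$ by \eqref{eqn IC and min fields} (this is Corollary \ref{Corollary a in K^r} together with henselianity). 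Hence the only case needing work is $\o$ residue transcendental, where \eqref{eqn r.t. IC and min fields} supplies merely $K(b_1)^h\subseteq IC(K(x)|K,\o)$ with $K(b_1)=K(a)\sect K^i$ possibly a proper subfield of $K(a)$; there it remains to show $a\in K(x)^h$, after which ``$\subseteq$'' yields $IC(K(x)|K,\o)=K(a)$. We may of course assume $a\notin K$, the case $a\in K$ being covered by Lemma \ref{Lemma K(a,x)|K(a) is weakly pure}.

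To prove $a\in K(x)^h$ it suffices to prove the inequality $\g>\kras(a,K)$: since $a\in K^r\subseteq K^\sep$ is separable over $K$ and $\overline{\o}(a-x)=\g>\kras(a,K)$, Lemma \ref{Lemma Krasner} with $y=x$ then gives $a\in K(x)^h$. I would obtain $\g>\kras(a,K)$ in two steps. First, $\g>\d(a,K)$: the tame extension $K(a)|K$ is defectless, so $\d(a,K)$ exists by Theorem 1.3 of [\ref{AB SKK alg max defectless}], and by the discussion preceding Theorem \ref{Thm min pair comp dist chain}, were $\g\le\d(a,K)$ then the second term $c_1$ of a complete distinguished chain of $a$ (which exists by Theorem \ref{thm complete dist chain}) would give a pair of definition $(c_1,\g)$ for $\o$ with $[K(c_1):K]<[K(a):K]$, contradicting the minimality of $(a,\g)$. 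Second, $\d(a,K)\le\kras(a,K)$ always, for if $\overline{\nu}(a-z)>\kras(a,K)$ for some $z\in\overline K$ with $[K(z):K]<[K(a):K]$ then Lemma \ref{Lemma Krasner} (with $y=z$) would force $a\in K(z)^h=K(z)$, which is impossible. So the whole matter reduces to the reverse inequality $\kras(a,K)\le\d(a,K)$, i.e. to the equality $\d(a,K)=\kras(a,K)$ for the tame extension $K(a)|K$.

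This equality is where the theory of homogeneous sequences is used, and I expect it, with its attendant bookkeeping, to be the main obstacle. Take a complete distinguished chain $a=c_0,c_1,\dots,c_n$ of $a$ over $K$ with $c_n\in K$, so that $K=K(c_n)\subseteq K(c_{n-1})\subseteq\dots\subseteq K(c_0)=K(a)$ and $\d(c_0,K)>\d(c_1,K)>\dots$. The structural claim to verify is that for each $j$ the difference $c_j-c_{j+1}$ is strongly homogeneous over $K(c_{j+1})$, equivalently $\d(c_j,K)=\overline{\nu}(c_j-c_{j+1})=\kras(c_j,K(c_{j+1}))$; this is exactly the point where tameness of the subextensions is needed, and it fails in the defect or wild setting, compare Theorem \ref{Thm gamma and kras}(iii). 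Granting it, put $b_0:=0$ and $b_i:=c_{n-i}$ for $1\le i\le n$, so that $b_n=c_0=a$. Since $\g>\d(a,K)$ we have $\overline{\o}(x-z)=\overline{\nu}(a-z)$ for every $z$ of degree $<[K(a):K]$ over $K$ (the summand $x-a$ having strictly larger value $\g$), and combining this with the strict descent of the $\d(c_j,K)$ one checks that $(b_i)_{0\le i\le n}$ is a homogeneous sequence for $x$ over $K$: at step $i$ the element $b_i-b_{i-1}$ is, after subtracting $c_n\in K$ if $i=1$ and nothing otherwise, strongly homogeneous over $L_{i-1}:=K(b_0,\dots,b_{i-1})$, and its value lies strictly below $\overline{\o}(x-b_i)$. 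Finally, every term of a homogeneous sequence for $x$ lies in $K(x)^h$: arguing by induction on $i$, Lemma \ref{Lemma Krasner} applied over $L_{i-1}$ to the strongly homogeneous element $b_i-b_{i-1}-d$ and the element $x-b_{i-1}-d$ (with $d$ as above), together with $L_{i-1}(x)^h=K(x)^h$ from the inductive hypothesis, gives $b_i\in K(x)^h$. In particular $a=b_n\in K(x)^h$, completing the proof. The two delicate points are the structural claim about tame distinguished chains and the verification that the sequence $(b_i)$ exhausts $K(a)$ rather than stalling at a proper subextension — the latter again resting on the minimality of $(a,\g)$.
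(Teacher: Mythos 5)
Your reductions are sound: the value transcendental case does follow at once from Corollary \ref{Corollary a in K^r}, the inclusion $IC(K(x)|K,\o)\subseteq K(a)^h=K(a)$ is Theorem \ref{Thm IC and min fields}, and the inequality $\g>\d(a,K)$ is exactly the argument preceding Theorem \ref{Thm min pair comp dist chain} (tame $\Rightarrow$ defectless, so $\d(a,K)$ exists and a complete distinguished chain exists by Theorem \ref{thm complete dist chain}; minimality of $(a,\g)$ then forces $\g>\d(a,K)$). The gap is precisely the step you label ``the structural claim to verify'' and then proceed to \emph{grant}: that the successive differences $c_j-c_{j+1}$ of a complete distinguished chain of a tame extension are strongly homogeneous over $K(c_{j+1})$ (equivalently, that $a$ admits a finite homogeneous sequence $(a_1,\dotsc,a_n)$ with $K(a_n)=K(a)$). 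This is the one place where tameness actually does work, and it is not a bookkeeping matter: for wild extensions the analogous statement fails (your own example of why $\d(a,K)\le\kras(a,K)$ can be strict is the Artin--Schreier case), so any proof must use (TE1)--(TE3) in an essential way. The paper does not prove it either -- it imports it wholesale as Proposition 3.11 of [\ref{Kuh corrections value groups, residue fields, bad places}] and then performs the conversion of the homogeneous sequence for $a$ into one for $x$, which is the part you do carry out correctly. Without that input, your argument has no content at its core. A secondary flaw: the definition of a distinguished pair $(c_j,c_{j+1})$ imposes no containment $K(c_{j+1})\subseteq K(c_j)$, so the tower $K=K(c_n)\subseteq\dotsb\subseteq K(c_0)$ you write down, and hence the identification of $K(b_0,\dotsc,b_{i-1})$ with $K(c_{n-i+1})$, is unjustified; the cited Proposition 3.11 delivers a sequence with the required field-theoretic behaviour built in, which is another reason the paper works with homogeneous sequences rather than distinguished chains.

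A further remark on the architecture of your proof: if the equality $\d(a,K)=\kras(a,K)$ for tame extensions were actually established, then $\g>\d(a,K)$ would give $\g>\kras(a,K)$ and the whole proposition would follow in one line from Lemma \ref{Lemma Krasner} (or Theorem \ref{Thm gamma and kras}(i)), with no homogeneous sequence for $x$ needed at all. Your final paragraph silently abandons that plan and instead runs the iterated-Krasner climb along the sequence $(b_i)$, which does not need $\kras(a,K)\le\d(a,K)$ but does need the strong homogeneity of each difference together with the value inequalities $\overline{\o}(x-b_i)>\overline{\nu}(b_i-b_{i-1}-d)$. Those value inequalities are verified correctly (and match the paper's case analysis around whether $\overline{\nu}(a-a_n)\ge\g$), so the proof would be complete \emph{modulo} the structural claim -- but that claim is the theorem's actual substance, and it is missing.
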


\begin{proof}
	Take $\g\in\overline{\o}\overline{K}(x)$ such that $(a,\g)$ is a minimal pair of definition for $\o$. The fact that $(K(a)|K,\overline{\nu})$ is a tame extension of henselian valued fields implies that there exists a finite homogeneous sequence $(a_1, \dotsc , a_n)$ for $a$ over $(K,\nu)$ such that $a\in K(a_n)$ [Proposition 3.11, \ref{Kuh corrections value groups, residue fields, bad places}]. It follows from Lemma 3.7 of [\ref{Kuh corrections value groups, residue fields, bad places}] that $K(a_1,\dotsc, a_n) = K(a_n) \subseteq K(a)$. Consequently,
	\[ K(a) = K(a_n). \]
	Take any $i<n$. Suppose that $\overline{\nu}(a-a_i) \geq \g$. Then $(a_i,\g)$ is also a pair of definition for $\o$. The minimality of $(a,\g)$ then implies that $[K(a):K] \leq [K(a_i):K]$. It follows from the definition of homogeneous sequences that $a_n \notin K(a_1,\dotsc, a_{n-1})$. Hence we have the relation 
	\[ [K(a):K] \leq [K(a_i):K]< [K(a_n):K] = [K(a):K], \]
	which is not possible. Thus,
	\[ \overline{\nu}(a-a_i) < \g = \overline{\o}(a-x) \text{ for all } i<n. \]
	If $\overline{\nu}(a-a_n) < \g$ then $(a_1, \dotsc , a_n)$ forms a homogeneous sequence for $x$ over $(K,\nu)$ [Lemma 3.6, \ref{Kuh corrections value groups, residue fields, bad places}]. 
	
	Otherwise, assume that $\overline{\nu}(a-a_n)\geq \g$. Then $(a_n,\g)$ is also a minimal pair of definition for $\o$. By definition, $a_i - a_{i-1}$ is a homogeneous approximation of $a-a_{i-1}$ over $(K(a_{i-1}),\overline{\nu})$ for all $i=1,\dotsc, n$. Hence there exists $d_{i-1}\in K(a_{i-1})$ such that $a_i - a_{i-1}-d_{i-1}$ is strongly homogeneous over $(K(a_{i-1}),\overline{\nu})$ and $\overline{\nu}(a-a_i) > \overline{\nu}(a_i - a_{i-1}-d_{i-1})$. Take any $i<n$. Then $\overline{\nu}(a-a_i)<\g = \overline{\o}(a-x)$ and consequently,
	\[ \overline{\o}(x-a_i) = \overline{\nu}(a-a_i)> \overline{\nu}(a_i - a_{i-1}-d_{i-1}). \]
	Thus $a_i - a_{i-1}$ is a homogeneous approximation of $x-a_{i-1}$ over $(K(a_{i-1}),\overline{\nu})$ for all $i<n$. Suppose that $\overline{\nu}(a_n - a_{n-1}-d_{n-1})\geq \g$. The fact that $(a_n,\g)$ is a minimal pair of definition for $\o$ then implies that $(a_{n-1}+d_{n-1},\g)$ is a pair of definition for $\o$ and $[K(a_n):K] \leq [K(a_{n-1}+d_{n-1}):K]$. However, the observation $K(a_{n-1}+d_{n-1}) \subseteq K(a_{n-1})\subsetneq K(a_n)$ now yields a contradiction. It follows that 
	\[ \overline{\o}(x-a_n) = \g > \overline{\nu}(a_n - a_{n-1}-d_{n-1}).   \]
    Thus $a_n - a_{n-1}$ is a homogeneous approximation of $x-a_{n-1}$ over $(K(a_{n-1}),\overline{\nu})$. We have thus obtained that $(a_1,\dotsc, a_n)$ forms a homogeneous sequence for $x$ over $(K,\nu)$.  
	\pars The observation $K(a)=K(a_n)$, coupled with Lemma \ref{Lemma K(a,x)|K(a) is weakly pure}, then implies that $(K(a_n,x)|K(a_n),\overline{\o})$ is weakly pure. The assertion of the proposition now follows from Theorem 3.9 of [\ref{Kuh corrections value groups, residue fields, bad places}].
\end{proof}
%-----------------------------------------------------------------------------------------------------

\section{Proof of Theorem \ref{Thm gamma and kras}}\label{section proof of thm 2}

\begin{Lemma}\label{Lemma f/g omega in K(a)v}
	Assume that $\o$ is residue transcendental and take a minimal pair of definition $(a,\g)$ for $\o$. Let $f_1(x),\dotsc, f_t(x), g_1(x), \dotsc, g_s(x)$ be polynomials over $K$ and $d\in K(a)$ such that $\deg f_i, \, \deg g_j < [K(a):K]$ for all $i,j$, and $\overline{\o}(df_1\dotsc f_t) = \o(g_1\dotsc g_s)$. Then
	\[ \frac{df_1,\dotsc f_t }{g_1\dotsc g_s}\overline{\o} \in K(a)\overline{\nu}.  \]
\end{Lemma}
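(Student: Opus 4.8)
The plan is to reduce the statement to a single elementary fact: if $h(x)\in K[x]$ with $\deg h<[K(a):K]$, then $\overline{\o}h=\overline{\nu}h(a)$ and $\tfrac{h(x)}{h(a)}\overline{\o}=1$. Granting this for each of $f_1,\dots,f_t,g_1,\dots,g_s$, the lemma will follow by a direct multiplication, in the spirit of Remark~\ref{Remark r.t. value grp res field}.

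First I would pin down the roots. Let $Q(x)$ be the minimal polynomial of $a$ over $K$, so $\deg Q=[K(a):K]$, and factor one of the given polynomials as $h(x)=c\prod_k(x-b_k)$ with $c\in K$ and $b_k\in\overline{K}$. If some $b_k$ had $\overline{\nu}(a-b_k)\geq\g$, then by the lemma relating pairs of definition we would get $\overline{\nu}_{b_k,\g}=\overline{\nu}_{a,\g}=\overline{\o}$, so $(b_k,\g)$ would be a pair of definition for $\o$ with $[K(b_k):K]\leq\deg h<\deg Q=[K(a):K]$, contradicting the minimality of $(a,\g)$. Hence $\overline{\nu}(a-b_k)<\g$ for every $k$; in particular $Q\nmid h$, so $h(a)\neq 0$, and $\overline{\o}(x-b_k)=\min\{\g,\overline{\nu}(a-b_k)\}=\overline{\nu}(a-b_k)$.

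Next I would cash this in. For each factor, $\tfrac{x-b_k}{a-b_k}=1+\tfrac{x-a}{a-b_k}$ with $\overline{\o}\bigl(\tfrac{x-a}{a-b_k}\bigr)=\g-\overline{\nu}(a-b_k)>0$, so $\tfrac{x-b_k}{a-b_k}\overline{\o}=1$; taking the product over $k$ gives simultaneously $\overline{\o}h=\overline{\nu}c+\sum_k\overline{\nu}(a-b_k)=\overline{\nu}h(a)$ and $\tfrac{h(x)}{h(a)}\overline{\o}=1$. Applying this to each $f_i$ and $g_j$ and writing
\[ \frac{df_1(x)\cdots f_t(x)}{g_1(x)\cdots g_s(x)}=\Bigl(\frac{df_1(a)\cdots f_t(a)}{g_1(a)\cdots g_s(a)}\Bigr)\cdot\prod_{i=1}^{t}\frac{f_i(x)}{f_i(a)}\cdot\prod_{j=1}^{s}\frac{g_j(a)}{g_j(x)}, \]
I would pass to residues. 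The two products have residue $1$, while the parenthesised factor lies in $K(a)$; since $\overline{\o}(df_1\cdots f_t)=\overline{\nu}d+\sum_i\overline{\nu}f_i(a)$ and $\o(g_1\cdots g_s)=\sum_j\overline{\nu}g_j(a)$ by the previous step, the hypothesis $\overline{\o}(df_1\cdots f_t)=\o(g_1\cdots g_s)$ forces this factor to have value $0$, so its residue lies in $K(a)\overline{\nu}$. Hence the residue of the left-hand side lies in $K(a)\overline{\nu}$, as required.

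The only delicate point --- the main obstacle --- is the first step: it is the \emph{minimality} of $(a,\g)$, not merely its being a pair of definition, that confines the roots of the $f_i$ and $g_j$ to the complement of the closed ball of radius $\g$ about $a$, and this is also what makes the displayed factorization legitimate (no $f_i(a)$ or $g_j(a)$ vanishes). Everything past that is triangle-inequality bookkeeping; one need only note in passing that $\o$ and $\overline{\o}$ agree on $K[x]$, so the mixed notation in the hypothesis is harmless.
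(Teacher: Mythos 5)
Your proof is correct, but it takes a genuinely different route from the paper's. The paper argues by contradiction: if the residue were transcendental over $K\nu$, then Proposition 1.1 of [\ref{APZ2 minimal pairs}] would produce a root $b$ of $df_1\dotsc f_t g_1\dotsc g_s$ with $\overline{\nu}(a-b)\geq\g$ and $[K(b):K]<[K(a):K]$, contradicting the minimality of $(a,\g)$; it then concludes that the residue, being algebraic over $K\nu$ and lying in $K(a,x)\overline{\o}$, must belong to $K(a)\overline{\nu}$ because $K(a)\overline{\nu}$ is the relative algebraic closure of $K\nu$ in $K(a,x)\overline{\o}$ (Remark \ref{Remark r.t. value grp res field}). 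You avoid both of these external inputs: you factor each $f_i$ and $g_j$ into linear factors, use the same minimality argument to force every root $b_k$ to satisfy $\overline{\nu}(a-b_k)<\g$, and then compute the residue outright. The key observation is identical in both arguments (minimality confines the roots of lower-degree polynomials to the complement of the closed ball of radius $\g$ about $a$), but your version is self-contained and yields strictly more information, namely the explicit identification of the residue as $\bigl(\frac{df_1(a)\dotsc f_t(a)}{g_1(a)\dotsc g_s(a)}\bigr)\overline{\nu}$, rather than merely its membership in $K(a)\overline{\nu}$. The remaining bookkeeping is all in order: $h(a)\neq 0$ because $\deg h<\deg Q$, the residue of $\tfrac{x-b_k}{a-b_k}=1+\tfrac{x-a}{a-b_k}$ is $1$ since $\g-\overline{\nu}(a-b_k)>0$, the residue map is multiplicative on elements of value zero, and the hypothesis $\overline{\o}(df_1\dotsc f_t)=\o(g_1\dotsc g_s)$ is exactly what makes the parenthesised factor a unit of the valuation ring of $K(a)$.
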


\begin{proof}
	Suppose that $\frac{df_1,\dotsc f_t }{g_1\dotsc g_s}\overline{\o}$ is transcendental over $K\nu$. It follows from Proposition 1.1 of [\ref{APZ2 minimal pairs}] that there exists a root $b$ of $df_1\dotsc f_t g_1 \dotsc g_s \in K(a)[x]$ such that $(b,\g)$ is a pair of definition for $\o$. Hence either $f_i (b) =0$ or $g_j (b) =0$ for some $i,j$, and $\overline{\nu}(a-b)\geq \g$. The fact that $\deg f_i, \, \deg g_j < [K(a):K]$, coupled with the minimality of $(a,\g)$ leads to a contradiction. Hence $\frac{df_1,\dotsc f_t }{g_1\dotsc g_s}\overline{\o}$ is algebraic over $K\nu$. We observe that $\frac{df_1,\dotsc f_t }{g_1\dotsc g_s} \in K(a,x)$. It follows from Remark [\ref{Remark r.t. value grp res field}] that $K(a)\overline{\nu}$ is the relative algebraic closure of $K\nu$ in $K(a,x)\overline{\o}$. We have thus proved the lemma. 
\end{proof}

\pars We now provide a \textbf{proof of Theorem \ref{Thm gamma and kras}}.

\begin{proof}
	We first assume that $\g>\kras(a,K)$. It follows from Lemma \ref{Lemma Krasner} that $a\in K(x)^h$. As a consequence we obtain that $K(a)\subseteq IC(K(x)|K,\o)$. The fact that the implicit constant field is henselian, coupled with Theorem \ref{Thm IC and min fields}, gives us that $IC(K(x)|K,\o) = K(a)^h$.
	
	\parm We now assume that $\o$ is value transcendental, $\nu$ admits a unique extension from $K$ to $K(a)$ and $\g<\kras(a,K)$. Take the minimal polynomial $Q(x)$ of $a$ over $K$ and write $Q(x) = (x-a)(x-a_2)\dotsc(x-a_n)$. Assume that $\overline{\nu}(a-a_i)>\g$ for exactly $j$ many conjugates $a_i$ of $a$, including $a$ itself. The fact that $\g<\kras(a,K)$ implies that $j>1$. Further, whenever $\overline{\nu}(a-a_i)<\g$, then $\overline{\o}(x-a_i) = \overline{\nu}(a-a_i)\in\overline{\nu}\overline{K}$. It follows that
	\[ \o Q = j\g + \overline{\nu}c \]
	for some $c\in\overline{K}$. From Remark \ref{Remark value group res field} we now obtain that
	\[ \o K(x) = \overline{\nu}K(a)\dirsum \ZZ(j\g +\overline{\nu}c). \]
	Now, $(K(a,x)|K(a),\overline{\o})$ being a pure value transcendental extension implies that 
	\[ \overline{\o}K(a,x)=\overline{\nu}K(a)\dirsum\ZZ\g. \]
	The fact that $\o K(x)$ is a subgroup of $\overline{\o}K(a,x)$ then implies that $j\g+\overline{\nu}c\in\overline{\nu}K(a)\dirsum\ZZ\g$. As a consequence we obtain that $\overline{\nu}c\in\overline{\nu}K(a)$. Hence, 
	\[ \o K(x) = \overline{\nu}K(a)\dirsum \ZZ j\g. \]
	Thus $(\overline{\o}K(a,x):\o K(x)) =j$. Taking henselizations, we obtain that
	\[ (\overline{\o}K(a,x)^h:\overline{\o} K(x)^h) = j>1. \]  
	Thus $a\notin K(x)^h$ and consequently, $a\notin IC(K(x)|K,\o)$. It now follows from (\ref{eqn IC and min fields}) that 
	\[ IC(K(x)|K,\o) \subsetneq K(a)^h. \]

	\parm We now assume that $\o$ is residue transcendental, $\g\leq \kras(a,K)$ and there is a unique extension of $\nu$ from $K$ to $K(a)$. If $\overline{\o}K(a,x) \neq \o K(x)$, then $K(a,x)^h \neq K(x)^h$ and consequently $a\notin K(x)^h$. It follows from (\ref{eqn r.t. IC and min fields}) that $IC(K(x)|K,\o)\subsetneq K(a)^h$. 
	\pars We now focus our attention to the case when $\overline{\o}K(a,x) = \o K(x)$. Then $(\overline{\o}K(a,x):\overline{\nu}K(a)) = (\o K(x):\overline{\nu}K(a))$. Take the smallest positive integer $e$ such that $e\g\in\overline{\nu}K(a)$. It follows from Remark \ref{Remark r.t. value grp res field} that $(\overline{\o}K(a,x):\overline{\nu}K(a)) = e$. Further, $e$ is also the smallest positive integer such that $e\o Q\in \overline{\nu}K(a)$, where $Q(x)$ is the minimal polynomial of $a$ over $K$.
	\pars Write 
	\[ Q(x) = (x-a)\dotsc(x-a_j)\dotsc(x-a_n), \]
	where $\overline{\nu}(a-a_i) \geq \g$ for $2\leq i\leq j$, and $\overline{\nu}(a-a_i)< \g$ otherwise. The fact that $e\g, e\o Q\in\overline{\nu}K(a)$ implies that $e\overline{\o}((x-a_{j+1})\dotsc(x-a_n)) \in \overline{\nu}K(a)$. Take polynomials $f(x),g(x),h(x)$ over $K$ with $\deg f, \deg g,\deg h < \deg Q$ such that
	\begin{align*}
		\o g &= \overline{\nu}g(a) = -e\g,\\
		\o h&= -e\overline{\o}((x-a_{j+1})\dotsc(x-a_n)),\\
		\o f&= -e\o Q.
	\end{align*}
	It follows that 
	\[ \o f = -e\o Q = -ej\g -e\overline{\o}((x-a_{j+1})\dotsc(x-a_n)) = \o(g^jh). \]
	From Remark \ref{Remark r.t. value grp res field} we obtain that $K(x)\o = K(a)\overline{\nu}(fQ^e\o)$. It follows from Lemma \ref{Lemma f/g omega in K(a)v} that $\frac{g^jh}{f}\o\in K(a)\overline{\nu}$. Thus,
	\[ K(a)\overline{\nu}((g^jhQ^e)\o) = K(a)\overline{\nu}((\frac{g^jh}{f})\o (fQ^e)\o) = K(a)\overline{\nu}((fQ^e)\o). \]
	We have now obtained that
	\begin{equation}\label{eqn K(x) res field}
		K(x)\o  = K(a)\overline{\nu}((g^jhQ^e)\o).
	\end{equation}
	Applying the observations of Remark \ref{Remark r.t. value grp res field} to the residue transcendental extension $(K(a,x)|K(a),\overline{\o})$, we observe that $K(a,x)\overline{\o} = K(a)\overline{\nu}(g(a)(x-a)^e\overline{\o})$. It follows from Lemma \ref{Lemma f/g omega in K(a)v} that $\frac{g(a)}{g}\overline{\o}\in K(a)\overline{\nu}$. Consequently, 
	\[ K(a)\overline{\nu}(g(x-a)^e\overline{\o}) = K(a)\overline{\nu}((\frac{g(a)}{g})\overline{\o}(g(x-a)^e)\overline{\o}) = K(a)\overline{\nu}(g(a)(x-a)^e)\overline{\o}. \]
	We have thus obtained that
	\begin{equation}\label{eqn K(a,x) res field}
		K(a,x)\overline{\o} = K(a)\overline{\nu}(g(x-a)^e\overline{\o}).
	\end{equation}
	Write $Q^e = \sum_{i=e}^{ne} c_i (x-a)^i \text{ where } c_i\in K(a)$. Then
	\[ g^j h Q^e = \sum_{i=e}^{ne} g^j h c_i (x-a)^i. \]
	Now $(a,\g)$ being a pair of definition for $\o$ implies that $\overline{\o} = \overline{\nu}_{a,\g}$. By definition, $\overline{\nu}c_i + i\g \geq e\o Q = -\o (g^jh)$ for all $i$. It follows that $\overline{\o}(g^jhc_i (x-a)^i) \geq 0$ for all $i$. Thus, 
	\[ (g^jhQ^e)\o = \sum_{i=e}^{ne} (g^jhc_i (x-a)^i)\overline{\o}. \]
	Consider some index $i$ which is not a multiple of $e$. Write $i = te +i^\prime$ where $1\leq i^\prime\leq e-1$. Suppose that $\o(g^jh) + \overline{\nu}c_i + i\g =0$. The facts that $c_i \in K(a)$ and $\o(g^j h) = -e\o Q\in\overline{\nu}K(a)$ implies that $i\g\in\overline{\nu}K(a)$. It follows that $i^\prime\g\in\overline{\nu}K(a)$. However, this contradicts the minimality of $e$. Thus, $\overline{\o}(g^jhc_i (x-a)^i) > 0$ whenever $i$ is not a multiple of $e$. Consequently, 
	\[ (g^jhQ^e)\o = \sum_{i=1}^{n} (g^jhc_{ie} (x-a)^{ie})\overline{\o} = \sum_{i=1}^{n} (g^{j-i}hc_{ie} (g(x-a)^e)^i)\overline{\o}. \]
	The fact that $\overline{\o}(g(x-a)^e) =0$ implies that $\overline{\o}(g^{j-i}hc_{ie})\geq 0$. We can thus rewrite the above expression as 
	\[ (g^jhQ^e)\o = \sum_{i=1}^{n} (g^{j-i}hc_{ie})\overline{\o} (g(x-a)^e\overline{\o})^i. \]
	It follows from Lemma \ref{Lemma f/g omega in K(a)v} that whenever $\overline{\o}(g^{j-i}hc_{ie}) =0$, then the residue $(g^{j-i}hc_{ie})\overline{\o}$ is contained in $K(a)\overline{\nu}$. Hence,
	\begin{equation}\label{eqn g^jhQ^e omega in polynomial}
		(g^jhQ^e)\o = \sum_{i=1}^{n} (g^{j-i}hc_{ie})\overline{\o} (g(x-a)^e\overline{\o})^i \in K(a)\overline{\nu} [g(x-a)^e\overline{\o}].
	\end{equation}
	The coefficient of $(g(x-a)^e\overline{\o})^j$ in $(g^jhQ^e)\o$ is given by $(hc_{je})\overline{\o}$. Now, $c_{je}$ is the coefficient of $(x-a)^{je}$ in $Q^e$. Hence,
	\[ c_{je} = (-1)^{ne-je} E_{ne-je} (0,\dotsc,0, a_2-a,\dotsc,a_2-a, \dotsc, a_n-a,\dotsc, a_n-a), \]
	where $0$ and $a_i-a$ appear $e$ times for each $i$, and $E_{ne-je}(x_1,\dotsc,x_{ne})$ is the $(ne-je)$-th elementary symmetric polynomial in the variables $x_1,\dotsc,x_{ne}$. Each contributing term in the expression $E_{ne-je} (0,\dotsc,0, a_2-a,\dotsc,a_2-a, \dotsc, a_n-a,\dotsc, a_n-a)$ is of the form $(a_{t_1}-a)\dotsc (a_{t_{ne-je}}-a)$, where $a_{t_1},\dotsc, a_{t_{ne-je}} \in \{ a,a_2,\dotsc,a_n \}$. For all $t>j$ we have that $\overline{\nu}(a-a_t) < \g$ and consequently $\overline{\o}(x-a_t) = \overline{\nu}(a-a_t)$. If $\{ a_{t_1},\dotsc, a_{t_{ne-je}} \} = \{ a_{j+1},\dotsc,a_n \}$ with each term appearing $e$ times, then
	\[ \overline{\nu}((a_{t_1}-a)\dotsc (a_{t_{ne-je}}-a)) = e\overline{\nu}((a_{j+1}-a) \dotsc (a_n-a)) = -\o h. \]
	Otherwise, there is some $(a_{t_k}-a)$ whose value is at least $\g$. In that case, we observe that 
	\[ \overline{\nu}((a_{t_1}-a)\dotsc (a_{t_{ne-je}}-a)) > e\overline{\nu}((a_{j+1}-a) \dotsc (a_n-a)). \]
	It then follows from the triangle inequality that 
	\[ \overline{\nu}c_{je} = \overline{\nu}E_{ne-je}(0,\dotsc,0, a_2-a,\dotsc, a_2-a, \dotsc, a_n-a,\dotsc,a_n-a) = -\o h. \]
	Thus
	\begin{equation}\label{eqn hc_{je}omega not zero}
		(hc_{je})\overline{\o}\neq 0.
	\end{equation}
	Now consider some $i>j$. The coefficient of $(g(x-a)^e\overline{\o})^i$ in $(g^jhQ^e)\o$ is given by $(g^{j-i}hc_{ie})\overline{\o}$. We observe that
	\[ c_{ie} = (-1)^{ne-ie} E_{ne-ie} (0,\dotsc,0, a_2-a,\dotsc,a_2-a, \dotsc, a_n-a,\dotsc, a_n-a). \] 
	Take any contributing factor in the expression of $E_{ne-ie}$ of the form $(a_{t_1}-a)\dotsc (a_{t_{ne-ie}-a})$ where $a_{t_1},\dotsc, a_{t_{ne-ie}}\in \{ a,a_2,\dotsc,a_n \}$.
	The facts that $ne-ie<ne-je$ and $\overline{\nu}(a-a_t)<\g$ for all $t>j$ imply that
	\[ \overline{\nu}((a_{t_1}-a)\dotsc (a_{t_{ne-ie}-a})) + (ie-je)\g > e\overline{\nu}((a_{j+1}-a) \dotsc (a_n-a)). \]
	It follows that $\overline{\nu}c_{ie}+ (ie-je)\g +\o h>0$ for all $i$. Consequently,
	\begin{equation}\label{eqn g^{j-i}hc_{ie}omega is zero  }
		(g^{j-i}hc_{ie})\overline{\o} =0 \text{ for all } i>j.
	\end{equation}
	We have thus obtained from (\ref{eqn hc_{je}omega not zero}) and (\ref{eqn g^{j-i}hc_{ie}omega is zero  }) that 
	\begin{equation}
		\deg (g^jhQ^e)\o = j.
	\end{equation}
	In light of (\ref{eqn K(x) res field}), (\ref{eqn K(a,x) res field}) and (\ref{eqn g^jhQ^e omega in polynomial}) we observe that
	\[ [K(a,x)\overline{\o}:K(x)\o] = \deg (g^jhQ^e)\o = j. \]	
	The assumption $\g\leq\kras(a,K)$ implies that $j>1$. Hence $K(x)\o\neq K(a,x)\overline{\o}$ and consequently $a\notin K(x)^h$. It now follows from (\ref{eqn r.t. IC and min fields}) that $IC(K(x)|K,\o)\subsetneq K(a)^h$.	
	\end{proof}

\begin{Corollary}\label{corollary gamm < kras}
	Take a minimal pair of definition $(a,\g)$ for $\o$. Assume that $a$ is separable over $K$, $[K(a):K]$ is a prime number and that there is a unique extension of $\nu$ from $K$ to $K(a)$. Then $IC(K(x)|K,\o) = K^h$ if and only if $\g\leq\kras(a,K)$. Otherwise, $IC(K(x)|K,\o) = K(a)^h$. 
\end{Corollary}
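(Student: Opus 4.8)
The plan is to read both possible values of $IC(K(x)|K,\o)$ off Theorem \ref{Thm gamma and kras}, using the primality of $[K(a):K]$ to rule out every other value. The starting point is the chain
\[ K^h \subseteq IC(K(x)|K,\o) \subseteq K(a)^h, \]
in which the right inclusion is part of Theorem \ref{Thm IC and min fields}, while the left one holds because $K(x)^h$ is a henselian field containing $K$ whose valuation restricts to $\nu$, hence contains the henselization $K^h$; as $K^h$ is algebraic over $K$, this gives $K^h\subseteq\overline{K}\sect K(x)^h = IC(K(x)|K,\o)$.

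Next I would pin down the degree $[K(a)^h:K^h]$. Since $a$ is separable over $K$, its minimal polynomial $Q$ over $K$ is separable, and the number of extensions of $\nu$ from $K$ to $K(a)$ equals the number of monic irreducible factors of $Q$ over $K^h$. The hypothesis of a unique extension therefore says that $Q$ is irreducible over $K^h$, i.e. $[K(a)^h:K^h] = [K^h(a):K^h] = \deg Q = [K(a):K]$, which is prime. Consequently $K^h$ and $K(a)^h$ are the only fields lying between them, and in particular $K^h\subsetneq K(a)^h$; combined with the displayed chain, $IC(K(x)|K,\o)$ equals $K^h$ or $K(a)^h$, and it only remains to determine which.

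Here Theorem \ref{Thm gamma and kras} takes over. Note that $\kras(a,K)$ is defined (as $a$ is separable over $K$ and $a\notin K$), that the uniqueness of the extension permits the form $\kras(a,K) = \max\{\overline{\nu}(a-\s a)\mid\s a\neq a\}$, and that $\g$ and $\kras(a,K)$ are comparable since both lie in the ordered group $\overline{\o}\overline{K}(x)$. If $\g>\kras(a,K)$, then $IC(K(x)|K,\o) = K(a)^h$ by part (i). If $\g\leq\kras(a,K)$ and $\o$ is residue transcendental, part (iii) gives $IC(K(x)|K,\o)\subsetneq K(a)^h$, hence $IC(K(x)|K,\o)=K^h$ by the dichotomy above. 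If $\g\leq\kras(a,K)$ and $\o$ is value transcendental, then $\g\notin\overline{\nu}\overline{K}$ whereas $\kras(a,K)\in\overline{\nu}\overline{K}$, so in fact $\g<\kras(a,K)$, and part (ii) again gives $IC(K(x)|K,\o)\subsetneq K(a)^h$, hence $=K^h$. Since $K^h\neq K(a)^h$, these three implications assemble into the asserted equivalence. The argument is routine given Theorem \ref{Thm gamma and kras}; the only steps demanding any care are the degree identity $[K(a)^h:K^h]=[K(a):K]$ and the remark that in the value transcendental case $\g$ cannot equal $\kras(a,K)$, which is precisely what lets part (ii) be invoked there.
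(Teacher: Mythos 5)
Your proof is correct and follows essentially the same route as the paper's: the sandwich $K^h\subseteq IC(K(x)|K,\o)\subseteq K(a)^h$, the identity $[K(a)^h:K^h]=[K(a):K]$ (which the paper obtains from linear disjointness of $K^h$ and $K(a)$ over $K$ via a cited lemma rather than from irreducibility of $Q$ over $K^h$, though the two are equivalent here), primality to exclude intermediate fields, and Theorem \ref{Thm gamma and kras} to decide between the two endpoints. Your explicit remark that in the value transcendental case $\g\leq\kras(a,K)$ forces $\g<\kras(a,K)$ — so that part (ii) is genuinely applicable — is a small point the paper's proof passes over silently, and it is worth making.
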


\begin{proof}
	The fact that $\nu$ admits a unique extension to $K(a)$ implies that $K^h$ and $K(a)$ are linearly disjoint over $K$ [Lemma 2.1, \ref{Kuh max imm extns of valued fields}]. Thus $[K(a)^h:K^h]=[K(a):K]$ is a prime number. It follows from Theorem \ref{Thm gamma and kras}$(ii)$ and Theorem \ref{Thm gamma and kras}$(iii)$ that $K^h \subseteq IC(K(x)|K,\o) \subsetneq K(a)^h$ if $\g\leq \kras(a,K)$. The fact that $[K(a)^h:K^h]$ is prime then implies that $IC(K(x)|K,\o) = K^h$. If $\g>\kras(a,K)$, then we have $IC(K(x)|K,\o) = K(a)^h$ by Theorem \ref{Thm gamma and kras}$(i)$. We thus have the result.  
\end{proof}

We can employ Theorem \ref{Thm gamma and kras} to give a satisfactory description of $IC(K(x)|K,\o)$ when $(K,\nu)$ is either henselian or has rank one. We first prove the following lemma. 

\begin{Lemma}\label{Lemma key pol irr over heselian or rank one}
	Assume that $(K,\nu)$ is either henselian or has rank one. Further, assume that $\nu K$ is cofinal in $\o K(x)$. Take a key polynomial $Q(x)$ for $\o$ over $K$. Then $Q(x)$ is irreducible over $K^h$.
\end{Lemma}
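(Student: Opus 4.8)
The plan is to argue by contradiction: suppose $Q$ factors over $K^h$ as a product of irreducible polynomials $Q = Q_1 \cdots Q_k$ with $k \geq 2$ (the $Q_i$ need not be distinct a priori, but since $Q$ is separable or at least since key polynomials are irreducible over $K$, one expects them distinct; this should be addressed). Since $K^h$ is henselian, $\o$ extends uniquely from $K^h$ to $K^h(x)$ via its restriction, and more to the point, each root $a$ of $Q$ with $\overline{\o}(x-a) = \d(Q) = \g$ is a root of exactly one factor, say $Q_1$. The key observation to exploit is that $(a,\g)$ is a minimal pair of definition for $\o$ over $K$ by Corollary \ref{corollary to Novacoski} and the discussion preceding it, hence $[K(a):K] = \deg Q$; I want to derive that $\deg Q_1 < \deg Q$ forces a contradiction with minimality, but now working over $K^h$ rather than $K$.

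First I would set up the comparison between key polynomials over $K$ and over $K^h$. Because $\nu K$ is cofinal in $\o K(x)$ and $(K,\nu)$ is henselian or of rank one, the hypotheses of Theorem \ref{Thm min pair of definition with sep a}(ii) are in force, so we may as well assume $a$ is separable over $K$; then $a$ is separable over $K^h$ as well, and $Q_1$ is the minimal polynomial of $a$ over $K^h$. I would then show that $(a,\g)$ is still a \emph{pair} of definition for $\overline{\o}$ viewed over $K^h$ — this is immediate since $\overline{\o} = \overline{\nu}_{a,\g}$ regardless of the base field. By Corollary \ref{corollary to Novacoski} applied over $K^h$, the valuation $\overline{\o}|_{K^h(x)}$ equals $\nu_{Q_1}$ where now the $Q_1$-expansion is taken over $K^h$, and $Q_1$ is a key polynomial for $\overline{\o}|_{K^h(x)}$ over $K^h$ — provided $(a,\g)$ is minimal over $K^h$. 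So the crux is to rule out the existence of some $b \in \overline{K}$ with $\overline{\nu}(a-b) \geq \g$ and $[K^h(b):K^h] < [K^h(a):K^h] = \deg Q_1 < \deg Q = [K(a):K]$.

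The main obstacle is precisely this last step: transferring minimality from $K$ to $K^h$. The natural approach is to use that $\nu$ admits a unique extension to $K(a)$ when $(K,\nu)$ is henselian (trivially) — but in the rank-one case this need not hold, so one must be more careful. In the rank-one case I would invoke that $\nu K$ is cofinal in $\o K(x)$ together with the structure of pure extensions: since $K^h|K$ is immediate, $[K^h(b):K^h]$ divides $[K(b):K]$, and if $[K^h(b):K^h]$ were strictly smaller than $\deg Q$ one could, after replacing $b$ by a suitable $K$-conjugate lying in $K^h(b) \cdot (\text{something})$, or rather by descending along a tower, produce a genuine $K$-element contradicting the minimality of $(a,\g)$ over $K$. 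More cleanly, one can observe that $\d(Q_1)$ computed for $\overline{\o}$ equals $\g$ and $\deg Q_1 < \deg Q$; then $Q_1 \in K^h[x]$ but by approximating the coefficients of $Q_1$ by elements of $K$ (using henselization $=$ algebraic closure of $K$ in its completion in the rank-one case, or directly that $K^h|K$ is immediate and separable-algebraic) one manufactures a polynomial $\widetilde{Q} \in K[x]$ of degree $\deg Q_1$ with $\d(\widetilde{Q}) = \g$, contradicting that $Q$ is a key polynomial for $\o$ over $K$. I would expect the authors to phrase this via Krasner's Lemma (Lemma \ref{Lemma Krasner}) applied to the coefficients of $Q_1$, or via a direct density argument; either way the content is that key polynomials detect the minimal degree and that this minimal degree cannot drop upon passing to the henselization under the stated cofinality hypothesis. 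Once the contradiction $\deg Q_1 < \deg Q$ is excluded, $k = 1$ and $Q = Q_1$ is irreducible over $K^h$.
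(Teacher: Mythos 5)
Your closing sketch --- approximate the coefficients of the proper factor of $Q$ over $K^h$ by elements of $K$, using that in the rank-one case $K^h$ lies in the completion $\hat{K}$, so as to manufacture $\widetilde{Q}\in K[x]$ of degree $<\deg Q$ with $\d(\widetilde{Q})\geq\d(Q)$ --- is exactly the paper's argument, so the overall strategy is right. But the decisive step is left unexecuted: you must say how close the coefficients of $\widetilde{Q}$ need to be to those of the factor for its roots to move so little that $\d$ is preserved. This is precisely where the remaining hypothesis enters, and your write-up never locates it: cofinality of $\nu K$ in $\o K(x)$ (hence in $\overline{\o}\overline{K}(x)$) is needed to pick $\a\in\nu K$ with $\a>\max\{0,\d(Q)\}$; then ``continuity of roots'' yields a threshold $\b$ such that coefficientwise approximation within $\b$ forces $\overline{\nu}(a-a^\prime)>\a>\overline{\o}(x-a)$ for a suitable matching of roots, whence $\overline{\o}(x-a^\prime)=\overline{\o}(x-a)=\d(Q)$ and $\d(\widetilde{Q})\geq\d(Q)$, contradicting the key polynomial property since $\deg\widetilde{Q}<\deg Q$. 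Without cofinality no element of $\nu K$ need exceed $\d(Q)$ and no finite approximation suffices; invoking Krasner's Lemma is not a substitute here, since it controls membership of a root in a henselization rather than the value $\d$ of an approximating polynomial.

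The first two-thirds of your proposal is a detour containing an illegitimate move: you ``may as well assume $a$ is separable over $K$'' via Theorem \ref{Thm min pair of definition with sep a}(ii). That theorem lets one choose a different minimal pair, hence a different polynomial; here $Q$ is given and cannot be replaced, and $Q$ need not be the minimal polynomial of any minimal pair --- it is only assumed to be a key polynomial. The discussion of transferring minimality of $(a,\g)$ from $K$ to $K^h$ is likewise unnecessary: the contradiction is obtained directly against the defining property of a key polynomial ($\deg f<\deg Q\Longrightarrow\d(f)<\d(Q)$), not against minimality of a pair. Finally, the henselian case should be disposed of separately and trivially (a key polynomial is irreducible over $K=K^h$); the approximation argument only makes sense in the rank-one case, where $K^h\subseteq\hat{K}$.
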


\begin{proof}
	A key polynomial is irreducible by definition, hence the assertion is trivial when $(K,\nu)$ is henselian. We now consider that $(K,\nu)$ has rank one. Suppose that $Q$ is reducible over $K^h$. Consider a decomposition $Q = fg$ where $f,g \in K^h[x]$ and $\deg f, \deg g \geq 1$. Take a maximal root $a$ of $Q$. Without any loss of generality, we can assume that $f(a)=0$. It follows that
	\[ \d(f)=\d(Q). \]
	The fact that $\nu K$ is cofinal in $\o K(x)$ implies that $\nu K$ is cofinal in $\overline{\o}\overline{K}(x)$. Hence there exists $\a\in\nu K$ such that 
	\[ \a>\max\{0,\d(Q) \}. \]
	Write $f(x)$ as 
	\[ f(x) = \sum_{i=1}^{m} b_i x^i = (x-a_1)\dotsc (x-a_m), \]
	where $b_i \in K^h$ and $a_i\in\overline{K}$. At this point, we invoke the theorem of \enquote{continuity of roots} [cf. \ref{Kuh book}]. Consider a polynomial $f^\prime (x)\in K^h[x]$ given by
	\[ f^\prime(x) = \sum_{i=1}^{m} b_i^\prime x^i = (x-a_1^\prime)\dotsc (x-a_m^\prime). \]
	It follows from the continuity of roots that there exists $\b\in\nu K$ such that $\overline{\nu}(a_i-a_i^\prime) > \a$ for all $i$ whenever $\overline{\nu}(b_i-b_i^\prime) > \b$ for all $i$. Now, the fact that $(K,\nu)$ has rank one implies that $K^h$ is a subfield of the completion $\hat{K}$. Hence we can choose $f^\prime(x)\in K[x]$ with $b_i^\prime\in K$ such that $\overline{\nu}(b_i-b_i^\prime) > \b$ for all $i$. It follows that $\overline{\nu}(a-a^\prime) > \a>\overline{\o}(x-a)$ and consequently $\overline{\o}(x-a) = \overline{\o}(x-a^\prime)$. We have thus obtained that 
	\[ f^\prime (x)\in K[x] \text{ and } \d(f^\prime)\geq \d(Q).  \] 
	However, $\deg f^\prime = \deg f < \deg Q$. The above observation now contradits the assumption that $Q$ is a key polynomial for $\o$ over $K$. It follows that $Q$ is irreducible over $K^h$.
\end{proof}

Let assumptions be as in Lemma \ref{Lemma key pol irr over heselian or rank one}. Recall from Theorem \ref{Thm min pair of definition with sep a} that we can always choose a minimal pair of definition $(a,\g)$ for $\o$ with $a$ separable over $K$. Take the minimal polynomial $Q(x)$ of $a$ over $K$. It follows from Corollary \ref{corollary to Novacoski} that $Q$ is a key polynomial for $\o$ over $K$. The preceding lemma then implies that $Q$ is irreducible over $K^h$, that is, $K(a)$ and $K^h$ are linearly disjoint over $K$. It follows from Lemma 2.1 of [\ref{Kuh max imm extns of valued fields}] that there is a unique extension of $\nu$ from $K$ to $K(a)$. The following result is then a direct consequence of Theorem \ref{Thm gamma and kras}:

\begin{Corollary}\label{Corollary IC when K henslian or rank one}
	Let assumptions be as in Lemma \ref{Lemma key pol irr over heselian or rank one}. Take a minimal pair of definition $(a,\g)$ for $\o$ such that $a$ is separable over $K$. Then 
	\[ IC(K(x)|K,\o) = K(a)^h \text{ if and only if } \g>\kras(a,K). \]
\end{Corollary}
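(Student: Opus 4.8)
The plan is to derive the equivalence directly from the three cases of Theorem \ref{Thm gamma and kras}. First I would record the facts already assembled in the paragraph preceding the corollary: letting $Q(x)$ be the minimal polynomial of $a$ over $K$, Corollary \ref{corollary to Novacoski} shows that $Q$ is a key polynomial for $\o$ over $K$, Lemma \ref{Lemma key pol irr over heselian or rank one} shows that $Q$ is irreducible over $K^h$, so $K(a)$ and $K^h$ are linearly disjoint over $K$, and hence $\nu$ admits a unique extension from $K$ to $K(a)$. I would also note that we may assume $a\notin K$: if $a\in K$ then $K(a)^h=K^h$ and Lemma \ref{Lemma K(a,x)|K(a) is weakly pure} gives $IC(K(x)|K,\o)=K^h=K(a)^h$ while $\kras(a,K)$ is not even defined, so the statement is degenerate in that case.

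For the implication $\g>\kras(a,K)\Rightarrow IC(K(x)|K,\o)=K(a)^h$ I would simply quote Theorem \ref{Thm gamma and kras}(i), whose hypotheses — the setup of Theorem \ref{Thm IC and min fields}, separability of $a$ over $K$, and $\g>\kras(a,K)$ — all hold in the present situation.

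For the converse I would argue by contraposition: assuming $\g\leq\kras(a,K)$, I aim to show $IC(K(x)|K,\o)\subsetneq K(a)^h$, which in particular gives $IC(K(x)|K,\o)\neq K(a)^h$. Since $\o$ is valuation transcendental, there are two subcases. If $\o$ is residue transcendental, Theorem \ref{Thm gamma and kras}(iii) applies directly, its hypotheses being separability of $a$, the uniqueness of the extension of $\nu$ to $K(a)$ recorded above, and $\g\leq\kras(a,K)$. If $\o$ is value transcendental, I would first observe that $\kras(a,K)$ lies in $\overline{\nu}\overline{K}$ — it equals $\overline{\nu}(\s a-\tau a)$ for conjugates $\s a,\tau a\in\overline{K}$ — whereas value transcendence of $\o$ forces $\g\notin\overline{\nu}\overline{K}$, so $\g\neq\kras(a,K)$ and the weak inequality $\g\leq\kras(a,K)$ sharpens to $\g<\kras(a,K)$; Theorem \ref{Thm gamma and kras}(ii) then applies. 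In both subcases $IC(K(x)|K,\o)\subsetneq K(a)^h$, which completes the contrapositive and hence the proof.

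The argument is essentially bookkeeping across the three parts of Theorem \ref{Thm gamma and kras}, and I anticipate no real obstacle. The only non-formal point is the observation, in the value transcendental subcase, that $\g$ and $\kras(a,K)$ cannot be equal, since that is precisely what lets us pass from the weak inequality available to us to the strict inequality required to invoke Theorem \ref{Thm gamma and kras}(ii); this is the step I would write out most carefully.
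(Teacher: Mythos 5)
Your proof is correct and takes essentially the same route as the paper, which establishes the unique extension of $\nu$ from $K$ to $K(a)$ via the irreducibility of the key polynomial over $K^h$ and then treats the corollary as a direct consequence of the three cases of Theorem \ref{Thm gamma and kras}. Your explicit observation that in the value transcendental case $\g\neq\kras(a,K)$ (since $\kras(a,K)\in\overline{\nu}\overline{K}$ while $\g\notin\overline{\nu}\overline{K}$), so that the weak inequality sharpens to the strict one required by Theorem \ref{Thm gamma and kras}$(ii)$, is a detail the paper leaves implicit rather than a different argument.
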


%------------------------------------------------------------

\section{Other results}\label{section other results}

\begin{Proposition}\label{Proposition IC = K^h}
	Take a minimal field of definition $K(a)$ for $\o$. Assume that $a$ is purely inseparable over $K$. Then $IC(K(x)|K,\o) = K^h$.
\end{Proposition}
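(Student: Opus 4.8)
The plan is to trap $IC(K(x)|K,\o)$ between two copies of $K^h$. The inclusion $K^h\subseteq IC(K(x)|K,\o)$ requires nothing special about $a$: the implicit constant field is henselian and contains $K$, it is separable-algebraic over $K$ and hence lies in $K^\sep$, and $K^h=K^d$ is the smallest henselian subfield of $K^\sep$ containing $K$; therefore $K^h\subseteq IC(K(x)|K,\o)$. So everything comes down to the reverse inclusion $IC(K(x)|K,\o)\subseteq K^h$.

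For the reverse inclusion I would combine two facts already recorded. First, by Theorem \ref{Thm IC and min fields} (applied in either the residue transcendental or the value transcendental case) we have $IC(K(x)|K,\o)\subseteq K(a)^h$. Second, as noted at the start of Section \ref{section proof of thm 1}, $IC(K(x)|K,\o)$ is a \emph{separable}-algebraic extension of $K$. It therefore suffices to prove that the relative separable closure of $K$ in $K(a)^h$ is exactly $K^h$, i.e. $(K(a)^h|K)^\sep=K^h$.

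This last point is purely field-theoretic. Using the identity $L^h=L\cdot K^h$ for algebraic $L|K$ from Section \ref{section preliminiaries}, we get $K(a)^h=K(a)\cdot K^h=K^h(a)$. Since $a$ is purely inseparable over $K$ it is purely inseparable over $K^h$, so $K(a)^h|K^h$ is a purely inseparable extension. Now take any $c\in IC(K(x)|K,\o)$. Then $c$ is separable over $K$, hence separable over $K^h$; but $c\in K(a)^h$, which is purely inseparable over $K^h$, so $c$ is also purely inseparable over $K^h$; consequently $c\in K^h$. Hence $IC(K(x)|K,\o)\subseteq K^h$, and together with the first inclusion this yields $IC(K(x)|K,\o)=K^h$.

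I do not expect a genuine obstacle here; the argument is short once Theorem \ref{Thm IC and min fields} is available. The only points deserving a line of care are the identification $K(a)^h=K^h(a)$ together with the compatibility of all the henselizations inside the fixed algebraic closure $\overline{K(x)}$ (so that the displayed inclusions are literal and not merely up to isomorphism), and the degenerate case $a\in K$ — forced in particular when $\ch K=0$ — in which $K(a)=K$ and the statement is immediate.
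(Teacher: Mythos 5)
Your argument is correct and is essentially the paper's own: both proofs trap $IC(K(x)|K,\o)$ inside $K(a)^h=K^h(a)$, observe that this is a purely inseparable extension of $K^h$ while the implicit constant field is separable-algebraic, and conclude equality with $K^h$. The only cosmetic difference is that the paper derives the upper bound by passing to the base field $K^h$ and invoking Lemma \ref{Lemma K(a,x)|K(a) is weakly pure} directly (identifying $IC(K(x)|K,\o)$ with $IC(K^h(x)|K^h,\overline{\o})$ at the end), whereas you cite the inclusion $IC(K(x)|K,\o)\subseteq K(a)^h$ from Theorem \ref{Thm IC and min fields} and get the lower bound from henselianity of the implicit constant field; both routes are valid.
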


\begin{proof}
	Take $\g\in\overline{\o}\overline{K}(x)$ such that $(a,\g)$ is a pair of definition for $\o$. Then $(a,\g)$ is also a pair of definition for $\overline{\o}$ over $K^h$. It follows from Lemma \ref{Lemma K(a,x)|K(a) is weakly pure} that $(K^h(a,x)|K^h(a),\overline{\o})$ is a weakly pure extension and that $IC(K^h(a,x)|K^h(a),\overline{\o}) = K^h(a)$. As a consequence, $IC(K^h(x)|K^h,\overline{\o})\subseteq K^h(a)$. Now, $a$ being purely inseparable over $K$ is again purely inseparable over $K^h$. Consequently, $K^h(a)$ is a purely inseparable extension of $K^h$. On the other hand, $IC(K^h(x)|K^h,\overline{\o})$ is a separable-algebraic extension of $K^h$. Thus $IC(K^h(x)|K^h,\overline{\o}) \sect K^h(a) = K^h$. The fact that $IC(K^h(x)|K^h,\overline{\o})\subseteq K^h(a)$ then implies that $IC(K^h(x)|K^h,\overline{\o}) = K^h$. We further observe that $K^h(x) \subset K(x)^h \subseteq (K^h(x))^h$. The fact that $K(x)^h$ is henselian then implies that $K(x)^h = (K^h(x))^h$. We thus have the relations: 
	\[ IC(K(x)|K,\o) = \overline{K}\sect K(x)^h = \overline{K}\sect (K^h(x))^h = IC(K^h(x)|K^h,\overline{\o}) = K^h. \]
\end{proof}

Purely inseparable extensions are examples of purely wild extensions. In Example \ref{example pur wild IC = K(a)} we will illustrate that the conclusion of Proposition \ref{Proposition IC = K^h} may fail to hold when $K(a)|K$ is not purely inseparable, even when $(K(a)|K,\overline{\nu})$ is a purely wild extension. Further, the converse to Proposition \ref{Proposition IC = K^h} is not true. Namely, one can find a valued field $(K,\nu)$ with a valuation transcendental extension $\o$ to $K(x)$ such that $IC(K(x)|K,\o) = K^h$ and $\o$ admits a separable-algebraic minimal field of definition which is a proper extension of $K$. An example illustrating this fact is provided in Example \ref{example pur wild IC =K}.

The next result shows that we can provide a complete solution to Question \ref{question} whenever $\o$ is a value transcendental extension with a unique pair of definition.

\begin{Theorem}\label{Thm gamma > vK}
	Take a minimal pair of definition $(a,\g)$ for $\o$. Assume that $\g>\overline{\nu}\overline{K}$. Then
	\[ IC(K(x)|K,\o) = (K(a)^h|K)^\sep. \]
\end{Theorem}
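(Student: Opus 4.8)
I would split the equality $IC(K(x)|K,\o) = (K(a)^h|K)^\sep$ into its two inclusions. The inclusion ``$\subseteq$'' is immediate: Theorem~\ref{Thm IC and min fields} gives $IC(K(x)|K,\o)\subseteq K(a)^h$, and $IC(K(x)|K,\o)$ is separable over $K$, so it lies in the separable closure of $K$ inside $K(a)^h$. For ``$\supseteq$'', write $L:=(K(a)|K)^\sep$ and first record the field-theoretic identity $(K(a)^h|K)^\sep = L.K^h$: since $L|K$ and $K^h|K$ are separable, so is $L.K^h|K$, whence $L.K^h\subseteq (K(a)^h|K)^\sep$; conversely $K(a)^h = K(a).K^h = (L.K^h)(a)$ is purely inseparable over $L.K^h$ (as $a$ is purely inseparable over $L$, hence over $L.K^h$), and a purely inseparable extension of a field containing $K$ adds no new elements separable over $K$, so $(K(a)^h|K)^\sep\subseteq L.K^h$. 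Since $K^h\subseteq K(x)^h$ always, it therefore suffices to show that $L\subseteq K(x)^h$, for then $L.K^h\subseteq \overline K\sect K(x)^h = IC(K(x)|K,\o)$.

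To prove $L\subseteq K(x)^h$ I would apply Krasner's Lemma to a carefully chosen element. If $a$ is purely inseparable over $K$ then $L=K$ and there is nothing left to prove (this is Proposition~\ref{Proposition IC = K^h}, the case $a\in K$ being trivial since the implicit constant field is henselian); so assume $a$ is not purely inseparable over $K$. Let $q:=[K(a):L]$; since $K(a)|L$ is purely inseparable, $q=p^e$ is a power of $p=\ch K$ (with $q=1$ precisely when $a$ is separable over $K$), and $c:=a^q$ lies in $L\setminus K$. A Galois-theoretic computation shows $K(c)=L$: as $q$-th roots are unique in characteristic $p$, an element of $\Aut(\overline K|K)$ fixes $c$ iff it fixes $a$, so $K(c)=(K^\sep)^{\mathrm{Stab}(c)}=(K^\sep)^{\mathrm{Stab}(a)}\supseteq L$, and $c\in L$ forces equality. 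Now take $y:=x^q\in K(x)$. Using the Frobenius identity $x^q-a^q=(x-a)^q$ in characteristic $p$,
\[ \overline\o(c-y) \;=\; \overline\o\big((x-a)^q\big) \;=\; q\,\overline\o(x-a) \;=\; q\g . \]
Because $\g>\overline\nu\overline K$ we have $\g>0$, hence $q\g\geq\g>\kras(c,K)$, the last inequality holding since $\kras(c,K)\in\overline\nu\overline K$. As $c$ is separable over $K$, Lemma~\ref{Lemma Krasner} (applied to the extension $K(c)|K$ and the valued extension $(K(c,y)|K,\overline\o)$) yields $c\in K(y)^h=K(x^q)^h\subseteq K(x)^h$. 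Thus $c\in\overline K\sect K(x)^h=IC(K(x)|K,\o)$, and since $IC(K(x)|K,\o)$ is a field it contains $K(c)=L$, completing the argument.

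The step I expect to be the main obstacle is identifying which element to feed into Krasner's Lemma: using $a$ itself fails (it is inseparable, so the conclusion would be false), and trying to control the immediate extension $L(x)^h|K(x)^h$ by defect or ramification bookkeeping seems to stall. The decisive observation is that $c=a^q$ simultaneously generates the separable part $L$ of $K(a)|K$ and, through $x^q-a^q=(x-a)^q$, sits at the ``infinitely large'' distance $q\g$ from $x^q\in K(x)$, so that Krasner's Lemma applies directly.
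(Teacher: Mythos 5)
Your proof is correct, and it shares the paper's overall skeleton — the inclusion $\subseteq$ from Theorem \ref{Thm IC and min fields} together with separability of the implicit constant field, then Krasner's Lemma to pull a generator of the separable part into $K(x)^h$, with the required bound $>\kras$ coming in both cases from $\g>\overline{\nu}\overline{K}$ — but the Krasner witness is chosen genuinely differently. The paper works over $K^h$, writes $(K(a)^h|K^h)^\sep=K^h(f(a))$ for a primitive element with $f\in K^h[x]$, and must then compute $\overline{\o}(f(x)-f(a))$ by factoring $f(x)-f(a)$ into linear factors and using the uniqueness of the pair of definition to see that this value still exceeds $\overline{\nu}\overline{K}\geq\kras(f(a),K)$. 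You instead take the canonical generator $c=a^q$ of $L=(K(a)|K)^\sep$, where $q$ is the inseparability degree, paired with $y=x^q\in K(x)$, so the value computation collapses to the Frobenius identity $a^q-x^q=(a-x)^q$ and gives $\overline{\o}(c-y)=q\g>\kras(c,K)$ at once. This buys a shorter computation and avoids the primitive element theorem, at the cost of the small auxiliary identity $(K(a)^h|K)^\sep=L.K^h$ (which the paper sidesteps by working over $K^h$ from the start) and the verification $K(c)=L$; the latter also follows more directly than by your Galois argument, since $L|K(c)$ is separable and, being contained in $K(a)$ with $a^q=c\in K(c)$, also purely inseparable, hence trivial. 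Both proofs dispose of the purely inseparable case via Proposition \ref{Proposition IC = K^h}.
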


\begin{proof}
	We first assume that $a$ is purely inseparable over $K$. Then $K^h(a)|K^h$ is a purely inseparable extension. Consequently, $K^h$ is the separable closure of $K$ in $K(a)^h$. The assertion now follows from Proposition \ref{Proposition IC = K^h}. 
	
	We now assume that $a$ is not purely inseparable over $K$. The henselization being a separable extension implies that the separable closure of $K$ in $K(a)^h$ is the same as the separable closure of $K^h$ in $K(a)^h$. Now, the fact that $K(a)^h|K^h$ is a finite extension implies that $(K(a)^h|K^h)^\sep$ is a finite separable extension of $K^h$ and thus simple. Take $f(x)\in K^h[x]$ such that 
	\[ (K(a)^h|K^h)^\sep = K^h(f(a)). \]
	Write $f(x) = c_0+c_1 x+\dotsc + c_n x^n$ where $c_i \in K^h$. Then
	\[ f(x)-f(a) = c_1 (x-a)+\dotsc +c_n (x^n-a^n) = d(x-a)(x-a_2)\dotsc (x-a_n), \]
	where $d\in K^h$ and $a_i \in\overline{K}$. The fact that $\g>\overline{\nu}\overline{K}$ implies that $(a,\g)$ is the unique pair of definition for $\o$. It follows that $\overline{\o}(f(x)-f(a))=\g+\overline{\nu}c$ for some $c\in\overline{K}$. For the same reason we also have that $\overline{\o}(f(x)-f(a))>\overline{\nu}\overline{K}$. Consequently,
	\[ \overline{\o}(f(x)-f(a)) > \kras(f(a),K). \]
	We can then apply Lemma \ref{Lemma Krasner} to conclude that
	\[ f(a)\in K(f(x))^h. \]
    It follows that 
	\[  K^h(f(a)) \subseteq K(f(x))^h \subseteq K(x)^h. \]
	As a consequence, we have from (\ref{eqn IC and min fields}) that
	\[ K^h(f(a))\subseteq IC(K(x)|K,\o)\subseteq K(a)^h.  \]
	The facts that $K^h(f(a))$ is the separable closure of $K$ in $K(a)^h$ and $IC(K(x)|K,\o)$ is a separable-algebraic extension over $K$ now imply that
	\[ IC(K(x)|K,\o) = K^h(f(a)) = (K(a)^h|K)^\sep. \]
\end{proof}

Take a minimal field of definition $K(a)$ for $\o$ and assume that $a$ is not purely inseparable over $K$. Observe that $IC(K(x)|K,\o)$ is contained in the separable closure of $K$ in $K(a)^h$. When $(K,\nu)$ is henselian and $a$ is inseparable over $K$, then an example is provided in Example \ref{example insep IC} where the implicit constant field equals the separable closure of $K$ in $K(a)$, even without the restrictions of Theorem \ref{Thm gamma > vK}. In Example \ref{example sep IC proper nontrivial}, we will construct an example where $a$ is separable over the henselian field $(K,\nu)$, and the implicit constant field is a proper non-trivial subextension of $K(a)|K$.

%----------------------------------------------------------------------------------------------------------

\section{Examples}\label{section examples} 
 
 \begin{Lemma}\label{Lemma v a < gamma}
 	Take a pair of definition $(a,\g)$ for $\o$. Let $\overline{\nu} a$ has order $e$ modulo $\nu K$. Assume that $\overline{\nu} a<\g$ and $[K(a):K]=e$. Then $(a,\g)$ is a minimal pair of definition for $\o$. 
 \end{Lemma}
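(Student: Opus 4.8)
The plan is to verify the defining property of a minimal pair of definition directly from the hypotheses. So let $(a,\g)$ be as in the statement, fix an arbitrary $b\in\overline{K}$ with $\overline{\nu}(a-b)\geq\g$, and aim to prove $[K(a):K]\leq [K(b):K]$; since $(a,\g)$ is already assumed to be a pair of definition for $\o$, establishing this inequality for all such $b$ is exactly what is required.

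First I would use the hypothesis $\overline{\nu}a<\g$. Then $\overline{\nu}(a-b)\geq\g>\overline{\nu}a$, so writing $b=a-(a-b)$ and applying the triangle inequality gives $\overline{\nu}b=\overline{\nu}a$. In particular $\overline{\nu}b$ has the same order $e$ modulo $\nu K$ as $\overline{\nu}a$, i.e. $e$ is the least positive integer with $e\,\overline{\nu}b\in\nu K$.

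Next I would pass from this order to a divisibility statement about the ramification index. The class of $\overline{\nu}b$ generates a cyclic subgroup of order $e$ inside $\overline{\nu}K(b)/\nu K$. Since $[K(b):K]$ is finite, the fundamental inequality tells us that $(\overline{\nu}K(b):\nu K)$ is finite and that $(\overline{\nu}K(b):\nu K)\leq [K(b):K]$; Lagrange's theorem then forces $e\mid (\overline{\nu}K(b):\nu K)$. Chaining these, $e\leq (\overline{\nu}K(b):\nu K)\leq [K(b):K]$, and since $[K(a):K]=e$ by hypothesis we conclude $[K(a):K]\leq [K(b):K]$. As $b$ was arbitrary subject to $\overline{\nu}(a-b)\geq\g$, this shows $(a,\g)$ is a minimal pair of definition for $\o$.

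I do not expect a genuine obstacle: the argument is a short computation combining the triangle inequality with the fundamental inequality, and it does not even use that $(b,\g)$ is itself a pair of definition, only that $\overline{\nu}(a-b)\geq\g$. The single point worth stating with care is the passage from "the value $\overline{\nu}b$ has order $e$ modulo $\nu K$" to "$e$ divides the ramification index $(\overline{\nu}K(b):\nu K)$", which is immediate once one records that this index is finite because $[K(b):K]$ is.
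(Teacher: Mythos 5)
Your proof is correct and follows essentially the same route as the paper: use $\overline{\nu}(a-b)\geq\g>\overline{\nu}a$ and the triangle inequality to get $\overline{\nu}b=\overline{\nu}a$, then invoke the fundamental inequality to conclude $[K(b):K]\geq e=[K(a):K]$. The extra care you take in spelling out why $e$ divides $(\overline{\nu}K(b):\nu K)$ is a reasonable elaboration of the step the paper compresses into ``it now follows from the fundamental inequality.''
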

 
 \begin{proof}
 	Take another pair of definition $(a^\prime,\g)$ for $\o$. Then $\overline{\nu}(a-a^\prime)\geq\g>\overline{\nu}a$ and hence $\overline{\nu}a=\overline{\nu}a^\prime$. Thus $\overline{\nu}a^\prime$ has order $e$ modulo $\nu K$. It now follows from the fundamental inequality that $[K(a^\prime):K]\geq e$. Thus $(a,\g)$ is a minimal pair of definition for $\o$.
 \end{proof}

\begin{Example}\label{example pur wild IC = K(a)}
	Let $(K,\nu)$ be the valued field $\QQ$ equipped with the $p$-adic valuation, where $p$ is a non-zero prime. Then $\nu K=\ZZ$ and $K\nu=\FF_p$. Fix an extension of $\nu$ to $\overline{K}$ and denote it again by $\nu$. Take $a\in\overline{K}$ such that $a^p=\frac{1}{p}$. Hence $\nu a = -\frac{1}{p}$ and thus $\nu a$ has order $p$ modulo $\nu K$. It follows that $[K(a):K]=p=(\nu K(a):\nu K)$. The conjuagates of $a$ are $\{ \zeta^i a \}_{i=0}^{p-1}$ where $\zeta$ is a primitive $p$-th root of unity. It is well known that $\nu(1-\zeta^i) = \frac{1}{p-1}$ for all $i\in\{ 1,\dotsc,p-1 \}$. Thus 
	\[ \kras(a,K) = \frac{1}{p-1} - \frac{1}{p}. \]
	Take a real number $\g\in\RR$ such that $\g>\kras(a,K)$. Hence $\g>\nu a$. Take the extension $\overline{\o}$ of $\nu$ to $\overline{K}(x)$ given by $\overline{\o}:= \nu_{a,\g}$. Then $\o:= \overline{\o}|_{K(x)}$ is a valuation transcendental extension of $\nu$ to $K(x)$ with $(a,\g)$ as a pair of definition. In light of Lemma \ref{Lemma v a < gamma} we have that $(a,\g)$ is a minimal pair of definition for $\o$. It now follows from Theorem \ref{Thm gamma and kras}$(i)$ that $K(a)^h=IC(K(x)|K,\o)$.
	
	We can also start with the henselization of $(K,\nu)$ without any change in conclusions. The fact that $[K(a):K] = p = (\nu K(a):\nu K)$ implies that there is a unique extension of $\nu$ from $K$ to $K(a)$. In light of Lemma 2.1 of [\ref{Kuh max imm extns of valued fields}] we conclude that $(K^h(a)|K^h,\nu)$ is a purely wild extension of degree $p$ and $\kras(a,K^h) = \kras(a,K)$. Further, it follows from Lemma \ref{Lemma v a < gamma} that $(a,\g)$ is a minimal pair of definition for $\overline{\o}$ over $K^h$. In light of Theorem \ref{Thm gamma and kras}$(i)$, we finally obtain that $IC(K^h(x)|K^h,\overline{\o}) = K^h(a)$.    
\end{Example}

\begin{Example}\label{example pur wild IC =K}
	Take $K,\nu,p$ and $a$ as in Example \ref{example pur wild IC = K(a)}. Observe that $\nu a<\kras(a,K)$. Take a real number $\g$ such that $\nu a<\g\leq\kras(a,K)$. Consider the extension $\overline{\o}:= \overline{\nu}_{a,\g}$ and set $\o:= \overline{\o}|_{K(x)}$. Then $\o$ is a valuation transcendental extension of $\nu$ to $K(x)$. $\o$ is value transcendental when $\g$ is irrational, and $\o$ is residue transcendental otherwise. It follows from Lemma \ref{Lemma v a < gamma} that $(a,\g)$ is a minimal pair of definition for $\o$. We can then conclude from Theorem \ref{Thm gamma and kras} that $IC(K(x)|K,\o) = K^h$.
\end{Example}

\begin{Example}\label{example sep IC proper nontrivial}
	Take an odd prime $p$. Consider the valued field $\FF_p(t)$ equipped with the $t$-adic valuation $\nu=\nu_t$. Fix an extension of $\nu$ to $\overline{\FF_p (t)}$ and denote it again by $\nu$. Set $K:= \FF_p(t)^h$. Then $\nu K=\ZZ$ and $K\nu=\FF_p$.
	\newline Consider the polynomials $f(x):= x^p-x-\frac{1}{t}$ and $g(x):= x^2-\frac{1}{t}$. Take $a_1, a_2 \in\overline{K}$ such that $f(a_1)=0=g(a_2)$. Observe that $\nu a_1 = -\frac{1}{p}$ and $\nu a_2 = -\frac{1}{2}$. The following facts are now evident in light of the fundamental inequality:
	\begin{align*}
		[K(a_1):K]=p = (\nu K(a_1):\nu K) &\text{ and } \nu K(a_1) = \frac{1}{p}\ZZ.\\
		[K(a_2):K]=2 = (\nu K(a_2):\nu K) &\text{ and } \nu K(a_2) = \frac{1}{2}\ZZ.
	\end{align*}
	Thus $K(a_1)$ and $K(a_2)$ are linearly disjoint over $K$ and hence $[K(a_1,a_2):K] =2p$. Further, the fact that $\nu K(a_1)$ and $\nu K(a_2)$ are subgroups of $\nu K(a_1,a_2)$ implies that $\frac{1}{p}\ZZ\dirsum\frac{1}{2}\ZZ \subseteq \nu K(a_1,a_2)$. As a consequence, we obtain that
	\[ [K(a_1,a_2):K] = 2p = (\nu K(a_1,a_2):\nu K) \text{ and } \nu K(a_1,a_2) = \frac{1}{2p}\ZZ. \]
	Take $a\in K(a_1,a_2)$ such that $\nu a = \frac{1}{2p}$. It then follows from the fundamental inequality that $2p\leq (\nu K(a):\nu K) \leq [K(a):K]\leq [K(a_1,a_2):K]=2p$. Hence,
	\[ K(a)=K(a_1,a_2) \text{ and } [K(a):K]=2p=(\nu K(a):\nu K). \]
	We observe that $a_1$ and $a_2$ are both separable over $K$. Thus $K(a_1,a_2)$ is a separable extension of $K$ and hence $a$ is separable over $K$. Suppose that $\nu a =\kras (a,K)$. It then follows from Proposition 5.10 of [\ref{Kuh value groups residue fields rational fn fields}] that $(K(a)|K,\nu)$ is a tame extension. However, this is not possible since $p$ divides $(\nu K(a):\nu K)$. It follows that
	\[ \nu a < \kras(a,K). \]
	Take an irrational number $\g\in\RR$ such that
	\[ \nu a<\g<\kras(a,K). \]
	Consider the valuation $\overline{\o}$ on $\overline{K}(x)$ given by $\overline{\o}:= \nu_{a,\g}$ and set $\o:=\overline{\o}|_{K(x)}$. Then $(a,\g)$ is a pair of definition for $\o$. We observe from Lemma \ref{Lemma v a < gamma} that $(a,\g)$ is a minimal pair of definition for $\o$. It follows from Theorem \ref{Thm gamma and kras}$(ii)$ that $IC(K(x)|K,\o) \subsetneq K(a)$. Hence, 
	\[ [IC(K(x)|K,\o):K]<[K(a):K]=2p. \]
	Now, the fact that $[K(a_2):K]=2 = (\nu K(a_2):\nu K)$ implies that $(K(a_2)|K,\nu)$ is a tame extension. It follows from Theorem \ref{Thm IC and min fields} that $K(a_2)\subseteq IC(K(x)|K,\o)$. Thus $2$ divides $[IC(K(x)|K,\o):K]$. The fact that $[IC(K(x)|K,\o):K]$ divides $2p$, coupled with the previous observations, now imply that $[IC(K(x)|K,\o):K] =2$. It thus follows that
	\[ IC(K(x)|K,\o) = K(a_2). \]
\end{Example}

\begin{Example}\label{example insep IC}
	Take a field $k$ with $\ch k=p>0$ and let $u,v$ be indeterminates over $k$. Define the map $\nu: k[u,v] \longrightarrow (\ZZ\dirsum\ZZ)_{\text{lex}}$ given by
	\[ \nu(\sum c_{i,j} u^i v^j ) := \min\{ (i,j) \mid c_{i,j}\neq 0 \}, \]
	and extend $\nu$ canonically to $k(u,v)$. Then $\nu$ is valuation on $k(u,v)$ with $\nu k(u,v) = (\ZZ\dirsum\ZZ)_{\text{lex}}$. Fix an extension of $\nu$ to $\overline{k(u,v)}$ and denote it again by $\nu$. Set $K:= k(u,v)^h$. Consider the polynomials
	\begin{align*}
		f(x) &:= x^{p^2} + ux^p + v,\\
		g(x) &:= x^p + ux +v.
	\end{align*}
	Take $a\in\overline{K}$ such that $f(a)=0$. It follows from the triangle inequality that at least two monomials in the expression $f(a) = a^{p^2} + ua^p + v$ have the same value. Suppose that $\nu a^{p^2} = \nu (ua^p)$. Then $\nu a = (\frac{1}{p^2-p},0)$ and consequently, $\nu a^{p^2} = \nu(ua^p) > \nu v$. It then follows from the triangle inequality that $\nu v = \nu f(a)$ which contradicts the fact that $f(a)=0$. Now suppose that $\nu(ua^p)=\nu v$. Then $\nu a= (\frac{-1}{p},\frac{1}{p})$ and consequently, $\nu a^{p^2} = (-p,p) < \nu v= \nu(ua^p)$. Thus $\nu f(a) = (-p,p)$ which again yields a contradiction. It then follows that $\nu a^{p^2} = \nu v$, that is, $\nu a = (0,\frac{1}{p^2})$. Hence $\nu a$ has order $p^2$ modulo $\nu K$. It now follows from the fundamental inequality that $(\nu K(a):\nu K) = p^2 = [K(a):K]$. As a consequence, we obtain that $f(x)$ is irreducible over $K$. Similar arguments also yield that $g(x)$ is irreducible over $K$.
	\newline Now, $K(a)|K$ is an inseparable extension of degree $p^2$. Further, $g(a^p)=0$ and thus $K(a^p)|K$ is a separable extension of degree $p$. It follows that $K(a^p)$ is the separable closure of $K$ in $K(a)$.
	
	\pars Consider the polynomial 
	\[ h(x) := x^p + ux.  \]
	Take a root $t$ of $h(x)$. Then,
	\[ t^p = -ut \Longrightarrow (p-1)\nu t = \nu u\Longrightarrow \nu t = (\frac{1}{p-1},0). \]
	 Observe that any root of $g(x)$ is of the form $b^p$ for some root $b$ of $f$. Take two distinct roots $b_1$ and $b_2$ of $f$. Then $h(b_1^p-b_2^p) =0$ and hence $\nu(b_1^p-b_2^p) = (\frac{1}{p-1},0)$. It follows that
	 \[ \kras(a^p,K) = (\frac{1}{p-1},0). \] 
	Choose an irrational number $r\in\RR$ such that $r>\frac{1}{p-1}$. Set $\g:= (r,0)$. Then
	\[ \g> \kras(a^p,K) \]
	under the natural extension of the ordering on $\QQ\dirsum\QQ$ to $\RR\dirsum\QQ$. Take the valuation $\overline{\o}:= \nu_{a,\g}$ and set $\o:= \overline{\o}|_{K(x)}$. The fact that $\g\notin\QQ\dirsum\QQ$ implies that $\o$ is a value transcendental extension of $\nu$ to $K(x)$ with $(a,\g)$ as a pair of definition. Observe that $\g = (r,0)>(0,\frac{1}{p^2}) =\nu a$. It now follows from Lemma \ref{Lemma v a < gamma} that 
	\[ (a,\g) \text{ is a minimal pair of definition for }\o. \]
	It follows from (\ref{eqn IC and min fields}) that $IC(K(x)|K,\o) \subseteq K(a)$. Now, $IC(K(x)|K,\o)$ is a separable-algebraic extension of $K$ while $K(a)|K$ is inseparable. Hence $IC(K(x)|K,\o)\neq K(a)$. We further observe that
	\[ \overline{\o}(x^p-a^p) =p\overline{\o}(x-a)=p\g>\g>\kras(a^p,K). \]
	We can thus apply Lemma \ref{Lemma Krasner} to obtain that $a^p \in K(x^p)^h \subset K(x)^h$. It follows that 
	\[ K\subsetneq K(a^p) \subseteq IC(K(x)|K,\o)\subsetneq K(a). \]
	The fact that $K(a^p)$ is the separable closure of $K$ in $K(a)$ then implies that $K(a^p)=IC(K(x)|K,\o)$.
\end{Example}

The following result is proved in Proposition 3.14 of [\ref{Kuh value groups residue fields rational fn fields}]: Let $(K(a)|K,\overline{\nu})$ be a separable algebraic extension of valued fields and let $\G:= \overline{\nu} K(a)\dirsum\ZZ$ be an abelian group endowed with any extension of the ordering of $\overline{\nu} K(a)$. Then there exists an extension $\overline{\o}$ of $\overline{\nu}$ to $\overline{K(x)}$ such that 
\begin{equation}\label{eqn conditions}
	\o K(x) = \G, \, K(x)\o = K(a)\overline{\nu} \text{ and } IC(K(x)|K,\o) = K(a)^h,
\end{equation}
where $\o = \overline{\o}|_{K(x)}$. Observe that $\o$ is a value transcendental extension of $\nu$ to $K(x)$. It is natural to inquire whether any extension $\o$ satisfying the conditions (\ref{eqn conditions}) has $K(a)$ as a minimal field of definition. The following example illustrates that this is not necessarily true. The first part of the construction follows an example due to F. K. Schmidt [cf. Example 3.1, \ref{Kuh defect}].

\begin{Example}
	Take a non-zero prime number $p$. Consider the field $\FF_p((t))$ equipped with the $t$-adic valuation $\nu=\nu_t$. Extend $\nu$ to $\overline{\FF_p((t))}$ and denote the extension also by $\nu$. Observe that $(\FF_p((t))|\FF_p(t),\nu)$ is an immediate extension with $\nu\FF_p(t)=\ZZ$. Further, the field extension $\FF_p((t))|\FF_p(t)$ has infinite transcendence degree. Choose $\zeta\in\FF_p((t))$ which is transcendental over $\FF_p(t)$. Then $(\FF_p(t,\zeta)|\FF_p(t,\zeta^p),\nu)$ is an immediate purely inseparable extension. Taking henselizations, we observe that $(\FF_p(t,\zeta)^h|\FF_p(t,\zeta^p)^h,\nu)$ is also an immediate purely inseparable extension. Set $K:= \FF_p(t,\zeta^p)^h$. Then $K(\zeta) = \FF_p(t,\zeta)^h$. Take the ordered abelian group $\ZZ\dirsum\QQ$ equipped with the lexicographic order and set $\g:= (1,0)$. Then $\g> \nu\overline{K}$. Take the extension $\overline{\o}:= \nu_{\zeta,\g}$ of $\nu$ to $\overline{K}(x)$ and fix an extension of $\overline{\o}$ to $\overline{K(x)}$. Set $\o:= \overline{\o}|_{K(x)}$. The fact that $\g>\nu\overline{K}$ implies that $(\zeta,\g)$ is the unique pair of definition for $\o$. Now, $\zeta$ is purely inseparable over $K$. We thus obtain from Proposition \ref{Proposition IC = K^h} that $IC(K(x)|K,\o)=K$. Further, it follows from Remark \ref{Remark value group res field} that $\o K(x) = \ZZ\o(x^p-\zeta^p) \dirsum\nu K(\zeta)$ and $K(x)\o = K(\zeta)\nu$. The fact that $(K(\zeta)|K,\nu)$ is an immediate extension implies that $\o K(x) = \ZZ\o (x^p-\zeta^p)\dirsum\nu K$ and $K(x)\o =K\nu$. Take any $a\in K$. Then the relations given in (\ref{eqn conditions}) are satisfied. However, the fact that $(\zeta,\g)$ is the unique pair of definition for $\o$ implies that $K(a)$ is not a minimal field of definition for $\o$.
\end{Example}

%----------------------------------------------------------------

\section{Connection with pseudo-Cauchy sequences}\label{section pseudo cauchy}
	
We now explore the relationship between valuation transcendental extensions and pseudo-Cauchy sequences. Recall that we are working under the assumtions that $\o$ is a valuation transcendental extension of $\nu$ to $K(x)$, $\overline{\nu}$ is an extension of $\nu$ to $\overline{K}$ and $\overline{\o}$ is a common extension of $\o$ and $\overline{\nu}$ to $\overline{K}(x)$. We first mention the following general result.

\begin{Lemma}\label{Lemma Kuh obskeypoly}
	Take a pseudo-Cauchy sequence $(z_\mu)_{\mu < \l}$ in $(K,\nu)$ without a limit in $K$. Then the following statements hold true:
	\sn (i) Assume that $f(x)\in K[x]$ is such that $(\nu f(z_\mu))_{\mu<\l}$ is ultimately monotonically increasing. Then at least one root of $f$ is a limit of $(z_\mu)_{\mu<\l}$.
	\n (ii) Assume that $c\in\overline{K}$ is a limit of $(z_\mu)_{\mu<\l}$. Take the minimal polynomial $g(x)$ of $c$ over $K$. Then $(\nu g(z_\mu))_{\mu<\l}$ is ultimately monotonically increasing. 
\end{Lemma}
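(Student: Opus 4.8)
The plan is to deduce both parts from Proposition \ref{Proposition Kaplansky} after factoring the polynomial in question into linear factors over $\overline{K}$ and tracking the values of those factors along the sequence. Two preliminary observations make this legitimate. First, $(z_\mu)_{\mu<\l}$ is again a pseudo-Cauchy sequence over $(\overline{K},\overline{\nu})$, and an element of $\overline{K}$ is a limit of it over $(\overline{K},\overline{\nu})$ exactly when the defining equality holds, since all the values involved are the $\nu(z_{\mu_1}-z_{\mu_2})$, which do not change under the extension. Second, if $c\in\overline{K}$ is a limit of $(z_\mu)_{\mu<\l}$, then $\overline{\nu}(z_\mu-c)=\nu(z_\mu-z_{\mu+1})$ for every $\mu$, and this is strictly increasing in $\mu$ by the defining property of a pseudo-Cauchy sequence.

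For (i), I would write $f(x)=c\prod_{i=1}^{n}(x-a_i)$ with $c\in K$ and $a_i\in\overline{K}$, so that $\nu f(z_\mu)=\nu c+\sum_{i=1}^{n}\overline{\nu}(z_\mu-a_i)$ for all sufficiently large $\mu$, and argue by contraposition: if no $a_i$ is a limit of $(z_\mu)_{\mu<\l}$, then Proposition \ref{Proposition Kaplansky}, applied to each extension $(K(a_i)|K,\overline{\nu})$, yields that each sequence $(\overline{\nu}(z_\mu-a_i))_{\mu<\l}$ is ultimately constant; summing, $(\nu f(z_\mu))_{\mu<\l}$ is ultimately constant, hence not ultimately monotonically increasing by Lemma 5 of [\ref{Kaplansky}]. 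Thus, under the hypothesis of (i), at least one root of $f$ is a limit of $(z_\mu)_{\mu<\l}$.

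For (ii), I would factor $g(x)=\prod_{i=1}^{d}(x-c_i)$ over $\overline{K}$ with $c_1=c$, so that $\nu g(z_\mu)=\sum_{i=1}^{d}\overline{\nu}(z_\mu-c_i)$. For each $i$, Proposition \ref{Proposition Kaplansky} gives that $(\overline{\nu}(z_\mu-c_i))_{\mu<\l}$ is either ultimately constant or, when $c_i$ happens to be a limit of $(z_\mu)_{\mu<\l}$, equal to $(\nu(z_\mu-z_{\mu+1}))_{\mu<\l}$ and hence strictly increasing. Since the summand $i=1$ is of the latter type, $(\nu g(z_\mu))_{\mu<\l}$ is a finite sum of sequences each of which is ultimately constant or strictly increasing, with at least one strictly increasing summand, and is therefore ultimately monotonically increasing.

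I do not expect a serious obstacle here: the only points requiring care are the bookkeeping ones of checking that Proposition \ref{Proposition Kaplansky} may be invoked over $\overline{K}$, which is covered by the transfer remarks above, and the degenerate possibility that $z_{\mu}=a_i$ (resp. $z_\mu=c_i$) for some index; the latter forces that factor to be in $K$ and is harmless, since the $z_\mu$ are pairwise distinct and one may pass to a cofinal tail on which all the linear factors take finite value.
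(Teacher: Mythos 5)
Your proposal is correct and follows essentially the same route as the paper: factor the polynomial into linear factors over $\overline{K}$, apply Proposition \ref{Proposition Kaplansky} to each root to split the factors into "limit" (strictly increasing value) and "non-limit" (ultimately constant value) types, and sum, arguing by contraposition for (i) and directly for (ii). The extra transfer and degeneracy remarks are fine but not needed, since Proposition \ref{Proposition Kaplansky} already applies directly to the extension $(K(a_i)|K,\overline{\nu})$ with the sequence remaining in $K$.
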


\begin{proof}
	Write $f(x) = b\prod_{i=1}^{n} (x-a_i)$ where $a_i\in\overline{K}$ and $b\in K$. Then $\nu f(z_\mu) = \nu b +\sum_{i=1}^{n} \overline{\nu}(z_\mu-a_i)$. If none of the $a_i$ is a limit of $(z_\mu)_{\mu<\l}$, then it follows from Proposition \ref{Proposition Kaplansky} that $(\overline{\nu}(a_i-z_\mu))_{\mu<\l}$ is ultimately constant for all $i$. It follows that $(\nu f(z_\mu))_{\mu<\l}$ is ultimately constant which is a contradiction. So at least one root of $f$ is a limit of $(z_\mu)_{\mu<\l}$.
	
	\pars We now assume that $c\in\overline{K}$ is a limit of $(z_\mu)_{\mu<\l}$. By definition, $(\overline{\nu}(c-z_\mu))_{\mu<\l}$ is a monotonically increasing sequence. Write $g(x) = \prod_{i=1}^{n} (x-c_i)$. Then $\nu g(z_\mu) = \sum_{i=1}^{n} \overline{\nu}(z_\mu-c_i)$. We observe from Proposition \ref{Proposition Kaplansky} that $(\overline{\nu}(c_i-z_\mu))_{\mu<\l}$ is monotonically increasing whenever $c_i$ is a limit of $(z_\mu)_{\mu<\l}$, and the sequence is ultimately constant otherwise. It follows that $(\nu g(z_\mu))_{\mu<\l}$ is ultimately monotonically increasing. 
\end{proof}

\begin{Theorem}
	Take a pseudo-Cauchy sequence $(z_\mu)_{\mu < \l}$ in $(K,\nu)$ without a limit in $K$. Assume that $x$ is a limit of $(z_\mu)_{\mu < \l}$. Then the following statements hold true:
	\sn (i) $(z_\mu)_{\mu < \l}$ is a pseudo-Cauchy sequence of algebraic type.
	\n (ii) Whenever $(a,\g)$ is a pair of definition for $\o$, we have that $a$ is a limit of $(z_\mu)_{\mu < \l}$.
	\n (iii) Let $Q(x)\in K[x]$ be a nonlinear key polynomial for $\o$ with a maximal root $b\in\overline{K}$. Then $b$ is also a limit of $(z_\mu)_{\mu < \l}$.
	\n (iv) Take an associated minimal polynomial $m(x) \in K[x]$ of $(z_\mu)_{\mu < \l}$. Then $m(x)$ is a key polynomial for $\o$.
	\pars Conversely, assume that $(a,\g)$ is a minimal pair of definition for $\o$ and that $a$ is a limit of $(z_\mu)_{\mu < \l}$. Then $x$ is also a limit of $(z_\mu)_{\mu < \l}$.
\end{Theorem}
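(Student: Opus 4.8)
The plan is to handle the four forward assertions and then the converse, after first recording the elementary behaviour of the relevant value sequences. Write $\g_\mu := \nu(z_\mu - z_{\mu+1})$; by the pseudo-Cauchy condition $(\g_\mu)_{\mu<\l}$ is strictly increasing, and since $\l$ is a limit ordinal it has no largest term. By definition, $x$ is a limit of $(z_\mu)$ exactly when $\overline{\o}(x-z_\mu)=\g_\mu$ for all $\mu$. Everything rests on one dichotomy: for $c\in\overline{K}$, Proposition \ref{Proposition Kaplansky} applied to $(z_\mu)$ over $\overline{K}$ gives that either $c$ is a limit of $(z_\mu)$, so $\overline{\nu}(c-z_\mu)=\g_\mu$ for all $\mu$ and hence $\overline{\o}(x-c)=\overline{\o}\big((x-z_\mu)-(c-z_\mu)\big)\geq\g_\mu$ for all $\mu$, which forces $\overline{\o}(x-c)>\g_\mu$ for \emph{every} $\mu$ (as $(\g_\mu)$ has no last term); or $\overline{\nu}(c-z_\mu)$ is ultimately equal to a constant $c^0$, necessarily with $c^0<\g_\mu$ for all large $\mu$ (else $\overline{\nu}(c-z_\mu)$ would equal the strictly increasing $\g_\mu$), and then $\overline{\o}(x-c)=c^0$. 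I will also use throughout that a pair of definition $(a,\g)$ for $\o$ means $\overline{\o}=\overline{\nu}_{a,\g}$, whence $\overline{\o}(x-b)=\min\{\g,\overline{\nu}(a-b)\}$ for every $b\in\overline{K}$.

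For (ii): if $(a,\g)$ is a pair of definition then $\g_\mu=\overline{\o}(x-z_\mu)=\min\{\g,\overline{\nu}(a-z_\mu)\}$ for all $\mu$; a strictly increasing sequence with no last term cannot take the value $\g$, so $\g_\mu<\g$ and hence $\overline{\nu}(a-z_\mu)=\g_\mu$ for all $\mu$, i.e. $a$ is a limit of $(z_\mu)$. Then (i) follows at once: since $\o$ is valuation transcendental it has a pair of definition $(a,\g)$; letting $Q$ be the minimal polynomial of $a$ over $K$, part (ii) says $a$ is a limit of $(z_\mu)$, so Lemma \ref{Lemma Kuh obskeypoly}(ii) shows $(\nu Q(z_\mu))_{\mu<\l}$ is ultimately monotonically increasing, and as $Q\in K[x]$ this says precisely that $(z_\mu)$ is of algebraic type.

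For (iv), let $m$ be an associated minimal polynomial. By Lemma \ref{Lemma Kuh obskeypoly}(i) some root $c$ of $m$ is a limit of $(z_\mu)$, so $\d(m)\geq\overline{\o}(x-c)>\g_\mu$ for every $\mu$. If $f\in K[x]$ is nonconstant with $\deg f<\deg m$, then by minimality of $\deg m$ together with Lemma 5 of [\ref{Kaplansky}] the sequence $(\nu f(z_\mu))$ is ultimately constant; writing $\nu f(z_\mu)$ as a sum of the terms $\overline{\nu}(a_j-z_\mu)$ over the roots $a_j$ of $f$, each of which (by the dichotomy of the first paragraph) is ultimately constant or ultimately strictly increasing, we conclude that every root of $f$ is a non-limit root. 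Hence each $\overline{\o}(x-a_j)$ is a constant lying below $\g_\mu$ for large $\mu$, and since $f$ has finitely many roots there is a single $\mu^*<\l$ with $\d(f)=\max_j\overline{\o}(x-a_j)<\g_{\mu^*}<\d(m)$; thus $m$ is a key polynomial. For (iii), let $Q$ be a nonlinear key polynomial for $\o$ with maximal root $b$. The key point — the step I expect to be the main obstacle — is that $Q$ must have a limit root: if all roots of $Q$ were non-limit, then exactly as above $\d(Q)=\max_j\overline{\o}(x-a_j)<\g_{\mu^*}$ for a suitable $\mu^*$, while the linear polynomial $x-z_{\mu^*}\in K[x]$ has $\d(x-z_{\mu^*})=\g_{\mu^*}>\d(Q)$ and degree $1<\deg Q$, contradicting that $Q$ is a key polynomial. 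Since non-limit roots contribute $\overline{\o}$-values below $\g_{\mu^*}$ and limit roots contribute values exceeding every $\g_\mu$, the maximum $\d(Q)=\overline{\o}(x-b)$ is attained at a limit root and satisfies $\overline{\o}(x-b)>\g_\mu=\overline{\o}(x-z_\mu)$ for all $\mu$; therefore $\overline{\nu}(b-z_\mu)=\overline{\o}\big((b-x)+(x-z_\mu)\big)=\g_\mu$ for all $\mu$, i.e. $b$ is a limit of $(z_\mu)$.

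Finally, for the converse, suppose $(a,\g)$ is a minimal pair of definition for $\o$ and $a$ is a limit of $(z_\mu)$, so $\overline{\nu}(a-z_\mu)=\g_\mu$ and $\overline{\o}(x-z_\mu)=\min\{\g,\g_\mu\}$. If $\g_{\mu_0}\geq\g$ for some $\mu_0$, then $\overline{\nu}(a-z_{\mu_0})=\g_{\mu_0}\geq\g$, so by Proposition 3 of [\ref{AP sur une classe}] the pair $(z_{\mu_0},\g)$ is again a pair of definition for $\o$; minimality of $(a,\g)$ then forces $[K(a):K]\leq[K(z_{\mu_0}):K]=1$, i.e. $a\in K$, contradicting the standing hypothesis that $(z_\mu)$ has no limit in $K$. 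Hence $\g_\mu<\g$ for all $\mu$, so $\overline{\o}(x-z_\mu)=\g_\mu$ for all $\mu$, that is, $x$ is a limit of $(z_\mu)$. The only genuinely delicate point in the whole argument is the forced existence of a limit root in (iii); the remaining steps are the routine bookkeeping above, the one subtlety being that $(\g_\mu)$ may be bounded in the value group, which is why "$<\g_\mu$ for large $\mu$" must each time be converted into "$<\g_{\mu^*}$ for a single well-chosen $\mu^*$" using finiteness of the root set.
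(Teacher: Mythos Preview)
Your proof is correct, and the converse as well as part (ii) match the paper's argument essentially verbatim. The interesting divergences are in (i), (iii), and (iv).

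For (i), the paper invokes Theorem 2 of [\ref{Kaplansky}]: a transcendental-type pseudo-Cauchy sequence with limit $x$ would force $(K(x)|K,\o)$ to be immediate, contradicting valuation transcendentality. You instead bootstrap from (ii): a pair of definition $(a,\g)$ exists, (ii) gives that $a$ is a limit (hence $a\notin K$), and Lemma \ref{Lemma Kuh obskeypoly}(ii) then supplies a polynomial over $K$ with ultimately increasing values. This avoids citing Kaplansky's structure theorem at the cost of depending on (ii); either route is short.

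For (iii) and (iv), the paper relies on both directions of Novacoski's Theorem 1.1 from [\ref{Novacoski key poly and min pairs}]: in (iii) it uses the implication $\overline{\nu}(b-c)\geq\d(Q)\Rightarrow[K(c):K]\geq[K(b):K]$ to rule out $\g_\mu\geq\d(Q)$, and in (iv) it verifies the same implication for a maximal root of $m$ and then appeals to Theorem 1.1 to conclude $m$ is key. You bypass Theorem 1.1 entirely and work straight from the definition of key polynomial, using your dichotomy on limit versus non-limit roots together with the linear witnesses $x-z_{\mu^*}$ to compare $\d$-values. Your approach is more self-contained and makes transparent why nonlinearity is needed in (iii); the paper's approach is terser but imports more machinery. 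One small point: your parenthetical justification ``else $\overline{\nu}(c-z_\mu)$ would equal the strictly increasing $\g_\mu$'' is slightly imprecise as written; the cleaner reason is that $c^0>\g_{\mu}$ for some $\mu\geq\mu_0$ would force $\overline{\nu}(c-z_{\mu+1})=\min\{c^0,\g_\mu\}=\g_\mu\neq c^0$, contradicting constancy. The conclusion you draw from it is correct.
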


\begin{proof}
	Set $\g_\mu := \nu(z_\mu - z_{\mu +1})$ for each $\mu < \l$. By definition, $(\g_\mu)_{\mu < \l}$ is a strictly increasing sequence. If $x$ is a limit of the pseudo-Cauchy sequence $(z_\mu)_{\mu < \l}$, then by definition $\o(x-z_\mu) = \g_\mu$ for each $\mu<\l$. If $(z_\mu)_{\mu < \l}$ is of transcendental type, then the extension $(K(x)|K,\o)$ is immediate in view of Theorem 2 of [\ref{Kaplansky}]. Hence $(z_\mu)_{\mu < \l}$ is of algebraic type. 
	
	\pars Let $(a,\g)$ be a pair of definition for $\o$. Then $\overline{\o}(x-a) = \g = \max \overline{\o}(x-\overline{K})$. The fact that $(\g_\mu)_{\mu < \l}$ is a monotonically increasing sequence implies that either $\g>\g_\mu$ for each $\mu<\l$, or ultimately $\g < \g_\mu$. Observe that $\g_\mu \in \o(x-K) \subseteq \overline{\o}(x-\overline{K})$ for each $\mu$. The maximality of $\g$ then implies that $\g>\g_\mu$ for each $\mu<\l$. From the triangle inequality it follows that $\overline{\nu}(a-z_\mu) = \g_\mu$ for each $\mu<\l$. Hence $a$ is a also a limit of $(z_\mu)_{\mu < \l}$.
	
	\pars Now let $Q(x)\in K[x]$ be a nonlinear key polynomial for $\o$ with a maximal root $b$, that is, $\d(Q) = \overline{\o}(x-b)$. It follows from Theorem 1.1 of [\ref{Novacoski key poly and min pairs}] that for any $c\in\overline{K}$,
	\[ \overline{\nu}(b-c) \geq \d(Q) \Longrightarrow [K(c):K]\geq [K(b):K]. \]
	Suppose that $\g_\mu \geq \d(Q)$ for some $\mu<\l$. Then, $\o(x-z_\mu) \geq \overline{\o}(x-b)$. Consequently, we have that $\overline{\nu} (b-z_\mu) \geq \overline{\o}(x-b) = \d(Q)$. It follows from our preceding discussions that $[K(z_\mu):K] \geq [K(b):K]\geq 2$ which is a contradiction. Thus $\g_\mu < \d(Q)$ for all $\mu<\l$. From the triangle inequality it follows that $\overline{\nu}(b-z_\mu) = \g_\mu$ for each $\mu<\l$ and hence $b$ is also a limit of $(z_\mu)_{\mu < \l}$.
	
	\pars Now take an associated minimal polynomial $m(x)\in K[x]$ of $(z_\mu)_{\mu < \l}$ with a maximal root $l\in\overline{K}$. The sequence $(\nu m(z_\mu))_{\mu<\l}$ is ultimately monotonically increasing. It follows from Lemma \ref{Lemma Kuh obskeypoly} that there exists some root $l^\prime$ of $m$ such that $l^\prime$ is a limit of $(z_\mu)_{\mu < \l}$. Thus, 
	\[ \d(m) = \overline{\o}(x-l) \geq \overline{\o}(x-l^\prime) > \g_\mu \text{ for all } \mu<\l  \]
	and hence $l$ is also a limit of $(z_\mu)_{\mu < \l}$. Take $c\in\overline{K}$ such that $\overline{\nu}(c-l)\geq \d(m)$. Then $c$ is also a limit of $(z_\mu)_{\mu < \l}$. Take the minimal polynomial $g(x)\in K[x]$ of $c$ over $K$. In light of Lemma \ref{Lemma Kuh obskeypoly} we have that $(\nu g(z_\mu))_{\mu<\l}$ is ultimately monotonically increasing and hence $\deg g \geq \deg m$. Thus, 
	\[ \overline{\nu}(c-l) \geq \d(m) \Longrightarrow [K(c):K] \geq [K(l):K]. \]
	It follows from Theorem 1.1 of [\ref{Novacoski key poly and min pairs}] that $m(x)$ is a key polynomial for $\o$.
	
 	\pars Conversely, assume that $(a,\g)$ is a minimal pair of definition for $\o$ and that $a$ is a limit of $(z_\mu)_{\mu < \l}$. Then $\overline{\nu}(a-z_\mu)=\g_\mu$ for each $\mu<\l$. If $\g\leq\g_\mu$ for some $\mu$, then $(z_\mu,\g)$ is also a pair of definition for $\o$. The fact that $z_\mu\in K$  and the minimality of $(a,\g)$ then implies that $a\in K$ which contradicts the fact that $(z_\mu)_{\mu < \l}$ does not admit any limits in $K$. Hence $\g>\g_\mu$ for each $\mu<\l$. It follows from the triangle inequality that $\overline{\nu}(x-z_\mu) = \g_\mu$ for each $\mu<\l$ and hence $x$ is also a limit of $(z_\mu)_{\mu < \l}$.
\end{proof}
 
We can construct a pseudo-Cauchy sequence $(z_\mu)_{\mu < \l}$ in $(K,\nu)$ with $x$ as a limit, such that for some $c\in\overline{K}$, $c$ is also a limit of the sequence $(z_\mu)_{\mu < \l}$ but the minimal polynomial of $c$ is not a key polynomial for $\o$. We illustrate this fact in the next example. We will require the following definition: take an extension of valued fields $(K(z)|K,\nu)$. The \textbf{distance} of $z$ from $K$, denoted by $\dist(z,K)$, is defined to be the least initial segment of $\overline{\nu}\overline{K}$ containing $\nu(z-K)\sect \nu K$. We write $\dist(z,K)=(\a)^-$ for some $\a\in\overline{\nu}\overline{K}$ to mean that $\dist(z,K) = \{ \th\in\overline{\nu}\overline{K}\mid \th < \a \}$.  	
	
\begin{Example}
	Take an odd prime $p$ and let $(k,\nu)$ be the valued field $(\FF_p(t),\nu_t)$ equipped with the $t$-adic valuation. Fix an extension of $\nu$ to $\overline{\FF_p (t)}$ which we again denote by $\nu$. Set $K$ to be the perfect hull of $k^h$. Hence $(K,\nu)$ is also henselian. Further, 
	\[ \nu K = \frac{1}{p^\infty}\ZZ \text{ and } K\nu = \FF_p. \]
	Take $a\in\overline{\FF_p(t)}$ such that $a^p - a  = \frac{1}{t}$. Then $(K(a)|K,\nu)$ is an immediate Artin-Schreier defect extension with $\dist(a,K) = (0)^-$ [Example 3.12, \ref{Kuh defect}]. It follows from [Theorem 1, \ref{Kaplansky}] that $a$ is a limit of a pseudo-Cauchy sequence $(z_\mu)_{\mu < \l}$ in $(K,\nu)$ which does not have any limits in $K$. Set $\g_\mu := \nu(z_\mu - z_{\mu+1})$ for all $\mu<\l$. Then, 
	\[ \nu(a-z_\mu) = \g_\mu<0 \text{ for all } \mu<\l. \]
	Choose an irrational number $\g$ such that $0<\g<\frac{1}{2}$. Take the value transcendental extension $\overline{\o}:= \nu_{a,\g}$ of $\nu$ to $\overline{K}(x)$ and set $\o:= \overline{\o}|_{K(x)}$. Then $\overline{\o}(x-a)=\g>\g_\mu$ for all $\mu<\l$ and hence $x$ is a limit of $(z_\mu)_{\mu < \l}$. Suppose that $(a,\g)$ is not a minimal pair of definition for $\o$. Then there exists $a^\prime\in\overline{K}$ such that $\overline{\nu}(a-a^\prime)>\g$ and $[K(a^\prime):K] < [K(a):K]=p$. Hence $a^\prime$ is a limit of $(z_\mu)_{\mu < \l}$. It further follows from Lemma 2.17 of [\ref{Kuh A-S extensions and defectless fields paper}] that $\dist(a,K) = (0)^- = \dist(a^\prime,K)$. Take the minimal polynomial $m_{a^\prime}(x)$ of $a^\prime$ over $K$. It follows from Lemma 2.22 of [\ref{Kuh A-S extensions and defectless fields paper}] that $\dist(m_{a^\prime}(a),K) = \dist(0,K) = \nu\overline{K}$. As a consequence we have $m_{a^\prime}(a)=0$. However this contradicts the assumption that $[K(a^\prime):K] < [K(a):K]$. Hence we have that $(a,\g)$ is a minimal pair of definition for $\o$. Consequently, $m_a(x)$ is a key polynomial for $\o$ [Corollary \ref{corollary to Novacoski}].
	
	\pars Take $b\in\overline{K}$ such that $b^2 = t$. Thus $[K(b):K]\leq 2$. Now, $\nu b = \frac{1}{2} \notin \nu K = \frac{1}{p^\infty}\ZZ$. It then follows from the fundamental inequality that $[K(b):K] = 2$. Thus $K(a)$ and $K(b)$ are linearly disjoint over $K$ and hence $[K(a,b):K] = 2p$. Set $c := a+b$. Then $\nu(c-a) = \nu b = \frac{1}{2} > \g>0$. Hence $(c, \g)$ is a pair of definition for $\o$ and $c$ is a limit of $(z_\mu)_{\mu<\l}$. Suppose that the corresponding minimal polynomial $m_c(x)$ is a key polynomial for $\o$. The minimality of $(a,\g)$ implies that $\deg m_c\geq \deg m_a$. Consequently, $\d(m_c)\geq \d(m_a)$. The fact that $(a,\g)$ and $(c,\g)$ are pairs of definition for $\o$ imply that $\d(m_c)=\g=\d(m_a)$. We thus have that $\deg m_c = p =\deg m_a$. Now $K(a)|K$ is an Artin-Schreier extension, hence Galois. Thus either $K(a)=K(c)$, or they are linearly disjoint over $K$. It follows that $[K(a,c):K]$ is either $p$ or $p^2$. The observations $K(a,c) = K(a,a+b) = K(a,b)$ and $[K(a,b):K] = 2p$ then imply that we have again arrived at a contradiction. Thus $m_c(x)$ is not a key polynomial of $\o$, while $c$ is a limit of $(z_\mu)_{\mu < \l}$.
\end{Example}

	\end{document}